\numberwithin{equation}{section}
\theoremstyle{plain}
\newtheorem*{theorem*}{Theorem}
\newtheorem{theorem}{Theorem}
\newtheorem{corollary}[theorem]{Corollary}
\newtheorem{lemma}[theorem]{Lemma}
\newtheorem{proposition}[theorem]{Proposition}
\theoremstyle{definition}
\newtheorem{definition}[theorem]{Definition}
\newtheorem{example}[theorem]{Example}
\theoremstyle{remark}
\newtheorem{remark}[theorem]{Remark}
\numberwithin{theorem}{section}
\newcommand{\Div}[1]{\mbox{\rm{Div}}(#1)}
\newcommand{\PDiv}[1]{\mbox{\rm{PDiv}}(#1)}
\newcommand{\HDiv}[1]{\mbox{\rm{HDiv}}(#1)}
\newcommand{\HPDiv}[1]{\mbox{\rm{HPDiv}}(#1)}
\newcommand{\Cl}[1]{\mbox{\rm{Cl}}(#1)}
\newcommand{\HCl}[1]{\mbox{\rm{HCl}}(#1)}
\newcommand{\Chow}[1]{\Cl{#1}}
\newcommand{\Pic}[1]{\mbox{\rm{Pic}}(#1)}
\newcommand{\HPic}[1]{\mbox{\rm{HPic}}(#1)}
\newcommand{\C}[1]{\mbox{\rm{C}}(#1)}
\newcommand{\HC}[1]{\mbox{\rm{HC}}(#1)}
\newcommand{\Spec}[1]{\operatorname{Spec}(#1)}
\newcommand{\ProjSch}[1]{\operatorname{Proj}(#1)}
\newcommand{\End}{\mbox{\rm{End\,}}}
\newcommand{\Ass}[1]{\mbox{\rm{Ass}} \,#1}
\newcommand{\Loc}{\mbox{\rm{Loc\,}}}
\newcommand{\Hom}[3]{\mbox{\rm{Hom}}_{#1}(#2,#3)}
\newcommand{\Ext}[4]{\mbox{\rm{Ext}}^{#1}_{#2}(#3,#4)}
\newcommand{\Tor}[4]{\mbox{\rm{Tor}}_{#1}^{#2}(#3,#4)}
\newcommand{\ModA}{\mbox{\rm{Mod-}A}}
\newcommand{\ModB}{\mbox{\rm{Mod-}B}}
\newcommand{\rk}[1]{\operatorname{rank}#1}
\newcommand{\htt}[1]{\operatorname{ht}#1}
\newcommand{\supp}[1]{\mbox{\rm{supp}} \,#1}  % 'small' support, computed using residue fields (= derived fibers)
\newcommand{\Ker}[1]{\mbox{\rm{Ker}}\,#1}
\newcommand{\Coker}{\mbox{\rm{Coker}}}
\newcommand{\Img}{\mbox{\rm{Im}}}
\newcommand{\di}{\mbox{\rm{div}}}
\newcommand{\m}{\mathfrak{m}}
\newcommand{\n}{\mathfrak{n}}
\newcommand{\p}{\mathfrak{p}}
\newcommand{\q}{\mathfrak{q}}
\newcommand{\Acal}{\ensuremath{\mathcal{A}}}
\newcommand{\Fcal}{\ensuremath{\mathcal{F}}}
\newcommand{\Dcal}{\ensuremath{\mathcal{D}}}
\newcommand{\Scal}{\ensuremath{\mathcal{S}}}
\newcommand{\Tcal}{\ensuremath{\mathcal{T}}}
\newcommand{\Lcal}{\ensuremath{\mathcal{L}}}
\newcommand{\Bcal}{\ensuremath{\mathcal{B}}}
\newcommand{\Ycal}{\ensuremath{\mathcal{Y}}}
\newcommand{\Xcal}{\ensuremath{\mathcal{X}}}
\newcommand{\Z}{\mathbb{Z}}
\newcommand{\la}{\longrightarrow}
\newcommand{\V}{V}
\newcommand{\PP}{\mathbb{P}}
\newcommand{\Proj}{\operatorname{Proj}}
\newcommand{\QQ}{\mathbb{Q}}
\begin{document}

\title{Flat ring epimorphisms and universal localisations of commutative rings}
\author{Lidia Angeleri H\"ugel, Frederik Marks, Jan {\v S}{\v{t}}ov{\'{\i}}{\v{c}}ek, Ryo Takahashi, Jorge Vit\'oria}
\address{Lidia Angeleri H\"ugel, Dipartimento di Informatica - Settore di Matematica, Universit\`a degli Studi di Verona, Strada le Grazie 15 - Ca' Vignal, I-37134 Verona, Italy} 
\email{lidia.angeleri@univr.it}
\address{Frederik Marks, Institut f\"ur Algebra und Zahlentheorie, Universit\"at Stuttgart, Pfaffenwaldring 57, 70569 Stuttgart, Germany}
\email{marks@mathematik.uni-stuttgart.de}
\address{Jan {\v S}{\v{t}}ov{\'{\i}}{\v{c}}ek, Charles University in Prague, Faculty of Mathematics and Physics, Department of Algebra, Sokolovsk\'a 83, 186 75 Praha, Czech Republic}
\email{stovicek@karlin.mff.cuni.cz}
\urladdr{http://www.karlin.mff.cuni.cz/~stovicek/}
\address{Ryo Takahashi, Graduate School of Mathematics, Nagoya University, Furocho, Chikusaku, Nagoya, Aichi 464--8602, Japan}
\email{takahashi@math.nagoya-u.ac.jp}
\urladdr{http://www.math.nagoya-u.ac.jp/~takahashi/}
\address{Jorge Vit\'oria, Department of Mathematics, City, University of London, Northampton Square, London EC1V 0HB, UK / Dipartimento di Matematica e Informatica, Universit\'a degli Studi di Cagliari, Palazzo delle Scienze, Via Ospedale, 72, 09124, Cagliari, Italy}
\email{jorge.vitoria@unica.it}
\urladdr{https://sites.google.com/view/jorgevitoria/}
\begin{abstract}
We study different types of localisations of a commutative noetherian ring. More precisely, we provide criteria to decide: (a) if a given flat ring epimorphism is a universal localisation in the sense of Cohn and Schofield; and (b) when such universal localisations are classical rings of fractions. In order to find such criteria, we use the  theory of support and we analyse the specialisation closed subset associated to a flat ring epimorphism. In case the underlying ring is locally factorial or of Krull dimension one, we show that all flat ring epimorphisms are universal localisations. Moreover, it turns out that an answer to the question of when universal localisations are classical depends on the structure of the Picard group. We furthermore discuss the case of normal rings, for which the divisor class group plays an essential role to decide if a given flat ring epimorphism is a universal localisation. Finally, we explore several (counter)examples which highlight the necessity of our assumptions.
\end{abstract}
\subjclass[2010]{{13B30,} 13C20, {13D45,} 16S90, 18E35}
\keywords{Flat ring epimorphism, universal localisation, specialisation closed subset, support, Picard group, divisor class group}
\thanks{The authors acknowledge support from the Program �Ricerca di Base 2015� of the University of Verona. Lidia Angeleri H\"ugel was partly supported by Istituto Nazionale di Alta Matematica INdAM-GNSAGA. Jan \v{S}\v{t}ov\'{\i}\v{c}ek acknowledges support from the grant GA~\v{C}R 17-23112S from the Czech Science Foundation. Ryo Takahashi was partly supported by JSPS Grants-in-Aid for Scientific Research 16H03923 and 16K05098. Jorge Vit\'oria acknowledges support from the Department of Computer Sciences of the University of Verona in the earlier part of this project, as well as from the Engineering and Physical Sciences Research Council of the United Kingdom, grant number EP/N016505/1, in the later part of this project.}
\date{\today}
\maketitle

\setcounter{tocdepth}{1}
\tableofcontents

%%%%%%%%%%%%%%%%%%%%%%%%%%%%%%%%%%%%%%%%%%%%%%%%%%%%%%%%%%%%%%%%%

\section{Introduction}
Flat epimorphisms of commutative noetherian rings is a very classical topic which was studied in the 1960's in the context of algebraic geometry (see \cite{G,L}). Since then, powerful ring theoretic and homological techniques such as universal localisation~\cite{Scho} and triangulated localisation~\cite{N} were developed. With their help, we are now able to obtain a rather complete picture of how flat epimorphisms look like and how they are controlled by Picard groups, divisor class groups and local cohomology.

\smallskip

In commutative algebra, different types of localisations can be classified geometrically by studying certain subsets of the prime spectrum. More precisely, for a commutative noetherian ring $A$, it was shown in \cite{N} that the assignment of support yields a bijection between localising subcategories of the derived category $\Dcal(A)$ and arbitrary subsets of $\Spec A$. This bijection restricts to a correspondence between smashing subcategories and specialisation closed subsets, which again are in bijection with hereditary torsion classes of the module category $\ModA$. Classically, these torsion classes are linked to the so-called Gabriel localisations and hence to flat ring epimorphisms starting in $A$ (see \cite{G,St}).

One of the key observations here is that a different type of ring theoretical localisation, so-called universal localisation, is very closely connected to flat ring epimorphisms in the context of commutative noetherian rings. In fact, on many occasions flat epimorphisms and universal localisations coincide.

Universal localisations were first introduced in \cite{Scho} in order to extend classical localisation theory in a meaningful way to non-commutative rings. Universal localisations are constructed by adjoining universally the inverses to a set $\Sigma$ of maps between finitely generated projective $A$-modules. As a result, we obtain a ring epimorphism $A\longrightarrow A_\Sigma$ satisfying $\Tor{1}{A}{A_\Sigma}{A_\Sigma}=0$. Universal localisations have proved to be useful in many different areas of mathematics (see \cite{AS,NR,R}), but the concept still remains rather mysterious and classification results are only available in a few cases.

In the context of commutative noetherian rings, however, universal localisation acquires a very concrete geometric meaning: instead of making universally invertible an element of the ring, that is, a section of the structure sheaf of the corresponding affine scheme, we rather make invertible a section in a general invertible sheaf.

Following this point of view, we prove the following result.
The first assertion does not require $A$ to be noetherian, but the second does; otherwise, there is a counterexample.
For the sake of completeness, we remind the reader that if $A\longrightarrow B$ is a ring epimorphism and $A$ is commutative, so is $B$. If, moreover, $B$ is flat over $A$ and $A$ is noetherian, so is $B$ (see \cite{S,L}).

\begin{theorem}[Corollary \ref{r1} and Proposition \ref{Tor1}]\label{r2}
Let $A$ be a commutative ring.
\begin{enumerate}
\item
Any universal localisation of $A$ is a flat ring epimorphism.
\item
A ring epimorphism $A\longrightarrow B$ with $A$ noetherian is flat if and only if $\Tor{1}{A}{B}{B}=0$.
\end{enumerate}
\end{theorem}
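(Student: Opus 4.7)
My plan for part (1) is to exploit the local structure of finitely generated projective modules over a commutative ring. Given a set $\Sigma$ of morphisms $\sigma\colon P \to Q$ between finitely generated projective $A$-modules, localisation at a prime $\mathfrak{p}$ makes both $P_\mathfrak{p}$ and $Q_\mathfrak{p}$ free, so $\sigma_\mathfrak{p}$ is represented by a matrix. The requirement that $\sigma$ become invertible in $A_\Sigma$ forces, at primes lifting to $\Spec{A_\Sigma}$, the local ranks of $P$ and $Q$ to agree, and the induced map on top exterior powers $\det(P)\to\det(Q)$ --- a section of an invertible $A$-module --- must be inverted. Universal localisation thus reduces to localisation at sections of invertible modules; each such localisation is realised as a filtered colimit along $A \to L^{-1} \to L^{-2} \to \cdots$ of invertible (hence flat) modules, and flatness is preserved under such colimits, so $A_\Sigma$ is flat over $A$.

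For part (2), the direction $(\Rightarrow)$ is immediate from the definition of flatness. For $(\Leftarrow)$, I would first exploit the short exact sequence $0 \to A \to B \to C \to 0$ in $\ModA$, where $C = B/A$, and apply $B \otimes_A -$. Using $\Tor{i}{A}{B}{A}=0$ for $i\geq 1$ together with the ring-epimorphism property that the multiplication $B\otimes_A B \to B$ is an isomorphism, the long exact sequence yields $\Tor{1}{A}{B}{C}\cong \Tor{1}{A}{B}{B}$, which vanishes by hypothesis, along with $B \otimes_A C = 0$. A parallel computation applying $C \otimes_A -$ (using $C\otimes_A B\cong C$, since $C$ is a $B$-module and $A\to B$ is an epimorphism) produces $\Tor{1}{A}{C}{C}=0$ and $C\otimes_A C=0$.

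To conclude flatness, I would invoke the classical correspondence outlined in the introduction between hereditary torsion classes of $\ModA$ and flat ring epimorphisms from $A$, which rests on the bijection with specialisation closed subsets of $\Spec{A}$. The tensor- and Tor-vanishing conditions on $C$ ensure that the torsion class of $A$-modules $M$ satisfying $B \otimes_A M = 0$ is hereditary, and hence corresponds to a flat Gabriel localisation $A \to A'$. The morphism $A \to B$ factors through $A'$, and the universal property of the Gabriel localisation together with $A \to B$ being a ring epimorphism force $A' \cong B$, giving flatness.

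The main obstacle is precisely this final step: promoting a single $\mathrm{Tor}_1$-vanishing condition into full flatness. The noetherian hypothesis is essential here (for instance, to use the bijection between hereditary torsion classes and specialisation closed subsets, and to reduce flatness to the vanishing of $\Tor{1}{A}{B}{A/\mathfrak{p}}$ at primes), which accounts for the counterexample the authors note in the non-noetherian case.
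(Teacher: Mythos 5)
For part (1), your route differs from the paper's: the paper checks flatness locally, using that finitely generated projectives over a local ring are free, so every universal localisation of $A_\p$ is classical and hence flat (Lemmas \ref{matrix}, \ref{localisations over local rings} and Corollary \ref{r1}). You instead reduce via exterior powers to inverting sections of invertible modules and then describe $A_\Sigma$ as a directed colimit of invertible (hence flat) modules. This is a reasonable alternative in outline, but it omits the verification that the universal localisation at the whole set $\Sigma$ admits such a colimit description with a compatible ring structure satisfying the universal property; the paper's local check avoids all of this because classical localisations are manifestly flat.

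For part (2), the preliminary Tor computations are fine, but the final step has a genuine gap. You claim that the torsion class $\Tcal=\{M:B\otimes_AM=0\}$ is hereditary and that a hereditary torsion class automatically yields a \emph{flat} Gabriel localisation. Neither holds as stated. For heredity, from $0\to N\to M\to M/N\to0$ with $M\in\Tcal$ one gets a surjection $\Tor{1}{A}{B}{M/N}\twoheadrightarrow B\otimes_AN$, and the vanishing of the left side is precisely what flatness of $B$ would give -- the conditions on $C=B/A$ that you derived do not suffice. More importantly, even granting heredity, hereditary torsion classes over a commutative noetherian ring biject with specialisation closed subsets of $\Spec A$ (Theorem \ref{Gabriel's-Neeman's Classification}), and only those with coherent complement arise from flat ring epimorphisms (Theorem \ref{flat epis}, Corollary \ref{coherent complement}); Example \ref{example non-coherent puncture} shows that the inclusion is strict, so ``hereditary'' does not force the associated localisation to be flat. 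The paper's actual argument for Proposition \ref{Tor1} goes a different route: it reduces to the local case via \cite[Theorem 7.1]{Mat} and proves $f$ is an isomorphism there. Using extension-closure of the essential image $\Xcal_B$ (from $\Tor{1}{A}{B}{B}=0$) together with closure under products and kernels, one shows inductively that $A/\m^n$ and then $\widehat A=\varprojlim A/\m^n$ lie in $\Xcal_B$, so $f\otimes_A\widehat A$ is an isomorphism; faithful flatness of $\widehat A$ (this is exactly where noetherianness enters) then forces $f$ itself to be an isomorphism.
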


Flat ring epimorphisms of commutative noetherian rings are related to the notion of coherent subsets of $\Spec A$, which has been introduced by Krause (see \cite{K}).
More precisely, it turns out that flat ring epimorphisms $A\longrightarrow B$ bijectively correspond to the smashing subcategories in $\Dcal(A)$ whose orthogonal class is closed under cohomologies, which in the terminology of~\cite{K} precisely means that the corresponding specialisation closed subset of $\Spec A$ has coherent complement.
We characterise such specialisation closed subsets in terms of vanishing of local cohomology and we obtain the following classification of flat ring epimorphisms.

\begin{theorem}[Theorem \ref{flat epis} and Corollary \ref{coherent complement}]
Let $A$ be a commutative noetherian ring.
Let $U$ be a generalisation closed subset of $\Spec A$.
Then $U$ is coherent if and only if $H_V^i(A)=0$ for all $i>1$, where $V=\Spec A\setminus U$, and the latter condition implies that the minimal primes in $V$ are of height $\le1$.
In particular, there is a one-to-one correspondence
$$
\left\{
\begin{matrix}
\text{equivalence classes of}\\
\text{flat ring epimorphisms from $A$}
\end{matrix}
\right\}
\overset{\rm 1-1}{\longleftrightarrow}
\left\{
\begin{matrix}
\text{generalisation closed}\\
\text{coherent subsets of $\Spec A$}
\end{matrix}
\right\}.
$$
\end{theorem}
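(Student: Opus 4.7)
The plan is to analyse the localisation triangle
$$R\Gamma_V(A)\longrightarrow A\longrightarrow L_U(A)\longrightarrow R\Gamma_V(A)[1]$$
in $\Dcal(A)$ associated to the smashing subcategory $\Scal$ of objects supported in $V$, and to extract cohomological information from it. Since $A$ is concentrated in degree zero, the induced long exact sequence produces
$$0\longrightarrow H^0_V(A)\longrightarrow A\longrightarrow H^0(L_U(A))\longrightarrow H^1_V(A)\longrightarrow 0$$
together with isomorphisms $H^i(L_U(A))\cong H^{i+1}_V(A)$ for every $i\ge 1$. In particular, $L_U(A)$ lies in degree zero if and only if $H^i_V(A)=0$ for all $i>1$.

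Next I would identify this concentration with coherence of $U$. Assuming $L_U(A)$ sits in degree zero, set $B:=H^0(L_U(A))$; the smashing property gives $B\otimes^L_A B\simeq L_U(L_U(A))\simeq L_U(A)\simeq B$, so that $A\to B$ is a ring epimorphism with $\Tor{i}{A}{B}{B}=0$ for every $i\ge 1$. Theorem~\ref{r2}(2) then yields that $B$ is flat over $A$, whence $\Scal$ arises from a flat ring epimorphism; equivalently $\Scal^\perp$ is closed under cohomologies, which by the discussion in the introduction is precisely the statement that $U$ is coherent. Conversely, if $U$ is coherent, then $L_U(A)$ is a flat ring epimorphism concentrated in degree zero, and the display above forces $H^i_V(A)=0$ for $i>1$.

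For the bound on heights of minimal primes of $V$, I would invoke the Grothendieck non-vanishing theorem. If $\p$ is a minimal prime of $V$ of height $h$, then $V\cap\Spec{A_\p}=\{\p A_\p\}$, and local cohomology commutes with localisation to yield $H^h_V(A)_\p\cong H^h_{\p A_\p}(A_\p)$. Since $A_\p$ is a noetherian local ring of dimension $h$, Grothendieck's theorem gives $H^h_{\p A_\p}(A_\p)\neq 0$, so the hypothesis $H^i_V(A)=0$ for $i>1$ forces $h\le 1$.

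Finally, the one-to-one correspondence assembles the above with the framework recalled in the introduction: specialisation closed subsets of $\Spec{A}$ are in bijection with smashing subcategories of $\Dcal(A)$ via support; those arising from flat ring epimorphisms are exactly the ones with $\Scal^\perp$ closed under cohomologies; and by the equivalence just proved these correspond to the $V$ whose complement $U$ is a generalisation closed coherent subset. The technical hurdle I anticipate is the coherent interleaving of the cohomological long exact sequence, the smashing identification $L_U(L_U(A))\simeq L_U(A)$, and the bridge between Krause's combinatorial definition of coherence and the abstract condition that $\Scal^\perp$ be closed under cohomologies.
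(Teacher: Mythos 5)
Your overall strategy is close to the paper's: both proceed via the localisation triangle $R\Gamma_V(A)\to A\to L_U(A)$ and read off $H^i(L_U(A))\cong H^{i+1}_V(A)$ for $i\ge 1$, both deduce the height bound from flat base change plus Grothendieck's non-vanishing theorem, and both assemble the final bijection from Theorem~\ref{Gabriel's-Neeman's Classification}. Your path from ``$L_U(A)$ concentrated in degree zero'' to ``flat ring epimorphism'' is a genuinely different (and arguably cleaner) local computation: rather than citing Nicol\'as--Saor\'in to present the recollement as coming from a homological epimorphism of DG algebras, you use the smashing/tensor identity $L_U(X)\simeq X\otimes^{\mathbb L}_A L_U(A)$ to read off $B\otimes^{\mathbb L}_A B\simeq B$ directly and then apply Proposition~\ref{Tor1}. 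That identity does hold here (the localising subcategory $\Scal_V$ is a tensor ideal since $\supp(X\otimes^{\mathbb L}_A Y)\subseteq\supp X\cap\supp Y$, and it is smashing by Neeman), but it should be stated and justified rather than invoked under the bare phrase ``the smashing property,'' and you should also say why $B=H^0(L_U(A))$ carries a ring structure making $A\to B$ a ring homomorphism (it is the degree-zero truncation of a commutative DG $A$-algebra).

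The real gap is in the bridge between ``coherence of $U$'' and ``$\Scal^\perp$ closed under cohomologies.'' You write that these are equivalent ``by the discussion in the introduction,'' but the introduction is merely announcing Theorem~\ref{flat epis} and Corollary~\ref{coherent complement}; citing it here is circular. Your chain only establishes $(H^i_V(A)=0\text{ for }i>1)\Leftrightarrow(\text{flat ring epi exists})$, so the remaining equivalence with coherence is precisely conditions $(3)\Leftrightarrow(4)\Leftrightarrow(1)$ of Theorem~\ref{flat epis}. The direction ``flat ring epimorphism $\Rightarrow$ coherent'' is easy (the essential image of $f_*\colon\ModB\to\ModA$ is exact abelian and extension-closed, and it coincides with the modules supported in $U$ via Remark~\ref{remark Neeman}); but the converse ``coherent $\Rightarrow$ $\Scal^\perp$ closed under cohomologies,'' which is exactly what you need to run your final paragraph and obtain $L_U(A)$ in degree zero from coherence, is non-trivial and is supplied in the paper by Krause's \cite[Theorem~1.1]{K}. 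You correctly flagged this bridge as the hurdle, but the proposal does not cross it; without Krause's theorem (or an independent proof of $(4)\Rightarrow(1)$) the ``conversely'' direction does not go through.
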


Note that the concept of coherent subset of the spectrum, originally introduced in module-theoretic terms, now, acquires a geometric interpretation via local cohomology.

\smallskip

Recall from Theorem \ref{r2} that universal localisations always yield flat ring epimorphisms. Clearly, classical localisations at a multiplicative set $S$ are always universal localisations with respect to the set of maps $A\longrightarrow A$ given by multiplication with an element from $S$. The reverse implications, however, generally do not hold -- we obtain the following hierarchy of inclusions
\[
\left\{
\begin{matrix}
\textrm{equivalence classes}\\
\textrm{of classical}\\
\textrm{localisations} \\
f\colon A \longrightarrow B
\end{matrix}
\right\}
\overset{\Phi}\subseteq
\left\{
\begin{matrix}
\textrm{equivalence classes}\\
\textrm{of universal}\\
\textrm{localisations}\\
f\colon A \longrightarrow B
\end{matrix}
\right\}
\overset{\Psi}\subseteq
\left\{
\begin{matrix}
\textrm{equivalence classes}\\
\textrm{of flat ring}\\
\textrm{epimorphisms}\\
f\colon A \longrightarrow B
\end{matrix}
\right\}
\overset{\Theta}{\hookrightarrow}
\left\{
\begin{matrix}
\textrm{specialisation closed}\\
\textrm{subsets } V\subseteq\Spec A\\
\textrm{with minimal primes}\\
\textrm{of height } \le 1
\end{matrix}
\right\}	
\]
where $\Theta$ is injective and sends $f\colon A\longrightarrow B$ to the complement of the image of $f^\flat\colon \Spec B \longrightarrow \Spec A$.

In order to distinguish universal localisations from classical rings of fractions (inclusion $\Phi$), we will analyse the structure of the Picard group $\Pic A$ formed by isoclasses of invertible (hence projective) modules. In order to decide if a given flat ring epimorphism is a universal localisation (inclusion $\Psi$), we will study the corresponding specialisation closed subset (injection $\Theta$). The condition on the height of the minimal primes suggests to consider Weil divisors on $A$, i.e.~formal integer combinations of prime ideals of height one, together with the associated divisor class group $\Chow A$, which is the quotient group obtained by factoring out the principal Weil divisors. This group will be of particular relevance in the case of normal rings, where it contains the Picard group as a subgroup. We obtain the following main results.
\smallskip

\begin{theorem}\label{main thm - divisor groups}
Let $A$ be a commutative noetherian ring. 
\begin{enumerate}
\item
{\rm(Theorems \ref{Pic torsion} and \ref{locally factorial})}
If $\Pic{A}$ is torsion, the inclusion $\Phi$ is an equality, i.e. every universal localisation is a classical ring of fractions. Moreover, the converse holds true if $A$ is locally factorial.
\item
{\rm(Theorems \ref{Krull dim one}, \ref{locally factorial} and \ref{thm normal domain})}
Suppose that $A$ satisfies one of the following conditions:
\begin{enumerate}
\item $A$ has Krull dimension at most one, or
\item $A$ is locally factorial, or (more generally)
\item $A$ is normal and $\Chow A/\Pic A$ is torsion.
\end{enumerate}
Then the inclusions $\Psi$ and $\Theta$ are bijections. In particular, flat ring epimorphisms are universal localisations.
\item
{\rm(Proposition \ref{prop criterion for coherence HLVT} and Theorem \ref{thm normal domain})}
If $A$ is normal, has Krull dimension two and is a G-ring (e.g.\ a flat epimorphic image of a finitely generated algebra over a field or of a complete local ring), then $\Theta$ is a bijection.
In this case, $\Psi$ is an equality if and only if $\Chow A/\Pic A$ is torsion, and both $\Phi$ and $\Psi$ are equalities if and only if $\Chow A$ is torsion.
\end{enumerate}
\end{theorem}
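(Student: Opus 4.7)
The strategy is to handle the three parts in turn, using throughout the correspondence $\Theta$ that assigns to a flat ring epimorphism $f\colon A\to B$ the specialisation closed subset $V=\Spec A\setminus\Img f^\flat$.

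For part (1), let $A\to A_\Sigma$ be a universal localisation at a set $\Sigma$ of maps $\sigma\colon P\to Q$ between finitely generated projectives. After base change each $\sigma$ becomes an isomorphism, so on each connected component of $\Spec A$ the ranks of $P$ and $Q$ coincide, and taking determinants reduces the problem to inverting a section $\det\sigma$ of the invertible sheaf $L_\sigma=(\det P)^{-1}\otimes\det Q$. If $\Pic A$ is $k$-torsion, a trivialisation $L_\sigma^{\otimes k}\cong A$ turns $(\det\sigma)^{\otimes k}$ into an element $a_\sigma\in A$, and inverting $\det\sigma$ universally is equivalent to inverting $a_\sigma$; collecting the $a_\sigma$ into a multiplicative set $S$ yields $A_\Sigma\cong S^{-1}A$. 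For the converse in the locally factorial case, use that $\Pic A=\Cl A$ is generated by classes of height-one primes, so a non-torsion $\Pic A$ produces a height-one prime $\p$ with non-torsion class; the inclusion $\p\hookrightarrow A$ is a morphism of rank-one projectives whose universal localisation has $V=V(\p)$, and this cannot be classical, for otherwise some $s\in\p$ with $V(s)\subseteq V(\p)$ would force $(s)\cong\p^n$ as fractional ideals, contradicting the non-torsion of $[\p]$.

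For part (2), in all three subcases I plan to prove simultaneously the surjectivity of $\Theta$ and the equality $\Psi$. Given a specialisation closed $V\subseteq\Spec A$ with minimal primes of height at most one, the construction proceeds by choosing for each height-one prime $\p$ minimal in $V$ an invertible fractional ideal $J_\p$ with support $V(\p)$ and taking the inclusions $J_\p\hookrightarrow A$ as generators of an explicit $\Sigma$. In case (b), local factoriality makes $\p$ itself locally principal, so $J_\p=\p$ works; in case (c), normality combined with torsion of $\Cl A/\Pic A$ makes a suitable symbolic power of $\p$ invertible and supplies $J_\p$; in case (a), Krull dimension one forces minimal elements of $V$ to be maximal, whose associated invertible ideals (after the standard reductions modulo nilpotents and to the normalisation) play the same role. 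The main step, which is also the main obstacle, is to verify that the universal localisation at $\{J_\p\hookrightarrow A\}$ has $\Theta$-image exactly $V$ rather than a proper sub- or supersets: flatness comes from Theorem~\ref{r2}(1), and a direct support computation combined with the cohomological criterion in Theorem~\ref{flat epis} identifies the specialisation closed complement correctly.

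For part (3), the bijectivity of $\Theta$ reduces via Proposition~\ref{prop criterion for coherence HLVT} to the vanishing of $H^i_V(A)$ for $i>1$, which the $G$-ring hypothesis controls through faithful flatness of completions at height-two primes; Krull dimension two ensures that no higher degrees need to be inspected. The equivalence concerning $\Psi$ is then proved as follows: if $\Cl A/\Pic A$ is torsion, the construction of part~(2)(c) applies; conversely, a height-one prime $\p$ with non-torsion class in $\Cl A/\Pic A$ produces a flat ring epimorphism with $V=V(\p)$ that cannot be a universal localisation, since such a realisation, after taking determinants, would force a positive multiple of $\p$ to lie in $\Pic A$. Finally, the characterisation of when $\Phi$ and $\Psi$ are simultaneously equalities follows by combining this with part~(1): $\Phi$ is an equality iff $\Pic A$ is torsion and $\Psi$ is an equality iff $\Cl A/\Pic A$ is torsion, so both hold iff $\Cl A$ itself is torsion, via the exact sequence $0\to\Pic A\to\Cl A\to\Cl A/\Pic A\to 0$ available in the normal setting.
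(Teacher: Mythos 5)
Your proposal follows the paper's structure and is essentially on target for parts (1), (2)(b) and (2)(c). In part (1), the determinant reduction to invertible ideals and the tensor-power trick when $\Pic A$ is torsion are exactly what the paper does (Proposition~\ref{reduction} and Theorem~\ref{Pic torsion}), and the converse via $\iota_\p\colon \p\hookrightarrow A$ for a height-one prime with non-torsion class matches Theorem~\ref{locally factorial}, up to the small observation that $V(\p)=\bigcup_{s\in S}V(s)$ and irreducibility of $V(\p)$ give $V(\p)=V(s)$ for a \emph{single} $s$, whence $\di(s)=n\cdot\p$. For part (2), using $\p$ itself in the locally factorial case and a symbolic power $\p^{(n)}$ that is invertible in case~(c) (via Lemma~\ref{reconstruct invertible ideal}) is precisely the paper's route in Theorems~\ref{locally factorial} and~\ref{thm normal domain}.

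Your treatment of part (2)(a), however, has a genuine gap. The claim that ``Krull dimension one forces minimal elements of $V$ to be maximal'' is false: a minimal element of $V$ can be a height-zero (i.e.\ minimal) prime of $A$, and these are not maximal unless the ring is artinian at that prime. More seriously, there is no ``standard reduction modulo nilpotents and to the normalisation'' for this problem: a universal localisation is a ring map \emph{out of $A$}, and passing to the normalisation changes the ring, its projective modules, and the entire target of the construction. The paper's Theorem~\ref{Krull dim one} instead works directly over $A$: for a minimal $\p$ in $V$, prime avoidance yields $x\in\p$ avoiding all associated primes outside $V(\p)$; when $V(x)=V(\p)$ one uses multiplication by $x$, and when $V(x)\supsetneq V(\p)$ one constructs a projective ideal $I=I_1\cdots I_n$ from the finitely many extra maximal ideals $\m_i$ (as kernels of $A\to (A/xA)_{\m_i}$) and uses $A\xrightarrow{x} I$. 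This intricate step is the heart of the one-dimensional case and is not captured by your sketch.

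In part (3) there are two issues. First, the role of the $G$-ring hypothesis is analytic irreducibility of the local rings $A_\m$ at height-two primes (Lemma~\ref{lemma analytic irred}), which via HLVT (Theorem~\ref{HLVT'}) gives $H^2_{\p A_\m}(A_\m)=0$; the phrase ``faithful flatness of completions'' points in the wrong direction, since that holds unconditionally. Second, your final deduction relies on the intermediate claim that ``$\Phi$ is an equality iff $\Pic A$ is torsion,'' but the only-if direction of this is established in the paper only for locally factorial rings (part (1)), not for general normal $G$-rings of dimension two. The paper instead derives ``$\Phi$ and $\Psi$ equalities iff $\Cl A$ torsion'' directly from the second bullet of Theorem~\ref{thm normal domain}: once $\Theta$ is a bijection, apply the converse statement there to each $V(\p)$ with $\p$ of height one, concluding $\p$ is torsion in $\Cl A$; since these classes generate $\Cl A$, the group is torsion. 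Your argument would need this direct route rather than the unproven intermediate equivalence.
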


We will show, through various examples, that most assumptions in the theorem cannot be omitted.

In case $A$ is normal, it follows that every flat ring epimorphism is a classical localisation whenever $\Chow A$ is torsion (and so are $\Pic A$ and $\Chow A/\Pic A$). This was essentially already observed in \cite[IV, Proposition 4.5]{L}. In case $A$ is even locally factorial,  we obtain that all flat ring epimorphisms are classical localisations if and only if $\Pic A\cong\Chow A$ is torsion. For Dedekind domains this goes back to \cite[Remark on p. 47]{S} and \cite[IV, Proposition 4.6]{L}.
 
Finally, observe that statement $(2)$, case $(\textrm{c})$ above is not presented in its most general form. In fact, for a specialisation closed subset $V$ it is enough to know that all minimal prime ideals are of height one and torsion in $\Chow A/\Pic A$ to guarantee that $V$ is associated with a universal localisation $f$. Conversely, whenever we know that $V$ has a unique minimal prime ideal $\p$, then the existence of a corresponding universal localisation $f$ already implies that $\p$ is torsion in $\Chow A/\Pic A$. 

\smallskip

\noindent \textbf{Structure of the paper:}  We begin in Section 2 with some preliminaries on ring theoretical and categorical localisations. In Section 3, we focus on commutative rings and recall some facts on supports and local cohomology, followed by a discussion of several groups associated to a commutative ring: the Picard group, the Cartier divisors, the Weil divisors and the divisor class group. Section 4 explores some general facts on the relation between flat ring epimorphisms and universal localisations of commutative noetherian rings, while Section 5 is devoted to developing the theory behind Theorem~\ref{main thm - divisor groups} in this introduction. Finally, in Section 6 we explore some examples showing that our assertions of Section 5 cannot be naively generalised.
\smallskip

%%%%%%%%%%%%%%%%%%%%%%%%%%%%%%%%%%%%%%%%%%%%%%%%%%%%%%%%%%%%%%%%%

\noindent \textbf{Notation.} Let $A$ be a ring (in most occasions, commutative noetherian), and let $\ModA$ be the category of all right $A$-modules. We denote by $\Dcal(A)$ the unbounded derived category of $\ModA$.  All subcategories considered are strict, i.e. closed under isomorphic images. For a subcategory $\Bcal$ of a category $\Acal$, we denote by $\Bcal^{\perp}$ the full subcategory formed by the objects $X$ of $\Acal$ such that $\Hom{\Acal}{B}{X}=0$ for all $B$ in $\Bcal$. 
When $A$ is a commutative ring with prime spectrum $\Spec A$, and $\mathfrak p$ is in $\Spec A$, we denote by $k(\p)=A_\p/\p A_\p$ the residue field of $A$ at $\p$.

%%%%%%%%%%%%%%%%%%%%%%%%%%%%%%%%%%%%%%%%%%%%%%%%%%%%%%%%%%%%%%%%%

\section{Preliminaries: Localisations}

\subsection{Localisations of rings} A {\bf ring epimorphism} $f\colon A\longrightarrow B$ is a ring homomorphism which is an epimorphism in the category of
rings. It is well-known that $f\colon A\longrightarrow B$ is a ring epimorphism if and only if $B\otimes_A\Coker{f}=0$ or equivalently, if and only if the associated restriction functor $f_\ast\colon \ModB\longrightarrow \ModA$ is fully faithful. We say that two ring epimorphisms $f_1\colon A\longrightarrow B_1$ and $f_2\colon A\longrightarrow B_2$ are \textbf{equivalent} if $f_1=\psi f_2$ for a ring isomorphism $\psi$. The equivalence classes of this relation are called {\bf epiclasses}.

Recall that a full subcategory $\Ycal$ of $\ModA$ is {\bf bireflective} if the inclusion functor $i\colon \Ycal\longrightarrow \ModA$ has both a left adjoint and a right adjoint functor, or equivalently, $\Ycal$ is closed under products, coproducts, kernels, and cokernels.
\begin{theorem}\cite[Theorem 1.2]{GdP}\cite[Theorem 4.8]{Scho}\label{bireflective}
There is a bijection between
\begin{enumerate}
\item epiclasses of ring epimorphisms $A\longrightarrow B$ (with $\Tor{1}{A}{B}{B}=0$),
\item bireflective (extension-closed) subcategories of $\ModA$,
\end{enumerate}
sending $f\colon A\longrightarrow B$ to the essential image of the restriction functor \mbox{$f_\ast\colon \ModB\longrightarrow \ModA$}.
\end{theorem}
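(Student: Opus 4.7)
The plan is to construct mutually inverse assignments between the two classes (first ignoring the parenthetical refinement), and then to identify the vanishing of $\Tor{1}{A}{B}{B}$ with closure under extensions under the resulting correspondence.

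For the direction from ring epimorphisms to subcategories, let $f\colon A\longrightarrow B$ be a ring epimorphism and let $\Ycal$ be the essential image of the restriction functor $f_\ast\colon \ModB\longrightarrow \ModA$. Recall from the preliminaries that $f$ being a ring epimorphism is equivalent to the multiplication map $B\otimes_A B\longrightarrow B$ being an isomorphism, and further equivalent to $f_\ast$ being fully faithful. The standard adjoint triple $(-\otimes_A B)\dashv f_\ast\dashv \Hom{A}{B}{-}$ then restricts to a pair of adjoint functors between $\ModA$ and $\Ycal$, so $\Ycal$ is bireflective; in particular, it is automatically closed under products, coproducts, kernels, and cokernels.

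For the converse, let $\Ycal\subseteq \ModA$ be a bireflective subcategory, with reflector $L\colon \ModA\longrightarrow \Ycal$ and unit $\eta$. Set $B:=LA$ and $f:=\eta_A\colon A\longrightarrow B$. The central step is to endow $B$ with a canonical ring structure turning $f$ into a ring epimorphism whose associated essential image of $f_\ast$ recovers $\Ycal$. I would exploit the universal property of $\eta_A$: since $B\in\Ycal$, every $A$-linear map $A\longrightarrow B$ factors uniquely through $\eta_A$, and the $A$-module structure of $B$ assembles in this way into a well-defined associative multiplication on $B$ with unit $f(1)$, for which $f$ is a ring homomorphism. The existence of the right adjoint to the inclusion of $\Ycal$ forces every $A$-homomorphism between objects of $\Ycal$ to be $B$-linear, which implies both that $f$ is a ring epimorphism and that $f_\ast$ is an equivalence onto $\Ycal$. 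A direct verification then shows that the two assignments are mutually inverse up to equivalence of ring epimorphisms.

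The last step, and the one I expect to be the main technical obstacle, is matching the Tor condition with extension-closure. Given an extension $0\longrightarrow Y\longrightarrow X\longrightarrow Y'\longrightarrow 0$ in $\ModA$ with $Y, Y'\in\Ycal$, applying $-\otimes_A B$ yields a Tor long exact sequence; using that the unit $M\longrightarrow M\otimes_A B$ is an isomorphism for every $M\in\Ycal$, one sees that the obstruction to $X$ lying in $\Ycal$ is governed by $\Tor{1}{A}{Y'}{B}$. One then checks that the vanishing of the single group $\Tor{1}{A}{B}{B}$ propagates to $\Tor{1}{A}{Y'}{B}=0$ for all $Y'\in \Ycal$, for instance by presenting $Y'$ as a cokernel of free $B$-modules and applying the associated Tor sequence. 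Conversely, a nonzero element of $\Tor{1}{A}{B}{B}$ produces a non-split extension of $B$ by itself in $\ModA$ whose middle term fails to be a $B$-module, violating extension-closure. Together with the construction of the ring structure on $LA$, this compatibility concludes the proof; everything else reduces to formal manipulation of adjunctions.
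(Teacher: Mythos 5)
The paper does not prove this theorem; it cites it to Gabriel--de la Pe\~na and Schofield, so there is no internal proof to compare against. Judged on its own terms, your proposal has the right overall architecture, and the first half (ring epimorphisms give bireflective subcategories via the adjoint triple $(-\otimes_A B)\dashv f_\ast\dashv\Hom{A}{B}{-}$, and conversely $B:=LA$ acquires a ring structure from the universal property of the reflection) is a reasonable sketch of the standard argument, though the verifications that the multiplication on $LA$ is associative, that $f_\ast$ is an equivalence onto $\Ycal$, and that the two assignments are inverse are all compressed into a single sentence and would need to be spelled out.

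The genuine gap is in the final step, in the direction ``extension-closure of $\Ycal$ implies $\Tor{1}{A}{B}{B}=0$.'' You assert that ``a nonzero element of $\Tor{1}{A}{B}{B}$ produces a non-split extension of $B$ by itself in $\ModA$,'' but there is no natural map $\Tor{1}{A}{B}{B}\longrightarrow \Ext{1}{A}{B}{B}$, and nonvanishing of $\Tor{1}{A}{B}{B}$ does not obviously force $\Ext{1}{A}{B}{B}\ne 0$. The correct argument uses a different extension: take a projective presentation $0\to K\to P\to B\to 0$ in $\ModA$ and form the pushout along the unit $K\to K\otimes_A B$, yielding a short exact sequence $0\to K\otimes_A B\to E\to B\to 0$ whose outer terms lie in $\Ycal$. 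If $\Ycal$ is extension-closed, $E$ is a $B$-module, so $E\cong E\otimes_A B$, and comparing the two rows after applying $-\otimes_A B$ (using that the connecting map $\Tor{1}{A}{B}{B}\to K\otimes_A B$ agrees with the one for the bottom row via the unit isomorphism on $K\otimes_A B$) forces the injective map $\Tor{1}{A}{B}{B}\to K\otimes_A B$ to vanish, hence $\Tor{1}{A}{B}{B}=0$. Your ``if'' direction, by contrast, is fine: vanishing of $\Tor{1}{A}{B}{B}$ propagates to all $B$-modules via free $B$-module presentations, and then the five lemma applied to the unit maps shows extension-closure. So the proposal needs the pushout argument to replace the unjustified $\Tor_1$-to-$\Ext^1$ step, but the rest of the plan is sound.
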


A ring epimorphism $f\colon A\longrightarrow B$ is said to be a {\bf (left) flat epimorphism} if $B$ is a flat (left) $A$-module.
More generally, $f$ is  a  {\bf homological ring epimorphism} if $\mbox{Tor} _i^A(B,B)=0$ for all $i>0$, or equivalently, if the restriction functor $f_\ast\colon \Dcal(B)\longrightarrow\Dcal(A)$ induced by $f$ is fully faithful (\cite[Theorem 4.4]{GL}).
Here is an important tool to construct ring epimorphisms.

\begin{theorem} \cite[Theorem~4.1]{Scho}\label{uniloc} 
Let  $\Sigma$ be a set of
morphisms between finitely generated projective right $A$-modules.
Then there are a ring $A_\Sigma$ and a morphism of rings
$f_\Sigma\colon A\longrightarrow A_\Sigma$, called the \textbf{universal localisation of $A$ at
$\Sigma$}, such that
\begin{enumerate}
\item $f_\Sigma$ is \textbf{$\Sigma$-inverting,} i.e. if
$\alpha\colon P\longrightarrow Q$ belongs to  $\Sigma$, then
$\alpha\otimes_A 1_{A_\Sigma}\colon P\otimes_A A_\Sigma\longrightarrow
Q\otimes_A A_\Sigma$ is an isomorphism of right
$A_\Sigma$-modules, and
\item $f_\Sigma$ is \textbf{universal
$\Sigma$-inverting}, i.e. for any $\Sigma$-inverting ring homomorphism $\psi\colon A\longrightarrow B$, there is a unique ring homomorphism $\bar{\psi}\colon A_\Sigma\longrightarrow B$ such that $\bar{\psi}f_\Sigma=\psi$.
\end{enumerate}
Moreover, the homomorphism $f_\Sigma$ is a ring epimorphism and $\Tor1A{A_\Sigma}{A_\Sigma}=0$.
\end{theorem}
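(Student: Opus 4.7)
The plan is to proceed in three stages: construct $A_\Sigma$ explicitly by generators and relations, deduce both universal properties and the epimorphism statement from this presentation, and then argue the vanishing of $\Tor{1}{A}{A_\Sigma}{A_\Sigma}$.

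For the construction, I would first reduce to the case in which $\Sigma$ consists of square matrices over $A$. This is a standard manoeuvre: for $\alpha\colon P\longrightarrow Q$ in $\Sigma$ one replaces $\alpha$ by its direct sum with identity maps on suitable projective complements, obtaining a map between free modules of the same finite rank that becomes invertible after base change if and only if $\alpha$ does. So we may assume $\Sigma=\{M_i\}_{i\in I}$ with each $M_i\in\mathrm{Mat}_{n_i}(A)$. Now define $A_\Sigma$ as the quotient of the coproduct of $A$ with the free associative $\mathbb{Z}$-algebra on symbols $y^{(i)}_{jk}$ (for $i\in I$ and $1\le j,k\le n_i$) modulo the $2n_i^2$ entry-wise relations encoding $M_iY_i=I_{n_i}=Y_iM_i$, where $Y_i=(y^{(i)}_{jk})$. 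By construction $f_\Sigma\colon A\longrightarrow A_\Sigma$ is $\Sigma$-inverting, giving (1); for (2), any $\Sigma$-inverting $\psi\colon A\longrightarrow B$ extends uniquely to $A_\Sigma$ via $y^{(i)}_{jk}\mapsto(\psi(M_i)^{-1})_{jk}$, the inverse existing and being unique in $\mathrm{Mat}_{n_i}(B)$.

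The epimorphism property then follows directly from uniqueness of inverses: if two ring maps $\phi_1,\phi_2\colon A_\Sigma\longrightarrow C$ agree when composed with $f_\Sigma$, then $\phi_1(Y_i)$ and $\phi_2(Y_i)$ are both two-sided inverses of the common matrix $\phi_1(M_i)=\phi_2(M_i)$ in $\mathrm{Mat}_{n_i}(C)$, hence $\phi_1(y^{(i)}_{jk})=\phi_2(y^{(i)}_{jk})$ for every index. Since these symbols, together with the image of $A$, generate $A_\Sigma$ as a ring, $\phi_1=\phi_2$.

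The main obstacle is the vanishing of $\Tor{1}{A}{A_\Sigma}{A_\Sigma}$. My plan is to express $A_\Sigma$ as a filtered colimit (over finite subsets of $\Sigma$) of rings built by successively inverting one matrix at a time; since $\Tor$ commutes with filtered colimits in both arguments, this reduces the problem to the single-matrix case $R\longrightarrow R[M^{-1}]$ for $M\in\mathrm{Mat}_n(R)$, combined with a short change-of-rings argument controlling $\Tor$ along a composition of ring epimorphisms. For the one-step case the strategy is to describe $R[M^{-1}]$ via a normal form of reduced alternating words in the entries of $M$ and its formal inverse $Y$, and then exhibit an explicit length-one projective resolution of $R[M^{-1}]$ as a left $R$-module whose tensor product on the right with $R[M^{-1}]$ stays exact; the relations $MY=I=YM$ must split cleanly as $R$-bimodule congruences for this to work. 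Verifying the combinatorics of the normal form and the exactness of the resulting resolution is the most delicate ingredient of the whole argument.
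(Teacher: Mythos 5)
The paper does not prove this statement; it is quoted from Schofield's book, so there is no internal argument to compare yours against. Your construction of $A_\Sigma$ and the deduction of properties (1), (2), and the ring-epimorphism property are essentially sound and follow the standard presentation-theoretic route of Cohn and Schofield. (A small caveat: the reduction to square matrices is more subtle than adding identities on projective complements, since doing so does not make either side free; stabilising to maps between free modules of possibly unequal ranks does work, and Schofield himself avoids the matrix reduction and works directly with morphisms of projectives.)

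The genuine gap is the $\Tor_1$ vanishing, and the plan you outline there will not close. Its central ingredient --- describing $R[M^{-1}]$ via a ``normal form of reduced alternating words'' and reading off a length-one projective $R$-resolution --- runs into the fundamental obstruction of the subject: for a general non-commutative ring $R$, elements of a universal localisation admit \emph{no} confluent normal form in the entries of $M$ and its adjoined formal inverse. Determining when such a word represents zero is precisely what Cohn's and Malcolmson's criteria are designed for, and they are not a reduced-word calculus; in particular the relations $MY=I=YM$ do not ``split cleanly as $R$-bimodule congruences.'' So the resolution you want to write down cannot be exhibited in general, and the step you flag as the ``most delicate ingredient'' is the one that does not materialise. (Your filtered-colimit reduction and the change-of-rings step along a composite $A\to B\to C$ are fine in principle --- the latter can be justified by the change-of-rings spectral sequence once one knows $\Tor1ABB=0$ and $\Tor1BCC=0$ --- but these preliminaries only reduce to the one-matrix case, which remains unproven.) The proof in the literature avoids normal forms altogether and instead uses the universal property of $A_\Sigma$ to show that the essential image of the restriction functor is closed under extensions in $\ModA$; by the Gabriel--de la Pe\~na/Schofield bijection (Theorem~\ref{bireflective} of this paper), that is exactly equivalent to $\Tor1A{A_\Sigma}{A_\Sigma}=0$.
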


Easy examples of universal localisations can be obtained from the classical construction of a ring of fractions of a commutative ring $A$ with denominators in a multiplicative set $S$ (which we will often refer to as \textbf{classical localisations}). Indeed, these are universal localisations at the set of $A$-module endomorphisms of $A$ given by multiplication by the elements of $S$. Note, however, that universal localisations of an arbitrary ring $A$ in general are not flat, and not even  homological ring epimorphisms. We will, however, prove in Section \ref{Section ring epi} that universal localisations of commutative rings yield flat ring epimorphisms.

%%%%%%%%%%%%%%%%%%%%%%%%%%%%%%%%%%%%%%%%%%%%%%%%%%%%%%%%%%%%%%%%%
 
\subsection{Categorical localisations} \label{subsection catg loc}
A pair of full subcategories $(\Tcal,\Fcal)$ of $\ModA$ is said to be a \textbf{torsion pair} if $\Hom{A}{\Tcal}{\Fcal}=0$ and, for every $A$-module $M$, there is a short exact sequence 
$$0\longrightarrow T \longrightarrow M\longrightarrow F\longrightarrow 0$$
with $T$ in $\Tcal$ and $F$ in $\Fcal$. It turns out that the modules $T$ and $F$ in such a sequence depend functorially on $M$, and the endofunctor of $\ModA$ sending $M$ to $T$ is said to be the \textbf{torsion radical} associated to $\Tcal$. The subcategory $\Tcal$ is said to be a \textbf{torsion class} and $\Fcal$ is said to be a \textbf{torsionfree class}. Such classes in $\ModA$ can be characterised by closure conditions, namely a subcategory is a torsion (respectively, torsionfree) class if and only if it is closed under extensions, coproducts and epimorphic images (respectively, extensions, products and submodules). We refer to \cite[Chapter VI]{St} for details. 

A torsion class $\Tcal$ in $\ModA$ is said to be {\bf hereditary} if it is closed under submodules, or equivalently, if the corresponding torsionfree class $\mathcal{F}:=\Tcal^\perp$ is closed under injective envelopes. A hereditary torsion class $\Tcal$ is also a  \textbf{Serre subcategory} (i.e. it is closed under extensions, submodules and epimorphic images) and, therefore, it yields an (exact) quotient functor of abelian categories $i^*\colon\ModA \longrightarrow \ModA/\Tcal$. The inclusion functor $j_!\colon\mathcal{T}\longrightarrow\ModA$ and the quotient functor $i^*\colon \ModA\longrightarrow \ModA/\Tcal$ induce a \textbf{localising sequence of abelian categories}
\begin{equation}\nonumber
(\mathcal{L}_\mathcal{T})\ \ \ \ \ \ \ \ \ \xymatrix@C=0.5cm{
\ModA/\mathcal{T} \ar[rrr]^{i_*} &&& \ModA\ar[rrr]^{j^*}  \ar @/_1.5pc/[lll]_{i^*}  &&& \mathcal{T}
\ar @/_1.5pc/[lll]_{j_!} } 
\end{equation}
i.e. $(i^*,i_*)$ and $(j_!,j^*)$ are adjoint pairs, $j_!$ and $i_*$ are fully faithful and $\Ker(i^*)=\Img(j_!)$. It follows that $\Img(i_*)$ can be identified with $\Tcal^\perp\cap \Ker{\Ext{1}{A}{\Tcal}{-}}$ and that the composition $j_!j^*$ is the torsion radical associated to  $\Tcal$.
A well-known theorem states that every flat ring epimorphism arises in this way. 

\begin{theorem}\label{perfect loc}
If $f\colon A\longrightarrow B$ is left flat ring epimorphism, then $\Tcal_f:=\Ker(-\otimes_AB)$ is a hereditary torsion class and there is a localising sequence of the form
\begin{equation}\nonumber
(\mathcal{L}_{\mathcal{T}_f})\ \ \ \ \ \ \ \ \ \xymatrix@C=0.5cm{
\ModB \ar[rrr]^{f_\ast} &&& \ModA\ar[rrr]^{j^*}  \ar @/_1.5pc/[lll]_{-\otimes_AB}  &&& \mathcal{T}_f
\ar @/_1.5pc/[lll]_{j_!} } 
\end{equation}
where $f_\ast$ also admits a right adjoint $i^!=\Hom{A}{B}{-}$.
Moreover, given a hereditary torsion class $\Tcal$, the following are equivalent.
\begin{enumerate}
\item There is a left flat ring epimorphism $f\colon A\longrightarrow B$ such that $\Tcal=\Tcal_f$.
\item The functor $i_*\colon \ModA/\mathcal{T}\longrightarrow \ModA$ in the localising sequence $(\mathcal{L}_\mathcal{T})$ admits a right adjoint.
\end{enumerate}
\end{theorem}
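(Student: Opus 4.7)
The plan is to split the proof into the direct verification of part (1) and then the equivalence in part (2).

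For part (1): since $f$ is flat, $-\otimes_A B$ is exact, so its kernel $\Tcal_f$ is closed under submodules, quotients, extensions and coproducts, hence is a hereditary torsion class with torsionfree class $\Tcal_f^\perp$. Because $f$ is a ring epimorphism, $f_*$ is fully faithful by Theorem~\ref{bireflective}, and the adjunction $(-\otimes_A B,\, f_*)$ identifies $\ModB$ with the Gabriel quotient $\ModA/\Tcal_f$ via a standard exact localisation of abelian categories (exact left adjoint with kernel exactly $\Tcal_f$, fully faithful right adjoint). The inclusion $j_!\colon \Tcal_f \hookrightarrow \ModA$ together with its right adjoint $j^*$, the torsion radical, supplies the remaining half of $(\Lcal_{\Tcal_f})$. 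To produce the right adjoint $\Hom{A}{B}{-}$ of $f_*$, I would equip $\Hom{A}{B}{M}$ with a right $B$-module structure via precomposition by left multiplication in $B$, and apply the tensor--hom adjunction to $N \otimes_B B \cong N$ for $N \in \ModB$ and $M \in \ModA$, obtaining
\[
\Hom{A}{f_*N}{M} \cong \Hom{A}{N \otimes_B B}{M} \cong \Hom{B}{N}{\Hom{A}{B}{M}},
\]
natural in both arguments.

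The implication (1)$\Rightarrow$(2) of the second part is then immediate from what has just been shown, applied to $\Tcal = \Tcal_f$. For the converse, given a hereditary torsion class $\Tcal$ for which $i_*$ admits a right adjoint $i^!$, I would set $B := i_* i^*(A)$ and endow it with a canonical ring structure making the unit $f\colon A \to B$ a ring homomorphism; concretely, $B$ may be identified with $\operatorname{End}_{\ModA/\Tcal}(i^*A)^{\mathrm{op}}$, with multiplication coming from functoriality of $i^*$ together with the full faithfulness of $i_*$. The crucial new input from the existence of $i^!$ is that $i_*$ is now both a left and a right adjoint, hence exact and cocontinuous; combined with exactness and cocontinuity of $i^*$, this forces the idempotent localisation functor $L := i_* i^*$ to be exact and cocontinuous. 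Any cocontinuous functor on $\ModA$ is determined up to natural isomorphism by its value on $A$, so one obtains a natural isomorphism $L(M) \cong M \otimes_A L(A) = M \otimes_A B$. In particular $-\otimes_A B$ is exact, so $B$ is flat over $A$; idempotency of $L$ (from the full faithfulness of $i_*$) yields $B \otimes_A B \cong L(B) \cong LL(A) \cong L(A) = B$, showing that $f$ is a ring epimorphism; and finally $\Tcal_f = \Ker(-\otimes_A B) = \Ker L = \Ker{i^*} = \Tcal$.

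The main obstacle I anticipate is the categorical bookkeeping in the direction (2)$\Rightarrow$(1): endowing $B = i_*i^*(A)$ with a ring structure compatibly with all three adjunctions, and verifying that the natural comparison transformation $-\otimes_A B \to L$ induced from the multiplication $A \otimes_A B \to L(A) = B$ is indeed an isomorphism (rather than merely an isomorphism on $A$ itself). Everything else reduces to standard Gabriel-type localisation theory and the exactness/cocontinuity properties that are forced once $i_*$ acquires a right adjoint.
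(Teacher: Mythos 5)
Your proof is correct and takes a genuinely different route from the paper's. For part (1), the paper cites Stenstr\o{}m~\cite[XI, Theorem 2.1]{St} (going back to Gabriel~\cite{G}) to obtain the localising sequence, while you verify the adjunctions by hand (in particular the tensor--hom adjunction giving $i^!=\Hom{A}{B}{-}$), which is fine. For (2)$\Rightarrow$(1), the paper's key step is to invoke Theorem~\ref{bireflective}: the existence of a right adjoint to $i_*$ makes $\Img(i_*)$ bireflective in $\ModA$, hence by the Gabriel--de la Pe\~na/Schofield bijection it is the essential image of $f_*$ for a ring epimorphism $f\colon A\to B$; flatness then falls out because $i^*\cong -\otimes_A B$ is exact. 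You instead build the ring $B$ directly as $i_*i^*(A)\cong\End_{\ModA/\Tcal}(i^*A)^{\mathrm{op}}$ and use the Eilenberg--Watts theorem on the exact, cocontinuous idempotent functor $L=i_*i^*$ to obtain $L\cong -\otimes_A B$, from which flatness, the epimorphism property (via $B\otimes_AB\cong LL(A)\cong L(A)=B$) and $\Tcal_f=\Ker L=\Tcal$ all follow. In effect you are reproving, in the special case at hand, the piece of Theorem~\ref{bireflective} that the paper cites; your version is more self-contained but longer, and the bookkeeping you flag (ring structure on $B$ compatible with all three adjunctions, and the comparison $-\otimes_AB\to L$ actually being a natural isomorphism rather than merely agreeing on $A$) is precisely what the citation to~\cite{GdP,Scho} is packaging. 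Both routes are sound; one small caveat is that the placement of the $(-)^{\mathrm{op}}$ in the description of $B$ depends on your left/right conventions for $\ModA$ and should be checked carefully, though this does not affect the argument.
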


\begin{proof}
Let $f\colon A\longrightarrow B$ be a left flat ring epimorphism. It is clear that $\Tcal_f$ is a hereditary torsion class. The fact that there is a localising sequence of the form $(\mathcal{L}_{\Tcal_f})$ follows from \cite[XI, Theorem 2.1]{St} and goes back to \cite{G}.

(1)$\Rightarrow$(2) is clear. Let us prove (2)$\Rightarrow$(1). By Theorem \ref{bireflective}, it follows that there is a ring epimorphism $f\colon A\longrightarrow B$ and an equivalence between $\ModA/\mathcal{T}$ and $\ModB$ such that $i_*$ is identified with $f_\ast$. The flatness of $B$ as a left $A$-module follows from the fact that the quotient functor $i^*\colon \ModA\longrightarrow \ModA/\Tcal$ (which is naturally equivalent to $-\otimes_AB$) is exact.
\end{proof}

A triangulated subcategory of $\Dcal(A)$ is called {\bf localising} if it is closed under coproducts, and a localising subcategory $\Scal$ of $\Dcal(A)$ is called {\bf smashing} whenever the inclusion functor $j_!\colon\Scal\longrightarrow\Dcal(A)$ admits a right adjoint that preserves coproducts. In this case, the inclusion $j_!$ and the quotient functor $i^*\colon \Dcal(A)\longrightarrow \Dcal(A)/\Scal$ induce a so-called \textbf{recollement of triangulated categories}
\begin{equation}\nonumber
\xymatrix@C=0.5cm{
\Dcal(A)/\Scal \ar[rrr]^{i_*} &&& \Dcal(A)\ar[rrr]^{j^*}  \ar @/_1.5pc/[lll]_{i^*}  \ar
 @/^1.5pc/[lll]_{i^!} &&& \Scal
\ar @/_1.5pc/[lll]_{j_!} \ar
 @/^1.5pc/[lll]_{j_*}
 } 
 \end{equation}
i.e. $(i^*,i_*,i^!)$ and $(j_!,j^*,j_*)$ are adjoint triples, $i_*$, $j_!$, and $j_*$ are fully faithful and $\Ker(i^*)=\Img(j_!)$. It follows that $\Img(i_*)$ can be identified with $\Scal^\perp$.

\begin{example}\label{example flat ring epi recollement}
If $f\colon A\longrightarrow B$ is a left flat ring epimorphism, then $\Scal_f:=\Ker(-\otimes_A^{\mathbb{L}}B)$ is a smashing subcategory and there is an associated recollement of the form 
\begin{equation}\nonumber
\xymatrix@C=0.5cm{
\Dcal(B) \ar[rrr]^{f_*} &&& \Dcal(A)\ar[rrr]^{j^*}  \ar @/_1.5pc/[lll]_{-\otimes_A^{\mathbb{L}}B}  \ar
 @/^1.5pc/[lll]_{i^!} &&& \Scal_f
\ar @/_1.5pc/[lll]_{j_!} \ar
 @/^1.5pc/[lll]_{j_*}
 } 
 \end{equation}
where $\Scal_f$ identifies with the smallest localising subcategory of $\Dcal(A)$ containing the cone of $f$ when viewed as a morphism in $\Dcal(A)$. Moreover, we have $i^!=\mathbb{R}\Hom{A}{B}{-}$ (see, for example, \cite{NS1}).
\end{example}

In Section \ref{Section ring epi}, we will show that for a commutative noetherian ring $A$ a smashing subcategory $\Scal$ of $\Dcal(A)$ arises from a flat ring epimorphism in the way above if and only if its orthogonal class $\Scal^\perp$ is closed for cohomologies. This observation will provide a derived analogue of Theorem \ref{perfect loc}.

%%%%%%%%%%%%%%%%%%%%%%%%%%%%%%%%%%%%%%%%%%%%%%%%%%%%%%%%%%%%%%%%%

\section{Preliminaries: Commutative Algebra}\label{commutative algebra}

In this section, we  assume $A$ to be a commutative noetherian ring.

\subsection{Supports and local cohomology}
Recall that a prime ideal $\mathfrak p$ is \textbf{associated} to an $A$-module $M$ if $A/\mathfrak p$ is isomorphic to a submodule of $M$. We denote by $\Ass M$ the set of prime ideals associated to~$M$.
Following \cite{K}, we define the \textbf{support} of a complex  of $A$-modules $X$ as
$$\supp X=\{\mathfrak p\in\Spec A\,\mid\, X\otimes^\mathbb{L}_Ak(\p)\not=0\}.$$
For a module $M\in\ModA$, this definition specialises to
$$\supp M=\{\mathfrak p\in\Spec A\,\mid\, \Tor{\ast}{A}{M}{k(\p)}\not=0\}.$$ 
Equivalently (see \cite[Proposition 2.8 and Remark 2.9]{Foxby} or \cite[Lemma 3.3]{K}), $\supp M$ is given by the prime ideals $\p$ occurring as associated primes of the injective modules appearing in a minimal injective coresolution of $M$.

Recall that the Zariski-closed subsets of $\Spec A$ are of the form $V(I):=\{\p\in\Spec A\mid I\subseteq \p\}$, for an ideal $I$ of $A$. If $I$ is a principal ideal generated by an element $x$ of $A$, we denote $V(I)$ simply by $V(x)$. A subset $V$ of $\Spec A$ is said to be \textbf{specialisation closed} if for any primes $\mathfrak{p}\subseteq \mathfrak{q}$, if $\mathfrak{p}$ lies in $V$, then so does $\mathfrak{q}$. Equivalently, $V$ is specialisation closed if it is a union of Zariski-closed subsets of $\Spec A$. In particular, every specialisation closed subset is the union of the Zariski-closures of the primes $\mathfrak{p}$  which are minimal in $V$. 

Observe that, by Nakayama's lemma, the support of a finitely generated module $M$ coincides with the prime ideals $\mathfrak p$ in $\Spec A$ such that $M\otimes_A A_\p\not=0$ (sometimes referred to as the \textit{classical support of $M$}). In general, the classical support of a module $M$ is the Zariski-closure of $\supp M$, see \cite[Lemma 2.2]{BIK}. This shows that a specialisation closed subset $V$ of $\Spec{A}$ contains $\supp M$ if and only if it contains the classical support of $M$. Since the latter has the same minimal elements as $\Ass M$, it also follows that $\supp M\subseteq V$ if and only  if $\Ass M\subseteq V$.

In some cases, the support of a complex $X$ is  determined by the support of its  cohomologies, that is  $\supp X=\bigcup_{i\in\Z}\supp H^i(X)$. 
For instance, if $X$ is a bounded complex of finitely generated $A$-modules, we can compute $\supp X$ by taking the classical support of its cohomologies.
For a  two-term complex of finitely generated projective $A$-modules  $X\colon  P_{-1}\stackrel{\sigma}{\longrightarrow} P_0$, this means that $\supp X$ consists of the prime ideals $\p$ in $\Spec A$ for which $\sigma_\p=\sigma\otimes_A A_\p$ is not an isomorphism. We will often denote the two-term complex $X$ simply by the map $\sigma$. For example, we will write $\supp \sigma$ instead of $\supp X$.

The notion of support turns out to be fundamental in the understanding of hereditary torsion classes in $\ModA$ and smashing subcategories of $\Dcal(A)$.

\begin{theorem}\label{Gabriel's-Neeman's Classification}
\cite[Ch.VI,\S 5,6]{St}\cite{N}
Let $A$ be a commutative noetherian ring. There are bijections between
\begin{enumerate}
\item hereditary torsion classes in $\ModA$;
\item smashing subcategories of $\Dcal(A)$;
\item specialisation closed subsets of $\Spec A$.
\end{enumerate}
More precisely, the bijections are given by assigning to a hereditary torsion class or to a smashing subcategory its support, which is then specialisation closed in $\Spec A$.
 \end{theorem}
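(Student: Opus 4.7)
The plan is to assemble the correspondence from two well-known classification results, Gabriel's for torsion classes and Neeman's for triangulated subcategories, and to check that the intermediate passage via $\Spec A$ is compatible.

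First, I would set up the correspondence $(1)\leftrightarrow(3)$ following Gabriel. Given a hereditary torsion class $\Tcal\subseteq\ModA$, I would put $V(\Tcal):=\supp\Tcal=\bigcup_{M\in\Tcal}\supp M$ and observe that if $M\in\Tcal$ and $\p\in\supp M$, then by the remark in the excerpt $\p\in\Ass M$ for some $M\in\Tcal$ (up to enlarging $M$), so $A/\p$ embeds into $M$ and hence lies in $\Tcal$; then every prime $\q\supseteq\p$ satisfies $A/\q\in\Tcal$ (being a quotient, but more carefully using that $\Tcal$ is hereditary on submodules one argues $A/\q \in \Tcal$ since it is a quotient of $A/\p$, which lies in a torsion class), so $V(\Tcal)$ is specialisation closed. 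Conversely, given a specialisation closed subset $V\subseteq\Spec A$, I would define
\[
\Tcal_V:=\{M\in\ModA\mid\Ass M\subseteq V\}=\{M\in\ModA\mid\supp M\subseteq V\},
\]
where the equality uses the remark in the excerpt that $\supp M\subseteq V$ iff $\Ass M\subseteq V$ for $V$ specialisation closed. Checking that $\Tcal_V$ is closed under submodules, quotients, extensions, and coproducts reduces to standard behaviour of associated primes in the noetherian setting. The two assignments are mutually inverse: $\Tcal_{V(\Tcal)}=\Tcal$ because a noetherian module lies in a hereditary torsion class iff all its associated primes do, and $V(\Tcal_V)=V$ because each $\p\in V$ satisfies $A/\p\in\Tcal_V$ and $\supp(A/\p)$ contains $\p$.

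Next I would establish $(2)\leftrightarrow(3)$, which is the content of Neeman's classification. Given a smashing subcategory $\Scal\subseteq\Dcal(A)$ define $V(\Scal):=\supp\Scal$; since each residue field $k(\p)$ generates a minimal localising subcategory and smashing subcategories are localising and compactly generated (hence determined by their compact objects via Hopkins--Neeman) one deduces that $V(\Scal)$ is specialisation closed. In the other direction, for $V$ specialisation closed set
\[
\Scal_V:=\{X\in\Dcal(A)\mid\supp X\subseteq V\}.
\]
Neeman's classification of localising subcategories asserts that $\Scal_V$ is a localising subcategory and that $\supp$ induces a bijection between localising subcategories and subsets of $\Spec A$; restricting to specialisation closed subsets one obtains precisely the smashing ones. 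The key input is the telescope conjecture for commutative noetherian rings, proved by Neeman, which identifies smashing subcategories with those localising subcategories generated by compact objects $A/I$ with $V(I)\subseteq V$.

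Finally, I would verify that $(1)\leftrightarrow(3)$ and $(2)\leftrightarrow(3)$ are compatible, i.e. that the composition $(1)\leftrightarrow(2)$ assigns to a hereditary torsion class $\Tcal$ the smashing subcategory generated by it in $\Dcal(A)$, and that this passage preserves support (which follows once one knows $\supp M$ for $M\in\ModA$ agrees with the derived support of $M$ viewed in $\Dcal(A)$). The main obstacle, and the nontrivial input, is precisely Neeman's telescope theorem identifying smashing subcategories with specialisation closed subsets; Gabriel's half is classical. With those in hand the three sets are linked by a single invariant, $\supp$, which makes the three bijections commute by construction.
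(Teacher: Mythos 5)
Your proposal is essentially the proof that the paper's citations (Stenström for $(1)\leftrightarrow(3)$, Neeman for $(2)\leftrightarrow(3)$) supply, so it matches the intended argument and correctly identifies Neeman's telescope theorem as the nontrivial input; the paper itself only cites the result and records the explicit $(1)\leftrightarrow(2)$ assignment in Remark~\ref{remark Gabriel's-Neeman's Classification}, which agrees with your final step. One small imprecision in the Gabriel half: from $\p\in\supp M$ with $M\in\Tcal$ you should pass to a finitely generated submodule $N\subseteq M$ (which lies in $\Tcal$ by heredity) with $\p\in\supp N$, pick a minimal prime $\q\subseteq\p$ of $N$ so that $\q\in\Ass N$, deduce $A/\q\in\Tcal$ by heredity, and then $A/\p\in\Tcal$ as a quotient of $A/\q$ by closure of torsion classes under quotients -- your ``enlarging $M$'' and your attribution of quotient-closure to heredity point in the wrong direction, even though the conclusion is correct.
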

 \begin{remark}\label{remark Gabriel's-Neeman's Classification}
 The bijective correspondence between (1) and (2) in the theorem above can also be made explicit. To a hereditary torsion class $\Tcal$ in $\ModA$ we assign the smallest localising subcategory of $\Dcal(A)$ containing $\Tcal$. In the other direction, to a smashing subcategory $\Scal$ of $\Dcal(A)$ we associate the subcategory $\Scal\cap\ModA$ of $\ModA$.
 \end{remark}

Given a specialisation closed subset $V$, we denote the associated hereditary torsion class by $\Tcal_V$ and the associated smashing subcategory by $\Scal_V$.

\begin{remark}\label{remark Neeman}
It also follows from \cite{N} that $\Scal_V\,^\perp$ coincides with the subcategory of $\Dcal(A)$ formed by the objects with support contained in $\Spec A\setminus V$. Indeed, in \cite{N} it is shown that the assignment of support yields a bijection between localising subcategories of $\Dcal(A)$ and subsets of $\Spec A$. Moreover, the support of a localising subcategory $\mathcal{L}$ can be detected as the subset of $\Spec{A}$ formed by the prime ideals $\mathfrak{p}$ whose residue field $k(\mathfrak{p})$ lies in $\mathcal{L}$. As  shown in \cite[Lemma 3.5]{N}, either $k(\mathfrak{p})$ lies in $\mathcal{L}$ or $k(\mathfrak{p})$ lies in $\mathcal{L}^{\perp}$. Now, if $\mathcal L$ is a smashing subcategory, then $\Lcal^\perp$ is localising and, by the arguments above, the associated subsets of the spectrum are complementary.
\end{remark}

If we consider the recollement
\begin{equation} \label{recollement for S_V}
\xymatrix@C=0.5cm{
\Scal_V\,^\perp \ar[rrr]^{i_*} &&& \Dcal(A)\ar[rrr]^{j^*}  \ar @/_1.5pc/[lll]_{i^*}  \ar
 @/^1.5pc/[lll]_{i^!} &&& \Scal_V
\ar @/_1.5pc/[lll]_{j_!} \ar
 @/^1.5pc/[lll]_{j_*}
 } 
\end{equation}
with $j_!$ and $i_*$ being just the inclusion functors, it turns out that the functor $j_!j^*$ coincides with the right derived functor of the torsion radical $\Gamma_V$ of $\Tcal_V$. Indeed, if $X$ is a homotopically injective complex, we have a (by \cite[Corollary 2.1.5]{BS2} even componentwise split) short exact sequence
\[ 0 \longrightarrow \Gamma_V(X) \longrightarrow X \longrightarrow X/\Gamma_V(X) \longrightarrow 0, \]
which induces a triangle in $\Dcal(A)$. Now it follows from~\cite[Lemma 2.10]{N} that $\Gamma_V(X)\in\Scal_V$ and $X/\Gamma_V(X)\in\Scal_V\,^\perp$, so $\Gamma_V(X)\longrightarrow X$ is an $\Scal_V$-reflection, as claimed.

\begin{example}\label{quick way to see support}
Let $f\colon A\longrightarrow B$ be a  flat ring epimorphism, and let $V$ be the specialisation closed subset corresponding to the smashing subcategory $\Scal_f:=\Ker(-\otimes_A^{\mathbb{L}}B)$ from Example~\ref{example flat ring epi recollement}. It follows from Remark~\ref{remark Neeman} that a  prime ideal $\mathfrak{p}$ belongs to $\Spec A\setminus V$ if and only if  $B\otimes_A k(\mathfrak{p})\cong k(\mathfrak{p})$, and it belongs to $V$ if and only if  $B\otimes_A k(\mathfrak{p})=0$, or equivalently, $B=\mathfrak{p}B$. 

Moreover,  it is shown in \cite[IV, Proposition 2.1]{L} that all ideals $J\subseteq B$ are of the form $J=IB$ where $I=f^{-1}(J)$. Hence the map of spectra $f^\flat\colon \Spec{B} \longrightarrow \Spec{A},\, \mathfrak{q}\mapsto f^{-1}({\mathfrak q})$ is a homeomorphic embedding of $\Spec{B}$ onto $\Spec A\setminus V=\supp B$, see also \cite[IV, Proposition 1.4 and  Corollaire 2.2]{L}.
 \end{example}

\begin{definition}
The \textbf{local cohomology} of an $A$-module $M$ with respect to a specialisation closed subset $V$ is $H_V^i(M):=H^i(\mathbb{R}\Gamma_V M)$, where $\Gamma_V$ is the torsion radical of $\Tcal_V$. We will denote $\Gamma_{V(I)}(M)$ and $H_{V(I)}^i(M)$ simply by $\Gamma_I(M)$ and $H_I^i(M)$, respectively.
\end{definition}

Note that $\Gamma_V(M)=\{x\in M\mid\supp(xA)\subseteq V\}$ and, thus, we may recover from above the classical sheaf-theoretic definition of local cohomology from~\cite[Chapter IV]{Har}. Alternatively, we also have that $H_V^i(M)=H^i(j_!j^*M)$, where $j_!$ and $j_*$ are functors appearing in the recollement of $\Dcal(A)$ induced by $\Scal_V$.

We will also need well-known key properties of local cohomology: the independence of the base and the flat base change. Classically, these were studied for specialisation closed subsets of the form $V(I)$, where $I\subseteq A$ is an ideal.

\begin{proposition} \cite[\S4.2 and \S4.3]{BS2} \label{properties of H_I}
Let $f\colon A\longrightarrow B$ be a homomorphism of commutative noetherian rings and $I\subseteq A$ be an ideal. We denote by $f_*\colon \ModB\longrightarrow\ModA$ the restriction functor.

\begin{enumerate}
\item \textbf{Independence of the base:} If $N$ lies in $\ModB$, then there is a functorial isomorphism $f_*(H^i_{IB}(N)) \cong H^i_I(f_*(N))$ in $\ModA$ for each $i\ge 0$.
\item \textbf{Flat base change:} If $M$ lies in $\ModA$ and $B$ is flat over $A$, then there is a functorial isomorphism $H^i_{IB}(M\otimes_A B) \cong H^i_I(M)\otimes_A B$ in $\ModB$ for each $i\ge 0$.
\end{enumerate}
\end{proposition}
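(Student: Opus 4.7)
The plan is to base both statements on the \v{C}ech (stable Koszul) complex model for local cohomology. Writing $I=(x_1,\dots,x_n)$, set
\[
\check{C}(x_1,\dots,x_n;M) := M \otimes_A \bigotimes_{i=1}^{n}\bigl(A \longrightarrow A_{x_i}\bigr),
\]
where each factor is a two-term complex concentrated in degrees $0$ and $1$. The one substantive ingredient, which I would invoke as a standard fact (e.g.\ from Brodmann--Sharp), is the natural isomorphism $H^i_I(M) \cong H^i(\check{C}(x_1,\dots,x_n;M))$ for every $A$-module $M$. Once this reduction is in place, both parts become elementary bookkeeping with localisations.

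For part (1), observe that $IB$ is generated by $f(x_1),\dots,f(x_n)$, and the canonical map $f_*(N_{f(x_i)}) \longrightarrow (f_*N)_{x_i}$ is an isomorphism of $A$-modules: a fraction $n/f(x_i)^k$ represents the same datum regardless of whether $x_i$ is viewed as acting on $N$ through $f$ or directly on $f_*N$. Taking iterated tensor products, this upgrades to an equality of complexes of $A$-modules
\[
f_*\bigl(\check{C}(f(x_1),\dots,f(x_n);N)\bigr) \;=\; \check{C}(x_1,\dots,x_n;f_*N).
\]
Since $f_*$ is exact, it commutes with passage to cohomology, and applying $H^i$ to both sides produces the desired functorial isomorphism $f_*(H^i_{IB}(N)) \cong H^i_I(f_*N)$.

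For part (2), the localisation identity $A_{x_i}\otimes_A B \cong B_{f(x_i)}$ (valid for any ring map) promotes, via the tensor-product presentation of the \v{C}ech complex, to a natural isomorphism of $B$-complexes
\[
\check{C}(x_1,\dots,x_n;M) \otimes_A B \;\cong\; \check{C}(f(x_1),\dots,f(x_n); M \otimes_A B).
\]
The flatness of $B$ over $A$ then enters at precisely one point: $(-)\otimes_A B$ is exact, hence commutes with the cohomology of complexes of $A$-modules, so passage to $H^i$ yields $H^i_I(M)\otimes_A B \cong H^i_{IB}(M\otimes_A B)$. Without flatness only a canonical comparison map would survive, so the hypothesis is essential here. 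In both statements the only real obstacle is the initial identification of local cohomology with \v{C}ech cohomology; granted that, everything else follows from the functoriality of localisation under restriction and base change.
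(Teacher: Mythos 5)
The paper does not prove this proposition at all: it cites \cite[\S4.2 and \S4.3]{BS2} (Brodmann--Sharp, Theorems 4.2.1 and 4.3.2) and moves on. So there is no internal proof to compare against; the only question is whether your argument is correct, and it is.

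Your argument via the stable Koszul (\v{C}ech) complex is a perfectly standard and clean route. Brodmann--Sharp prove these two facts \emph{before} introducing the \v{C}ech complex, working instead from the torsion-functor definition $H^i_I = \varinjlim_n \operatorname{Ext}^i_A(A/I^n,-)$: independence of base there goes through a nontrivial argument that $f_*$ sends injective $B$-modules to $\Gamma_I$-acyclic $A$-modules, while flat base change uses that $\Gamma_I$ and the relevant Ext modules commute with flat base change. Your approach outsources all the homological work to the single comparison theorem $H^i_I(M)\cong H^i(\check{C}(x_1,\dots,x_n;M))$ (which in Brodmann--Sharp appears later, as Theorem 5.1.19, and does use the noetherian hypothesis), after which both statements reduce to the elementary localisation identities $f_*(N_{f(x_i)}) = (f_*N)_{x_i}$ and $(M\otimes_A B)_{f(x_i)} \cong M_{x_i}\otimes_A B$ at the level of complexes, followed by exactness of $f_*$ (always) and of $-\otimes_A B$ (by flatness). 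Both routes are valid; yours is more economical if the \v{C}ech description is already available, whereas Brodmann--Sharp's is self-contained at the stage where they state the results. You correctly pinpoint where flatness enters in part (2): without it the comparison map on cohomology need not be an isomorphism.
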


We will now generalise the proposition to arbitrary specialisation closed subsets. First of all,  for each $A$-module $M$ and each specialisation closed subset $V$ of $\Spec{A}$ we have the equality
\[ \Gamma_V(M) = \bigcup_{I\subseteq A, V(I)\subseteq V} \Gamma_I(M). \]
Indeed, if $x$ is an element of $M$ with $\supp{xA}\subseteq V$, then we can write $\supp{xA} = V(I) \subseteq V$ where $I$ is the annihilator of $x$. Next, we note that this union is directed since the union of finitely many Zariski closed subsets of $\Spec{A}$ is again Zariski closed. Now recall that the local cohomology functors of $M$ are computed by applying the torsion radical to the injective resolution of $M$.  Since cohomologies commute with direct limits, we conclude (cf.~\cite[Page 219, Motif D]{Har}) that for all $i\geq 0$
\[ H^i_V(M) = \varinjlim_{I\subseteq A, V(I)\subseteq V} H^i_I({M}). \]
Taking direct limits of the isomorphisms from Proposition~\ref{properties of H_I}, we obtain the following statement.

\begin{corollary} \label{properties of H_V}
Let $f\colon A\longrightarrow B$ be a homomorphism of commutative noetherian rings, and denote by $f^\flat\colon \Spec{B} \longrightarrow \Spec{A}$ the induced map of spectra and by $f_*\colon \ModB\longrightarrow\ModA$ the corresponding restriction functor. Suppose that $V\subseteq \Spec{A}$ is specialisation closed and $W = (f^\flat)^{-1}(V)$.

\begin{enumerate}
\item If $N$ lies in $\ModB$, then there is a functorial isomorphism $f_*(H^i_W(N)) \cong H^i_V(f_*(N))$ in $\ModA$ for each $i\ge 0$.
\item If $M$ lies in $\ModA$ and $B$ is flat over $A$, then there is a functorial isomorphism $H^i_W(M\otimes_A B) \cong H^i_V(M)\otimes_A B$ in $\ModB$ for each $i\ge 0$.
\end{enumerate}\end{corollary}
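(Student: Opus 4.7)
The plan is to deduce both assertions from Proposition~\ref{properties of H_I} by passing everything to a directed colimit. I would first observe that, by the very same union-of-ideals argument the paper has just given for $A$ (applied now to $B$ in place of $A$ and $W$ in place of $V$), one has
\[
H^i_V(M) \;=\; \varinjlim_{I\subseteq A,\ V(I)\subseteq V} H^i_I(M)
\qquad\text{and}\qquad
H^i_W(N) \;=\; \varinjlim_{J\subseteq B,\ V(J)\subseteq W} H^i_J(N)
\]
for every $A$-module $M$ and every $B$-module $N$. Since $f_*$ is exact and $-\otimes_A B$ is exact when $B$ is flat over $A$, both functors commute with directed colimits, so the two statements will follow from Proposition~\ref{properties of H_I} once I verify that the ideals of the form $IB$ with $I\subseteq A$ and $V(I)\subseteq V$ are cofinal in the directed system $\{J\subseteq B : V(J)\subseteq W\}$.

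The cofinality is the main technical step. For any ideal $I\subseteq A$ with $V(I)\subseteq V$, the identity $V(IB) = (f^\flat)^{-1}(V(I))$ immediately gives $V(IB)\subseteq (f^\flat)^{-1}(V) = W$, so $IB$ does belong to the larger system. Conversely, given $J\subseteq B$ with $V(J)\subseteq W$, I must produce $I\subseteq A$ with $V(I)\subseteq V$ and $V(J)\subseteq V(IB)$. The noetherianness of $B$ guarantees that $V(J)$ has only finitely many minimal primes $\mathfrak q_1,\dots,\mathfrak q_n$, all of which lie in $W$, so their contractions $\mathfrak p_i := f^{-1}(\mathfrak q_i)$ lie in $V$. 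Setting $I := \mathfrak p_1\cap\cdots\cap\mathfrak p_n$, the fact that $V$ is specialisation closed gives $V(I)=\bigcup_i V(\mathfrak p_i)\subseteq V$, while $f(I)\subseteq\mathfrak q_i$ for each $i$ yields $f(I)\subseteq \bigcap_i\mathfrak q_i=\sqrt J$, whence $\sqrt{IB}\subseteq\sqrt J$ and thus $V(J)\subseteq V(IB)$, as required.

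Once cofinality is in hand, part~(1) will follow from the chain
\[
H^i_V(f_*N) \;=\; \varinjlim_{V(I)\subseteq V} H^i_I(f_*N)
\;\cong\; \varinjlim_{V(I)\subseteq V} f_*\bigl(H^i_{IB}(N)\bigr)
\;\cong\; f_*\bigl(H^i_W(N)\bigr),
\]
in which the middle isomorphism applies Proposition~\ref{properties of H_I}(1) termwise, and the last isomorphism combines cofinality with the fact that $f_*$ preserves directed colimits. Part~(2) will be entirely parallel, with $-\otimes_A B$ replacing $f_*$ and the flat base change of Proposition~\ref{properties of H_I}(2) replacing independence of the base.

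The only real obstacle I foresee is the cofinality argument itself: it genuinely needs $B$ to be noetherian, since without finiteness of the minimal primes of $V(J)$ the natural candidate for $I$ would be an intersection of infinitely many primes and one could no longer conclude $V(I)\subseteq V$. The remaining ingredients — functoriality of the isomorphisms in Proposition~\ref{properties of H_I} so that they assemble into maps of the direct systems, and commutation of $f_*$ and $-\otimes_A B$ with directed colimits — are routine and I would invoke them without further comment.
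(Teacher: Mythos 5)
Your approach matches the paper's: pass to directed colimits over ideals $I\subseteq A$ with $V(I)\subseteq V$ and apply Proposition~\ref{properties of H_I} termwise. The paper's own proof is terse (``taking direct limits of the isomorphisms'') and leaves the cofinality of $\{IB : I\subseteq A,\ V(I)\subseteq V\}$ among $\{J\subseteq B: V(J)\subseteq W\}$ implicit, so your spelling out of that step is a welcome addition and the argument you give for it is correct.

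One small remark: the cofinality can be seen more directly by taking $I:=f^{-1}(J)$. Then $IB\subseteq J$ immediately gives $V(J)\subseteq V(IB)$, and $V(I)=\overline{f^\flat(V(J))}\subseteq V$ because $f^\flat(V(J))\subseteq V$ and $V$ is specialisation closed; no appeal to the finiteness of the set of minimal primes of $V(J)$ is required. Consequently your closing comment, that noetherianness of $B$ is what ``genuinely'' makes cofinality work, is not quite right -- the finiteness hypothesis is used earlier in the chain, for instance to identify $\supp{(xN)}$ with $V(\operatorname{Ann}_B(x))$ when establishing the colimit description of $\Gamma_W$, rather than in the cofinality step itself.
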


The following two theorems are concerned with vanishing and non-vanishing of local cohomology. The first one, due to Grothendieck, guarantees the vanishing of local cohomology for degrees larger than the Krull dimension.

\begin{theorem}\cite[Theorem 6.1.2]{BS2}\label{Gr vanishing}
Let $A$ be a commutative noetherian ring of Krull dimension $d$ and $I$ an ideal of $A$. Then we have $H_I^i(A)=0$ for all $i>d$.
\end{theorem}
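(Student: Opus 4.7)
The plan is to prove $H_I^i(A)=0$ for $i>d$ by strong induction on $d=\dim A$, using Krull's principal ideal theorem to strictly decrease dimension in the inductive step.

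A first reduction is to the case where $A$ is a domain. Since $A$ is noetherian, $A$ (viewed as a module over itself) admits a prime filtration $0=A_0\subset A_1\subset\cdots\subset A_n=A$ with subquotients $A_k/A_{k-1}\cong A/\p_k$ for primes $\p_k\in\Spec A$, each of dimension at most $d$. The long exact sequences of local cohomology coming from the short exact sequences $0\longrightarrow A_{k-1}\longrightarrow A_k\longrightarrow A/\p_k\longrightarrow 0$ reduce the desired vanishing on $A$ to the vanishing $H_I^i(A/\p_k)=0$ for $i>d$ for each $k$; and by independence of the base (Proposition~\ref{properties of H_I}(1)) the latter agrees with the local cohomology computed inside the noetherian domain $A/\p_k$ with respect to the ideal $I(A/\p_k)$.

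For the inductive step, take $A$ a noetherian domain of dimension $d$ (the base case $d=0$ reduces, via the above filtration, to the case where $A$ is a field, for which higher local cohomology vanishes for trivial reasons). The cases $I=0$ and $I=A$ are immediate, so pick $0\neq x\in I$. The short exact sequence $0\longrightarrow A\xrightarrow{\,x\,}A\longrightarrow A/xA\longrightarrow 0$ induces
\[
\cdots\longrightarrow H_I^{i-1}(A/xA)\longrightarrow H_I^i(A)\xrightarrow{\,\cdot x\,}H_I^i(A)\longrightarrow H_I^i(A/xA)\longrightarrow\cdots.
\]
Since $A$ is a domain and $x\neq 0$, Krull's principal ideal theorem gives $\dim(A/xA)\leq d-1$. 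The strong induction hypothesis, applied to the noetherian ring $A/xA$, yields $H_I^j(A/xA)=0$ for all $j\geq d$; hence multiplication by $x$ is a bijection on $H_I^i(A)$ whenever $i>d$. On the other hand, $H_I^i(A)$ is $I$-torsion for $i\geq 1$, since this property is built into the definition $\Gamma_I(-)=\{y\mid I^n y=0 \text{ for some } n\}$ applied termwise to an injective resolution. Thus multiplication by $x\in I$ is locally nilpotent on $H_I^i(A)$, and a bijective locally nilpotent endomorphism forces the underlying module to vanish. This gives $H_I^i(A)=0$ for $i>d$.

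The argument is essentially organisational rather than technical: Krull's principal ideal theorem and the long exact sequence of local cohomology do all the work, but the induction must be framed with care because the reduction to domains does not decrease dimension while the passage from a domain $A$ to $A/xA$ does. Stating the inductive hypothesis for arbitrary noetherian rings of dimension $<d$ (rather than only domains) is what allows the step on $A/xA$ to go through without any additional manoeuvre.
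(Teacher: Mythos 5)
The paper does not supply a proof of this statement; it cites Brodmann--Sharp \cite[Theorem 6.1.2]{BS2} directly. So there is no in-paper argument to compare against, and the task reduces to checking your proof on its own merits.

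Your proof is correct, and it follows the standard route (essentially the one in Brodmann--Sharp): a prime filtration of $A$ together with the long exact sequence and independence of the base reduces the claim to quotient domains $A/\p_k$; for a domain one picks $0\ne x\in I$ (necessarily a non-unit once $I\ne A$), Krull's principal ideal theorem gives $\dim(A/xA)\le d-1$, and the long exact sequence for $0\to A\xrightarrow{x}A\to A/xA\to 0$ together with the fact that each $H_I^i(A)$ is $I$-torsion forces the vanishing. All the tools you invoke are available in the paper's framework (independence of the base is Proposition~\ref{properties of H_I}(1); the $I$-torsion property of local cohomology is immediate from the definition). The one organisational point you correctly emphasise --- that the inductive hypothesis must be stated for arbitrary noetherian rings of smaller dimension, not only domains, because the passage to $A/\p_k$ preserves dimension while the passage to $A/xA$ lowers it --- is exactly the place where a careless framing would break, and you handle it properly. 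No gaps.
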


The second one, known as the \emph{Hartshorne-Lichtenbaum Vanishing Theorem} (or HLVT for short), is a useful tool to detect the non-vanishing of local cohomology when the degree equals the Krull dimension. Recall that for a commutative noetherian local ring $A$ with maximal ideal $\mathfrak{m}$, the completion $\widehat{A}$ of $A$ at $\mathfrak{m}$ (defined as the ring $\varprojlim A/\mathfrak{m}^n$) is again a commutative noetherian local ring of the same Krull dimension.

\begin{theorem}\cite[Theorem 8.2.1]{BS2}\label{HLVT'}
Let $A$ be a local commutative noetherian ring of Krull dimension $d$ and with maximal ideal $\m$. Given a proper ideal $I$ of $A$,  the following statements are equivalent.
\begin{enumerate}
\item $H^d_I(A)= 0$;
\item for every prime ideal $\q$ of $\widehat{A}$ satisfying $\dim\widehat{A}/\q=d$, we have $\dim\widehat{A}/(I\widehat{A}+\q)>0$.
\end{enumerate}
\end{theorem}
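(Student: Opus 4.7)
The plan is to reduce the statement to the case where $A$ is a complete local domain and then establish the two implications in that restricted setting. The passage from $A$ to $\widehat{A}$ is essentially formal: condition (2) is already stated in terms of $\widehat{A}$, so it is insensitive to completion, while on the cohomological side the flat base change of Corollary~\ref{properties of H_V}, applied to the faithfully flat morphism $A\longrightarrow \widehat{A}$ and the specialisation closed subset $V(I)$, provides a canonical isomorphism $H^d_{I\widehat{A}}(\widehat{A})\cong H^d_I(A)\otimes_A\widehat{A}$. Faithful flatness then gives $H^d_I(A)=0$ if and only if $H^d_{I\widehat{A}}(\widehat{A})=0$, so we may assume $A$ is complete.

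Next I would reduce to the domain case. Let $\q_1,\dots,\q_n$ be the minimal primes of $A$ with $\dim A/\q_i=d$; all other minimal primes have strictly smaller coheight, and the primes appearing in condition (2) are precisely the $\q_i$. Consider the natural map $A\hookrightarrow \prod_i A/\q_i$; its kernel and cokernel have Krull dimension strictly less than $d$, so Grothendieck's vanishing theorem (Theorem~\ref{Gr vanishing}) forces $H^d_I$ to annihilate them. Chasing the long exact sequence of local cohomology and using the independence of base from Corollary~\ref{properties of H_V} to rewrite $H^d_I(A/\q_i)\cong H^d_{I(A/\q_i)}(A/\q_i)$, one deduces that $H^d_I(A)=0$ is equivalent to $H^d_{I(A/\q_i)}(A/\q_i)=0$ for every $i$. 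Simultaneously, condition (2) for $A$ translates into the requirement that $\dim (A/\q_i)/I(A/\q_i)>0$ for every $i$. The problem therefore reduces to proving the equivalence under the additional hypothesis that $A$ is a $d$-dimensional complete local domain and condition (2) reads $\dim A/I>0$.

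The heart of the proof is now this domain case. If $\dim A/I=0$, then $I$ is $\m$-primary and $H^d_I(A)=H^d_\m(A)$, which is non-zero by Grothendieck's non-vanishing theorem for the maximal ideal of a local ring. Conversely, assume $\dim A/I>0$. By Cohen's structure theorem, $A$ is a module-finite extension of a complete regular local ring $R\cong k[[x_1,\dots,x_d]]$; the independence of base from Corollary~\ref{properties of H_V} allows us to view $H^d_I(A)$ as a local cohomology group of $A$, seen as an $R$-module, with respect to the ideal $I\cap R$, and by the finiteness of $A$ over $R$ we still have $\dim R/(I\cap R)>0$. It then suffices to prove the vanishing $H^d_{J}(R)=0$ whenever $R$ is a $d$-dimensional complete regular local ring and $J\subseteq R$ is a proper ideal with $\dim R/J>0$. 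This can be done by induction on $d$ using an explicit \v{C}ech-complex computation: one picks a regular system of parameters of which some element lies outside a chosen minimal prime over $J$, which allows one to identify a suitable Koszul/\v{C}ech piece with a colimit of local cohomology groups in one fewer variable and conclude by the induction hypothesis.

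The main obstacle is the last step, namely the vanishing in the complete regular case. The formal reductions are routine applications of the base change and independence statements already recorded in the excerpt together with Grothendieck's vanishing theorem, but the non-trivial input is the interplay between the formal power series structure of $R$ and the Krull dimension of $R/J$; this is what forces the use of Cohen's structure theorem and a direct \v{C}ech calculation (or, equivalently, local duality against the canonical module of $R$). Any approach that avoids completion must reintroduce it here, which is precisely why the hypothesis of completeness in condition (2) cannot be dropped.
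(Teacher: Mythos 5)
The paper does not actually prove this theorem: it is quoted with the citation \cite[Theorem 8.2.1]{BS2} (Brodmann--Sharp), so there is no proof in the paper to compare your argument against. Your overall strategy (complete, then reduce to domains, then to a complete regular local ring via Cohen's structure theorem, then compute) is the standard plan, but as written there are two genuine gaps, both in the reduction steps you describe as routine.

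\textbf{Gap in the reduction to domains.} You claim the kernel of $A\to\prod_i A/\q_i$ (product over minimal primes with $\dim A/\q_i=d$) has Krull dimension $<d$. This fails when $A$ is not reduced. Take $A=k[[X,Y]]/(X^2)$, $d=1$, $\q_1=(X)$; the kernel is $XA\cong A/\operatorname{Ann}(X)=A/(X)\cong k[[Y]]$, which has dimension $1=d$. (Even after passing to $A_{\mathrm{red}}$, the transfer $H^d_I(A)=0 \Leftrightarrow H^d_I(A_{\mathrm{red}})=0$ is not immediate and itself needs an argument.) The correct and standard fix is to use a prime filtration $0=M_0\subset\cdots\subset M_r=A$ with $M_j/M_{j-1}\cong A/\p_j$, apply the module version of Grothendieck vanishing to the cyclic subquotients with $\dim A/\p_j<d$, right-exactness of $H^d_I$ in the other direction, and the independence of base to pass from $H^d_I(A/\q_i)$ to $H^d_{I(A/\q_i)}(A/\q_i)$.

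\textbf{Gap in the reduction to the regular case.} After Cohen's structure theorem you assert that independence of base lets you read $H^d_I(A)$ as a local cohomology of $A|_R$ with respect to $I\cap R$. This is not what Corollary~\ref{properties of H_V}(1) says: it identifies $H^i_W(A)|_R$ with $H^i_V(A|_R)$ only when $W$ is the \emph{preimage} of $V$ under $\Spec A\to\Spec R$. The preimage of $V(I\cap R)$ is $V\bigl((I\cap R)A\bigr)$, which in general strictly contains $V(I)$ (the two have the same image in $\Spec R$, but $V(I)$ is usually not a union of fibres of the finite map). A concrete example: $R=k[[X,Y]]$, $A=R[T]/(T^2-XY)$, $I=(X-Y,\,T-Y)$; then $I\cap R=(X-Y)$, yet $(X-Y)A$ has the two distinct minimal primes $(X-Y,\,T-Y)$ and $(X-Y,\,T+Y)$, so $\sqrt{(I\cap R)A}\subsetneq \sqrt{I}$. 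Consequently $H^d_I(A)$ and $H^d_{(I\cap R)A}(A)$ are genuinely different, and the natural map points from the former to the latter, so vanishing of $H^d_{(I\cap R)A}(A)$ does \emph{not} force $H^d_I(A)=0$. Closing this gap requires a further reduction (e.g., a Mayer--Vietoris argument over the primes of $A$ lying over $I\cap R$, or a Matlis duality argument as in Brodmann--Sharp), which is substantially more than ``routine base change.'' In short, the \v{C}ech/Koszul computation over $k[[x_1,\dots,x_d]]$ that you single out as the main obstacle is plausible, but the reductions leading up to it are where the real subtleties of HLVT live, and both of yours have holes.
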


\begin{remark}\label{rem Grothendieck}
If $A$ is a local commutative noetherian ring of Krull dimension $d$ and with maximal ideal $\m$, since $\dim\widehat{A}/(\mathfrak{m}\widehat{A}+\q)=0$ for any prime $\q$ of $\widehat{A}$, it follows from the above theorem that $H^d_\mathfrak{m}(A)\neq 0$. This statement is known as \emph{Grothendieck's non-vanishing Theorem} (see \cite[Theorem 6.1.4]{BS2}).

Note also that the primes $\q$ of $\widehat{A}$ satisfying $\dim\widehat{A}/\q=d$ must necessarily be minimal primes of $\widehat{A}$.
\end{remark}

%%%%%%%%%%%%%%%%%%%%%%%%%%%%%%%%%%%%%%%%%%%%%%%%%%%%%%%%%%%%%%%%%

\subsection{Invertible ideals and divisors}
We say that an element of $A$ is \textbf{regular} if it is not a zero-divisor. Let $K$ denote the (classical) localisation of $A$ at all regular elements of $A$, that is, the total ring of fractions of $A$.

\begin{definition}
An $A$-module $M$ is said to be \textbf{invertible} if it is finitely generated and locally free of rank one (i.e. $M_\mathfrak{p}\cong A_\mathfrak{p}$ for all $\mathfrak{p}\in\Spec{A}$).
\end{definition}

Note that, since projectivity can be checked locally, an invertible $A$-module is, by definition, projective. Invertible modules are so called due to the well-known fact that the evalutation map yields an isomorphism $M\otimes_A M^\ast\cong A$, where $M^\ast=\Hom{A}{M}{A}$ (\cite[Theorem 11.6a]{E}). Isoclasses of invertible $A$-modules form an abelian group under the operation $\otimes_A$. This group is called the \textbf{Picard group of $A$} and it is denoted by $\Pic{A}$.

\begin{definition} 
An $A$-module $M$ is said to be a \textbf{fractional ideal of $A$} if it is a finitely generated $A$-submodule of $K$.
\end{definition}

Fractional ideals are so named due to the fact that, once we choose a common denominator $g\in A$ for the generators of such a module $M$, we see that $gM$ can be identified with an actual ideal of $A$, which is isomorphic to $M$ as an $A$-module. 

\begin{theorem}\cite[Theorem 11.6 and Corollary 11.7]{E}
Let $A$ be a commutative noetherian ring and $K$ be its total ring of fractions. Then the following holds.
\begin{enumerate}
\item Every invertible $A$-module is isomorphic to an invertible fractional ideal of $A$.
\item Every invertible fractional ideal of $A$ contains a regular element of $A$.
\item Invertible fractional ideals form an abelian group for the operation of multiplication of $A$-sub\-modules of $K$. The inverse of an invertible fractional ideal $M$ is $M^{-1}:=\{s\in K:sM\subseteq A\}$. This group is the group of \textbf{Cartier divisors of $A$} and is denoted by $\C{A}$.
\item The group $\C{A}$ is generated by the invertible ideals of $A$.
\item The assignment sending an invertible fractional ideal $M$ to its isoclass yields a surjective homomorphism of abelian groups $\C{A}\la \Pic{A}$, whose kernel  is isomorphic to $K^\times/A^\times$.
\end{enumerate}
\end{theorem}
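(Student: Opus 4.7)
The plan is to establish the five claims in order, exploiting throughout a key structural feature of the total ring of fractions: for commutative noetherian $A$, the regular elements are exactly those avoiding $\bigcup_{\p\in\Ass A}\p$, so by prime avoidance the primes of $K$ correspond to $A$-primes contained in some associated prime, and $K$ is a semi-local ring whose maximal ideals are in bijection with $\Ass A$. Consequently, every finitely generated projective $K$-module of constant rank is free.

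For (1), I would start from an invertible $M$ and note that flatness of $M$ makes $M\longrightarrow M\otimes_A K$ injective. The module $M\otimes_A K$ is finitely generated projective over $K$ of constant rank one, hence free of rank one by semi-locality; fixing an identification $M\otimes_A K\cong K$ then realises $M$ as a finitely generated $A$-submodule of $K$. For (2), writing a fractional ideal as $g^{-1}J$ with $g\in A$ regular and $J\subseteq A$ an ideal reduces the problem to producing a regular element in an invertible ideal $J\subseteq A$. If all elements of $J$ were zero-divisors, then $J\subseteq\bigcup_{\p\in\Ass A}\p$, so prime avoidance forces $J\subseteq\p$ for some $\p\in\Ass A$. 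Choosing $a\in A\setminus\{0\}$ with $\Ann{a}=\p$, we would then have $J_\p\cdot(a/1)=0$ in $A_\p$, contradicting the fact that $J_\p\cong A_\p$ is a rank-one free $A_\p$-module.

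For (3), closure under multiplication follows by checking that the canonical surjection $I\otimes_A J\longrightarrow IJ$ is an isomorphism for invertible fractional ideals $I,J$: the module $I\otimes_A J$ is invertible, hence flat, so it embeds in $(I\otimes_A J)\otimes_A K\cong K$, and this embedding factors through $IJ\subseteq K$; in particular $IJ$ is invertible. For inverses, the embedding $I\hookrightarrow K$ together with $I\otimes_A K\cong K$ identifies $\Hom{A}{I}{A}$ with $I^{-1}=\{s\in K\mid sI\subseteq A\}$, and under this identification the canonical evaluation isomorphism $I\otimes_A\Hom{A}{I}{A}\cong A$ becomes the multiplication $I\cdot I^{-1}=A$. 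Part (4) is then immediate: the decomposition $I=J\cdot(gA)^{-1}$ used in (2) exhibits every element of $\C{A}$ as a product of classes of invertible ideals of $A$.

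For (5), surjectivity is immediate from (1), and multiplicativity from $I\otimes_A J\cong IJ$. An invertible fractional ideal $I$ is trivial in $\Pic{A}$ if and only if $I\cong A$, and any such isomorphism takes $1$ to some $s\in K$ with $I=sA$; part (2) then forces $sA$ to contain a regular element, which pins down $s\in K^\times$, and two such $s,s'$ determine the same principal fractional ideal exactly when $s/s'\in A^\times$, giving the identification $\ker\cong K^\times/A^\times$. The main delicate point I anticipate is the identification $\Hom{A}{I}{A}\cong I^{-1}$ in step (3), which implicitly uses that $I$ contains a regular element; (2) must therefore be established before the group structure in (3) can be set up, so the logical order of the arguments is important.
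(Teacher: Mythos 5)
Your proof is correct, and its structure is the standard one; the paper itself does not prove this theorem but merely cites it from Eisenbud~\cite[Theorem 11.6 and Corollary 11.7]{E}, so there is no in-paper proof to compare against in detail. The one argument the paper does supply inline is exactly part (4) — writing $I = (gI)(gA)^{-1}$ for a regular $g$ with $gI\subseteq A$ — and your treatment of (4) matches it verbatim. Two tiny points worth tightening, neither of which is a real gap: (i) the maximal ideals of $K$ correspond to the \emph{maximal} elements of $\Ass A$, not to all of $\Ass A$ (some associated primes can be nested), though this does not affect semi-locality or the rank-one freeness you need; (ii) in (5), the cleanest way to see $s\in K^\times$ is to note that a regular $g\in sA$ gives $g=sa$ for some $a\in A$, and since $g$ is a unit of $K$ and units in a commutative ring have unit factors, $s$ is a unit — or alternatively, $sA\cdot(sA)^{-1}=A$ directly produces $t$ with $st=1$. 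Your observation that (2) must precede (3) because $\Hom{A}{I}{A}\cong I^{-1}$ uses the existence of a regular element in $I$ is exactly the kind of logical-order care that Eisenbud's exposition also takes.
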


Note that for an invertible fractional ideal, one has that $M^{-1}\cong M^\ast$. It is quite easy to see that $\C{A}$ is indeed generated by the invertible ideals of $A$. Let $I$ be an element of $\C{A}$ and let $g$ be a regular element of $A$ such that $gI\subseteq A$. Clearly, $gA$ is also an invertible ideal of $A$ and $I=(gI)(gA)^{-1}$.
\begin{definition}
The free abelian group with basis formed by the prime ideals of $A$ of height one is denoted by $\Div{A}$ and its elements (i.e. formal integer  combinations of such prime ideals) are called \textbf{(Weil) divisors}. A Weil divisor is said to be \textbf{effective} if it is a non-negative integer combination of prime ideals of height one. Given a Weil divisor $x=\sum n_\mathfrak{p}\mathfrak{p}$, we say that $x$ \textbf{is supported on} the (finite) subset of $\Spec{A}$ formed by the primes  $\mathfrak{p}$ with $n_\mathfrak{p}\neq 0$.
\end{definition}

Let $I$ be an invertible ideal of $A$ and let $\mathfrak{p}\in V(I)$ be a prime ideal  of height one (we write $\rm{ht}(\mathfrak{p})=1$). Since $I$  contains a regular element, $\mathfrak{p}$ corresponds to one of the finitely many minimal primes in $A/I$, and $\dim A_\mathfrak{p}/I_\mathfrak{p}=0$. Hence the $A_{\mathfrak p}$-module $A_\mathfrak{p}/I_\mathfrak{p}$ has finite length   $\ell(A_\mathfrak{p}/I_\mathfrak{p})>0$ for a finite number of primes $\mathfrak{p}$ of height one, and vanishes for the others.

\begin{theorem}\cite[Theorem 11.10]{E}
Let $A$ be a commutative noetherian ring. There is a homomorphism of abelian groups $\di\colon \C{A}\la \Div{A}$ which sends an invertible ideal $I$ of $A$ to 

$$
\di(I):=\sum\limits_{\mathrm{ht}(\mathfrak{p})=1}\ell(A_\mathfrak{p}/I_\mathfrak{p})\cdot\mathfrak{p}.
$$
\end{theorem}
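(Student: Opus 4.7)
The plan is to establish, in sequence: (a) that for each invertible ideal $I$ of $A$, the formal sum $\di(I)$ is finitely supported and therefore is an honest Weil divisor; (b) that $\di$ is multiplicative on invertible ideals; (c) that it extends consistently to the group $\C{A}$ of Cartier divisors, which by the preceding theorem is generated by the invertible ideals of $A$.

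For (a), note that an invertible ideal $I$ contains a regular element $g$, so $V(I)\subseteq V(g)$. Because $g$ is regular, the minimal primes of $A/gA$ all have height one (e.g.\ by Krull's principal ideal theorem) and there are only finitely many of them. A prime $\mathfrak{p}$ of height one with $I_\mathfrak{p}\neq A_\mathfrak{p}$ lies in $V(g)$ and is minimal there, so there are only finitely many such primes. The finiteness of $\ell(A_\mathfrak{p}/I_\mathfrak{p})$ for each such $\mathfrak{p}$ was already observed in the excerpt, since $A_\mathfrak{p}/I_\mathfrak{p}$ is a zero-dimensional noetherian local module.

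For (b), let $I,J$ be invertible ideals and fix a prime $\mathfrak{p}$ of height one. Localising at $\mathfrak{p}$, both $I_\mathfrak{p}$ and $J_\mathfrak{p}$ become free of rank one over $A_\mathfrak{p}$, hence of the form $aA_\mathfrak{p}$ and $bA_\mathfrak{p}$ for regular elements $a,b\in A_\mathfrak{p}$; then $(IJ)_\mathfrak{p}=abA_\mathfrak{p}$. Multiplication by $a$ gives an isomorphism $A_\mathfrak{p}/bA_\mathfrak{p}\cong aA_\mathfrak{p}/abA_\mathfrak{p}$, and the short exact sequence
\[
0\longrightarrow aA_\mathfrak{p}/abA_\mathfrak{p}\longrightarrow A_\mathfrak{p}/abA_\mathfrak{p}\longrightarrow A_\mathfrak{p}/aA_\mathfrak{p}\longrightarrow 0
\]
together with additivity of length on short exact sequences yields $\ell(A_\mathfrak{p}/(IJ)_\mathfrak{p})=\ell(A_\mathfrak{p}/I_\mathfrak{p})+\ell(A_\mathfrak{p}/J_\mathfrak{p})$. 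Summing over all height-one primes gives $\di(IJ)=\di(I)+\di(J)$, and evaluating at $I=J=A$ shows $\di(A)=0$.

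For (c), any Cartier divisor can be written as $M=I(gA)^{-1}$ with $I, gA$ invertible ideals, by the remarks following the previous theorem. We define $\di(M):=\di(I)-\di(gA)$. To see this is well-defined, suppose $I(gA)^{-1}=J(hA)^{-1}$ as fractional ideals; then $I(hA)=J(gA)$ as invertible ideals of $A$, and applying the multiplicativity from step~(b) yields $\di(I)+\di(hA)=\di(J)+\di(gA)$, whence $\di(I)-\di(gA)=\di(J)-\di(hA)$. The same multiplicativity now propagates to all of $\C{A}$, so $\di\colon \C{A}\longrightarrow \Div{A}$ is a group homomorphism that agrees with the given formula on invertible ideals.

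The only slightly delicate point is step~(a): one must justify that a height-one prime containing $I$ is automatically minimal over the principal ideal generated by a regular element $g\in I$, invoking Krull's height theorem and the finiteness of associated primes. The remaining verifications are local computations over a one-dimensional local ring and a routine well-definedness check on a quotient group presentation of $\C{A}$.
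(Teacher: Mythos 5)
The paper does not prove this statement; it is cited from Eisenbud~\cite[Theorem~11.10]{E}, with the finiteness observation of step~(a) already sketched in the paragraph preceding the theorem. Your argument is correct and is essentially the standard textbook proof: the finiteness of the support via Krull's principal ideal theorem applied to a regular element $g\in I$, additivity of $\ell(A_\p/\text{--})$ across the short exact sequence $0\to aA_\p/abA_\p\to A_\p/abA_\p\to A_\p/aA_\p\to 0$ at each height-one prime, and a well-definedness check for the extension from invertible ideals to all of $\C{A}$ using the presentation $M=I(gA)^{-1}$. One small point worth making explicit in step~(b): the generator $a$ of $I_\p$ is regular in $A_\p$ because $I_\p$ contains the image of the regular element $g\in I$, so $g/1=au$ for some $u\in A_\p$, and a divisor of a nonzerodivisor is a nonzerodivisor. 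You implicitly use this, and it is what makes multiplication by $a$ injective. With that, all three steps hold.
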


A general formula for $\rm{div}$ can then be derived from the fact that the invertible ideals of $A$ generate $\C{A}$. If $I$ is an invertible fractional ideal, then $$\di(I):=\di(gI)-\di(gA)$$ where $g$ is a regular element of $A$ such that $gI\subseteq A$.

\begin{remark}\label{div and support}
If $I$ is an invertible ideal of $A$ (hence a projective $A$-module) and $\sigma$ denotes its inclusion into $A$, then $\ell(A_\mathfrak{p}/I_\mathfrak{p})$ in the expression above is non-zero precisely on the primes $\mathfrak{p}$ of height one which lie in $\supp{\sigma}$. In other words, we have that $\di(I)$ is supported on $\supp{\sigma}\cap \{\mathfrak{p}\in\Spec{A}:\rm{ht}(\mathfrak{p})=1\}$.
\end{remark}

\begin{definition}
A divisor is said to be \textbf{principal} if it is the image under $\rm{div}$ of a cyclic invertible fractional ideal, that is, it has the form $\di(sA)$ for some $s$ in $K^\times$. We write  $\di(s)$ for short. The \textbf{divisor class group} $\Chow{A}$ of $A$ is defined to be the quotient of $\Div{A}$ by the subgroup $\PDiv{A}$ of principal divisors.
\end{definition}

\begin{remark}
Our reference~\cite[\S11.5]{E} uses a different terminology and notation. The group $\Chow{A}$ is called the codimension-one Chow group and is denoted by $\operatorname{Chow}(A)$ there.
\end{remark}

Note that in particular, for an invertible fractional ideal $I$ and $g$ regular in $A$ such that $gI \subseteq A$, the divisors $\di(I)$ and $\di(gI)$ get identified in $\Chow{A}$. In fact, it is easy to see that the image of $\di(I)$ in $\Chow{A}$ depends only on the isoclass of $I$. Therefore, the composition $\C{A}\la \Div{A}\la \Chow{A}$ factors through a group homomorphism $\overline{\rm{div}}\colon \Pic{A}\la \Chow{A}$, and we get a commutative diagram:
\begin{equation}\label{diagram of divisors}\vcenter{\xymatrix{0\ar[r]&\{sA:s\in K^\times/A^\times\}\ar[r]\ar@{->>}[d]^{\rm{div}}&\C{A}\ar[r]\ar[d]^{\rm{div}}&\Pic{A}\ar[d]^{\overline{\rm{div}}}\ar[r]&0 \\ 0\ar[r]& \PDiv{A}\ar[r] & \Div{A}\ar[r]&\Chow{A}\ar[r]&0}}\end{equation}

In Section \ref{subsection divisors advanced} we will also  discuss variants of groups of divisors for graded rings and their induced projective schemes. 

%%%%%%%%%%%%%%%%%%%%%%%%%%%%%%%%%%%%%%%%%%%%%%%%%%%%%%%%%%%%%%%%%

\section{Flat ring epimorphisms and universal localisations: General facts}\label{Section ring epi}
In this section, we discuss some general facts about ring epimorphisms and universal localisations of commutative rings. Note that if $A$ is a commutative ring and $A\longrightarrow B$ is a ring epimorphism, then also $B$ is commutative (\cite[Corollary 1.2]{S}). Moreover, if $A$ is noetherian and $A\longrightarrow B$ is flat, then also $B$ is noetherian (\cite[IV, Corollary 2.3]{L}).
We begin with a general construction of new ring epimorphisms from given ones.

\begin{lemma}\label{new ring epis}
Let $A$ be an arbitrary ring, $f\colon A\longrightarrow B$ and $g\colon A\longrightarrow C$ be ring epimorphisms with associated bireflective subcategories $\Xcal_B$ and $\Xcal_C$. Denote by $B\sqcup_A C$ (together with the morphisms $h_B\colon B\longrightarrow B\sqcup_A C$ and $h_C\colon C\longrightarrow B\sqcup_A C$) the pushout of $f$ and $g$ in the category of rings. Then $h_B$ and $h_C$ are ring epimorphisms and the bireflective subcategory associated to the composition $h_Bf=h_Cg$ is given by $\Xcal_B\cap \Xcal_C$.
\end{lemma}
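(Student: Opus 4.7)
The plan is to exploit the correspondence recorded in Theorem \ref{bireflective}, which identifies epiclasses of ring epimorphisms starting in $A$ with bireflective subcategories of $\ModA$. Under this correspondence, $\Xcal_B$ (respectively $\Xcal_C$) is precisely the essential image of the restriction functor $f_\ast\colon \ModB\longrightarrow \ModA$ (respectively $g_\ast$), i.e.\ the class of $A$-modules whose $A$-action extends (necessarily uniquely, by fully faithfulness of restriction) to a $B$-action via $f$ (respectively to a $C$-action via $g$).

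First, I would observe that $h_B$ and $h_C$ are ring epimorphisms by a purely categorical argument: in any category, the pushout of an epimorphism along an arbitrary morphism is again an epimorphism. Concretely, if $\alpha,\beta\colon B\sqcup_A C\longrightarrow R$ satisfy $\alpha h_B = \beta h_B$, then $\alpha h_C g = \alpha h_B f = \beta h_B f = \beta h_C g$, so since $g$ is epi we deduce $\alpha h_C = \beta h_C$; the universal property of the pushout then forces $\alpha=\beta$. Symmetrically for $h_C$. In particular, the composition $h_B f = h_C g$ is also a ring epimorphism, and hence gives rise to a well-defined bireflective subcategory of $\ModA$; call it $\Xcal$.

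Next I would identify $\Xcal$ with $\Xcal_B\cap \Xcal_C$. By Theorem \ref{bireflective}, a module $M$ of $\ModA$ lies in $\Xcal$ if and only if its $A$-action factors through a (necessarily unique) action of $B\sqcup_A C$. Translating this in terms of ring homomorphisms into $\End_\Z(M)$, an $A$-action on $M$ extends to a $(B\sqcup_A C)$-action exactly when the induced ring map $A\longrightarrow \End_\Z(M)$ factors through $B\sqcup_A C$. By the universal property of the pushout, such a factorisation amounts to a pair of ring maps $B\longrightarrow \End_\Z(M)$ and $C\longrightarrow \End_\Z(M)$ whose common restrictions to $A$ coincide with the given $A$-action, i.e.\ to the data of a $B$-module structure (equivalently, $M\in\Xcal_B$) and a $C$-module structure (equivalently, $M\in\Xcal_C$) both extending the $A$-action on $M$. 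Since these extensions are unique when they exist, compatibility is automatic. Hence $M\in\Xcal$ if and only if $M\in\Xcal_B\cap\Xcal_C$.

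The main potential subtlety, and the only place that requires some care, is the compatibility statement in the pushout argument: one needs the extensions of the $A$-action to $B$ and $C$ to agree on the image of $A$ so as to produce a genuine ring map out of $B\sqcup_A C$. This is where the fact that $f$ and $g$ are epimorphisms is used, since it forces the extensions of a given $A$-action to be unique, and in particular their restrictions back to $A$ to coincide. Once this is in place, the identification $\Xcal = \Xcal_B\cap \Xcal_C$ is immediate from the universal property of the pushout.
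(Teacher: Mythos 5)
Your proof is correct and takes essentially the same approach as the paper's: the paper likewise identifies $\Xcal_{B\sqcup_A C}$ with $\Xcal_B\cap\Xcal_C$ by translating membership in these subcategories into factorisations of the $A$-action $\alpha_X\colon A\to\End_\Z(X)$ and invoking the universal property of the pushout. The only difference is that the paper treats the fact that $h_B$ and $h_C$ are epimorphisms as immediate from the definition of pushout, whereas you spell out the standard categorical argument that a pushout of an epimorphism is an epimorphism (and, incidentally, the compatibility you worry about at the end is automatic simply because both extensions restrict by definition to the same $A$-action, so no appeal to uniqueness is needed there).
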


\begin{proof}
It follows from the very definition of the pushout that $h_B$ and $h_C$ are ring epimorphisms (and thus so is the composition $h_Bf=h_Cg$). Moreover, an $A$-module $X$ belongs to $\Xcal_B\cap \Xcal_C$ if and only if the $A$-action $\alpha_X\colon A\longrightarrow\End_\Z(X)$ factors through both $f\colon A\longrightarrow B$ and $g\colon A\longrightarrow C$. Using the universal property of the pushout, this is further equivalent to $\alpha_X$ factorising through $A\longrightarrow B\sqcup_A C$ or, in other words, to $X$ lying in the bireflective subcategory $\Xcal_{B\sqcup_A C}$ associated to the composition $h_Bf=h_Cg$.
\end{proof}

Note that in the lemma above, if $A$ is a commutative ring, then the pushout $B\sqcup_A C$ is given by $B\otimes_AC$ (together with the two natural maps $h_B=id_B\otimes_A1_C$ and $h_C=1_B\otimes_Aid_C$).
In particular, given a ring epimorphism $f\colon A\longrightarrow B$ and a prime $\mathfrak{p}$ in $\Spec A$, there is a ring epimorphism $f_\p\colon A_\mathfrak{p}\longrightarrow B_\mathfrak{p}$ and, consequently, also a ring epimorphism $A\longrightarrow B_\mathfrak{p}$ which we shall denote by $\hat{f}_\mathfrak{p}$.

\subsection{Universal localisations}
Let us now turn our attention to universal localisations of a commutative ring $A$. Our first target is to show that these localisations always yield flat ring epimorphisms (independently of $A$ being noetherian or not). In order to do so, we begin with studying universal localisations from a local point of view. We need the following auxiliary results.

\begin{lemma}\label{matrix}
Let $A$ be a commutative ring and let $\sigma\colon A^n\longrightarrow A^m$ be a linear map with associated matrix $M\in M_{m\times n}(A)$.
\begin{enumerate}
\item If $n=m$, then $\sigma$ is an isomorphism if and only if ${\rm det}M$ is invertible in $A$.
\item If $n\not=m$ and $\sigma$ is an isomorphism, then $A=0$.
\end{enumerate}
\end{lemma}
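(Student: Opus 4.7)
For part (1), I would argue both implications by means of the classical adjugate formula $M \cdot \mathrm{adj}(M) = \mathrm{adj}(M)\cdot M = (\det M)\cdot I_n$, which is valid over any commutative ring. If $\det M$ is a unit, then $(\det M)^{-1}\mathrm{adj}(M)$ provides a two-sided inverse to $M$, so $\sigma$ is an isomorphism. Conversely, if $\sigma$ admits an inverse with associated matrix $N$, then $MN = NM = I_n$; taking determinants and using the multiplicativity of $\det$ over a commutative ring gives $\det(M)\det(N) = 1$, so $\det M \in A^\times$.

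For part (2), the plan is to reduce to the well-known fact that vector spaces over a field have a well-defined dimension. The hypothesis $n\neq m$ prevents this at the level of $A$ itself, but becomes immediately contradictory after passing to a residue field. Concretely, suppose for contradiction that $A\neq 0$. Then $A$ has a maximal ideal $\m$, and setting $k = A/\m$, the functor $-\otimes_A k$ carries the isomorphism $\sigma$ to an isomorphism of $k$-vector spaces $\sigma\otimes_A k\colon k^n\longrightarrow k^m$. By the invariance of dimension over a field, $n=m$, contradicting the assumption. Hence $A=0$.

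The main (and in fact only) subtlety is making sure in part (1) that the adjugate identity and the multiplicativity of $\det$ hold over an arbitrary commutative ring; both follow formally from the Leibniz/cofactor definitions, which are purely polynomial in the matrix entries. In part (2), the argument is entirely painless once one localises at a maximal ideal and kills $\m$. Both statements are completely routine but are recorded here because they underpin the geometric/local analysis of the complexes $\sigma\colon P_{-1}\to P_0$ appearing in the definition of universal localisations.
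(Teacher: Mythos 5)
Your proof is correct, and both parts carry out exactly the standard arguments that the paper invokes without spelling out: the adjugate/determinant identity for (1), and the reduction modulo a maximal ideal to establish the invariant basis number property for (2). Since the paper merely cites (1) as ``well known'' and (2) as a consequence of IBN for nonzero commutative rings, your write-up simply supplies the expected details and matches the intended route.
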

\begin{proof} (1) is well known. (2) follows from the fact that any non-zero commutative ring has the invariant basis number property (i.e. isomorphic finitely generated free modules must have the same rank).
\end{proof}

\begin{lemma}\label{localisations over local rings}
Any universal localisation of a local commutative ring  is a classical localisation.
\end{lemma}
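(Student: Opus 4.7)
Let $f_\Sigma\colon A\longrightarrow A_\Sigma$ be a universal localisation of the local commutative ring $A$ at a set $\Sigma$ of morphisms between finitely generated projective $A$-modules. The plan is to reduce $\Sigma$ to a multiplicative subset of $A$ (namely the determinants of the matrices representing the elements of $\Sigma$) and then to compare universal properties.

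First, since $A$ is local, every finitely generated projective $A$-module is free, so each $\sigma\in\Sigma$ can be represented, after a choice of bases, as a linear map $\sigma\colon A^{n_\sigma}\longrightarrow A^{m_\sigma}$ with associated matrix $M_\sigma\in M_{m_\sigma\times n_\sigma}(A)$. By definition of the universal localisation, $\sigma\otimes_A 1_{A_\Sigma}$ is an isomorphism of $A_\Sigma$-modules for every $\sigma\in\Sigma$. By Lemma~\ref{matrix}, this forces a dichotomy for each $\sigma$: either $n_\sigma\ne m_\sigma$ and then $A_\Sigma=0$, or $n_\sigma=m_\sigma$ and then $f_\Sigma(\det M_\sigma)$ is a unit in $A_\Sigma$.

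If $A_\Sigma=0$, the zero ring is the classical localisation of $A$ at any multiplicative set containing $0$, and we are done. Otherwise, every $\sigma\in\Sigma$ has a square matrix $M_\sigma$ and $f_\Sigma$ inverts every element of the set $S:=\{\det M_\sigma \mid \sigma\in\Sigma\}\subseteq A$. Let $\overline{S}$ be the multiplicative closure of $S$ and let $\ell\colon A\longrightarrow A[\overline{S}^{-1}]$ be the corresponding classical localisation. The claim is that $f_\Sigma$ and $\ell$ are equivalent ring epimorphisms; the argument is a routine universal-property chase.

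For one direction, since $f_\Sigma$ inverts every element of $\overline{S}$, the universal property of the classical localisation provides a unique ring homomorphism $\alpha\colon A[\overline{S}^{-1}]\longrightarrow A_\Sigma$ with $\alpha\ell=f_\Sigma$. For the converse, Cramer's rule shows that if $\det M_\sigma$ becomes invertible in $A[\overline{S}^{-1}]$ then $M_\sigma$ itself becomes invertible over $A[\overline{S}^{-1}]$ (its inverse being $(\det M_\sigma)^{-1}$ times the adjugate), so $\ell$ is $\Sigma$-inverting. The universal property in Theorem~\ref{uniloc} then yields a unique ring homomorphism $\beta\colon A_\Sigma\longrightarrow A[\overline{S}^{-1}]$ with $\beta f_\Sigma=\ell$. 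By uniqueness in both universal properties, the compositions $\alpha\beta$ and $\beta\alpha$ must be identities, so $f_\Sigma$ is (equivalent to) a classical localisation.

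The only genuinely delicate point is the reduction in the second paragraph: we must verify that after possibly discarding a zero-ring case, inverting a map between free modules of the same rank is equivalent to inverting its determinant. This hinges on Lemma~\ref{matrix}(1) together with Cramer's rule, and is the main substantive step of the argument; the rest is formal manipulation of universal properties.
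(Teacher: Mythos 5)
Your proof is correct and follows the same approach as the paper, which in fact compresses the entire argument into the single sentence "This follows from the previous lemma, since finitely generated projective modules over local rings are well-known to be free." You have merely filled in the details — the reduction to square matrices via Lemma~\ref{matrix}(2), the passage to determinants via Lemma~\ref{matrix}(1) and Cramer's rule, and the routine universal-property chase — that the authors leave implicit.
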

\begin{proof}
This follows from the previous lemma, since finitely generated projective modules over local rings are well-known to be free.
\end{proof}

Given a prime ideal $\p$ in $\Spec A$ and a universal localisation $f_\Sigma\colon A\longrightarrow A_\Sigma$, we can construct two further ring epimorphisms, namely $(f_\Sigma)_\p:A_\p\longrightarrow (A_\Sigma)_\p$ and $(\hat{f}_\Sigma)_\p:A\longrightarrow (A_\Sigma)_\p$. Note that it follows from Lemma \ref{new ring epis} and the very definition of universal localisation that $(f_\Sigma)_\p$ is the universal (and hence classical) localisation of $A_\p$ at the set $\Sigma_\p:=\{A_\p\otimes_A\sigma\mid \sigma\in\Sigma\}$ and that $(\hat{f}_\Sigma)_\p$ is the universal localisation of $A$ at the set $\Sigma\cup\{A\overset{s}{\longrightarrow}A\mid s\in A\setminus\p\}$. Now we have the desired result.

\begin{corollary}\label{r1}
Any universal localisation of a commutative ring is a flat ring epimorphism.
\end{corollary}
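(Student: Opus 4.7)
The plan is to check flatness locally and then invoke Lemma \ref{localisations over local rings}. Since $f_\Sigma \colon A \to A_\Sigma$ is already known to be a ring epimorphism by Theorem \ref{uniloc}, the whole task reduces to proving that $A_\Sigma$ is flat as an $A$-module.

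First I would recall that, for a commutative ring $A$, an $A$-module $M$ is flat if and only if $M_\mathfrak{p}$ is flat as an $A_\mathfrak{p}$-module for every $\mathfrak{p} \in \Spec A$. Applying this to $M = A_\Sigma$, it suffices to verify that $(A_\Sigma)_\mathfrak{p} = A_\Sigma \otimes_A A_\mathfrak{p}$ is a flat $A_\mathfrak{p}$-module for each prime $\mathfrak{p}$.

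The key step is the localisation-of-universal-localisations argument recorded in the paragraph immediately preceding the corollary: by Lemma \ref{new ring epis} (with $C = A_\mathfrak{p}$) together with the universal property of $f_\Sigma$, the pushout $(f_\Sigma)_\mathfrak{p} \colon A_\mathfrak{p} \to (A_\Sigma)_\mathfrak{p}$ is itself the universal localisation of $A_\mathfrak{p}$ at $\Sigma_\mathfrak{p} = \{A_\mathfrak{p} \otimes_A \sigma \mid \sigma \in \Sigma\}$. Since $A_\mathfrak{p}$ is a local commutative ring, Lemma \ref{localisations over local rings} then tells us that $(f_\Sigma)_\mathfrak{p}$ is a classical localisation of $A_\mathfrak{p}$, and classical localisations are well known to be flat.

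Putting the pieces together, $(A_\Sigma)_\mathfrak{p}$ is flat over $A_\mathfrak{p}$ for every $\mathfrak{p} \in \Spec A$, hence $A_\Sigma$ is flat over $A$ by the local criterion of flatness, and $f_\Sigma$ is thus a flat ring epimorphism. I do not expect any genuine obstacle here: all the work has already been done in the two preceding lemmas and the remark about pushouts, so the proof is essentially a one-line assembly. The only point that requires a sentence of justification is the identification $(A_\Sigma)_\mathfrak{p} \cong A_\Sigma \otimes_A A_\mathfrak{p}$ as pushouts in the category of commutative rings, which is immediate from Lemma \ref{new ring epis}.
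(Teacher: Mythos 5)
Your proof is correct and follows the same route as the paper: reduce to checking flatness locally, identify $(f_\Sigma)_\mathfrak{p}$ as the universal localisation of $A_\mathfrak{p}$ at $\Sigma_\mathfrak{p}$ (via Lemma \ref{new ring epis}), and invoke Lemma \ref{localisations over local rings} to conclude it is a classical (hence flat) localisation. The only difference is that you spell out the intermediate identification of $(f_\Sigma)_\mathfrak{p}$ more explicitly, which the paper relegates to the paragraph preceding the corollary.
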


\begin{proof}
We can check flatness of $A_\Sigma$ locally. Now it is enough to observe that by Lemma \ref{localisations over local rings} the universal localisation $(f_\Sigma)_\p:A_\p\longrightarrow (A_\Sigma)_\p$ must be classical and, hence, $(A_\Sigma)_\p$ is a flat $A_\p$-module.
\end{proof}

%%%%%%%%%%%%%%%%%%%%%%%%%%%%%%%%%%%%%%%%%%%%%%%%%%%%%%%%%%%%%%%%%

\subsection{Flat ring epimorphisms}
Throughout this subsection, we consider a commutative noetherian ring $A$ and we study flat ring epimorphisms $A\longrightarrow B$. We begin with an easy homological description.

\begin{proposition}\label{Tor1}
Let $A$ be a commutative noetherian ring. Then a ring epimorphism $f\colon A\longrightarrow B$ is flat if and only if $\Tor{1}{A}{B}{B}= 0$. 
\end{proposition}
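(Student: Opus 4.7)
One direction is immediate from the definition of flatness: if $B$ is $A$-flat then $\Tor{i}{A}{B}{-}$ vanishes for all $i\geq 1$, in particular $\Tor{1}{A}{B}{B}=0$. For the converse, assume $f\colon A\to B$ is a ring epimorphism with $\Tor{1}{A}{B}{B}=0$; the plan is to prove $B$ is $A$-flat.

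First, I would reduce to the local case. For any prime $\mathfrak{p}$ of $A$, Lemma~\ref{new ring epis} applied to the flat localisation $A\to A_\mathfrak{p}$ yields a ring epimorphism $A_\mathfrak{p}\to B_\mathfrak{p}:=B\otimes_A A_\mathfrak{p}$, and flat base change gives $\Tor{1}{A_\mathfrak{p}}{B_\mathfrak{p}}{B_\mathfrak{p}}\cong \Tor{1}{A}{B}{B}\otimes_A A_\mathfrak{p}=0$. Since flatness is a local property, it suffices to treat the case where $A$ is local noetherian.

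The main technical ingredient is that $\Tor{1}{A}{B}{N}=0$ for every $B$-module $N$ (regarded as an $A$-module via $f_\ast$). This follows cleanly from the change-of-rings (Grothendieck) spectral sequence
\[E_2^{p,q}=\Tor{p}{B}{\Tor{q}{A}{B}{B}}{N}\Longrightarrow \Tor{p+q}{A}{B}{N},\]
which has $E_2^{p,1}=0$ for all $p$ by hypothesis and $E_2^{p,0}=\Tor{p}{B}{B}{N}=0$ for $p\geq 1$ since $B$ is projective over itself, so no nonzero contribution reaches total degree $1$.

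Now specialise to $A$ local noetherian with maximal ideal $\mathfrak{m}$ and residue field $k$. Tensoring the identity $B\otimes_A B\cong B$ with $k$ shows that $k\to B\otimes_A k$ is a ring epimorphism of $k$-algebras, and since any ring epimorphism from a field is either zero or an isomorphism, $B/\mathfrak{m}B=B\otimes_A k\in\{0,k\}$. In the case $B/\mathfrak{m}B=k$, the residue field $k$ lies in the bireflective subcategory $\Xcal_B$, so the previous step yields $\Tor{1}{A}{B}{k}=0$; combined with a noetherian induction on the Krull dimension of $A$ (so that $B_\mathfrak{q}$ is $A_\mathfrak{q}$-flat for every prime $\mathfrak{q}\subsetneq\mathfrak{m}$) and the prime filtration of finitely generated modules, this should give flatness of $B$. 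The main obstacle I anticipate is the opposite case $\mathfrak{m}B=B$: here $k\notin\Xcal_B$, so the spectral sequence argument does not apply to $k$ directly, and one must argue separately---most plausibly by showing that in this situation $f$ factors through a suitable classical localisation of $A$ inverting elements of $\mathfrak{m}$. This latter step uses the noetherianness of $A$ crucially, in line with the remark in the introduction that the statement fails without that hypothesis.
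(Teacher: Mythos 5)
Your spectral-sequence observation that $\Tor{1}{A}{B}{N}=0$ for every $B$-module $N$ is correct, and it is essentially the statement that the bireflective subcategory $\Xcal_B$ is exact abelian and extension-closed, which the paper invokes via Theorem~\ref{bireflective}. So that ingredient is sound. The problems lie in the local reduction and in how you try to finish.

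First, localising at primes of $A$ is the wrong reduction, and the troublesome case you flag at the end ($\m B = B$) is exactly what it creates. The paper instead applies \cite[Theorem 7.1]{Mat}: it suffices to show that $B_\q$ is flat over $A_\p$ for each $\q\in\Spec B$ with $\p=f^\flat(\q)$. After that reduction $A$ and $B$ are both local \emph{and} $f$ is a local homomorphism, so $f(\m)\subseteq\n$ and $\m B\ne B$ automatically. The opposite case never occurs. Your suggestion that in the case $\m B=B$ one should factor through a classical localisation is only a guess; as written this is a genuine gap.

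Second, even in the case $B/\m B\cong k$, the conclusion is not reached. You deduce correctly that $k\in\Xcal_B$ and hence $\Tor{1}{A}{B}{k}=0$, but the claim that this plus noetherian induction on Krull dimension plus prime filtration yields flatness does not go through: the local flatness criterion ($\Tor{1}{A}{B}{k}=0 \Rightarrow B$ flat) requires $B$ to be finitely generated or $\m$-adically separated over $A$, neither of which holds here, and the induction hypothesis only gives that $\Tor{1}{A}{B}{A/\q}$ is $\m$-torsion for $\q\ne\m$, not that it vanishes. The paper closes this by a different route: from $k\in\Xcal_B$ and closure of $\Xcal_B$ under finite sums, extensions, kernels and products, one gets $A/\m^n\in\Xcal_B$ for all $n$ and hence $\widehat{A}=\varprojlim A/\m^n\in\Xcal_B$, so $f\otimes_A\widehat{A}$ is an isomorphism; since $\widehat{A}$ is faithfully flat over the noetherian local $A$, the map $f$ itself is an isomorphism (not merely flat), and this is where noetherianness is used. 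You should look for an argument of that shape rather than trying to force the local flatness criterion.
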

\begin{proof}
We only prove the if-part as the other implication is trivial. In view of~\cite[Theorem 7.1]{Mat}, it is sufficient to prove that for each $\q\in\Spec B$ and $\p = f^\flat(\q)$, the induced ring epimorphism $A_\p\longrightarrow B_\q$ is flat. Thus, we may without loss of generality assume that $A$ and $B$ are local and $f$ is a local homomorphism (i.e.\ $f(\m)\subseteq \n$, where $\m\subseteq A$ and $\n\subseteq B$ are the maximal ideals). We will prove that under these assumptions, $f$ is even an isomorphism.

To this end, let $\Xcal_B$ denote the essential image of the restriction functor in $\ModA$. By assumption, $\Xcal_B$ is an exact abelian subcategory closed under products and extensions. Since $\overline{f}\colon A/\m \to B/\n$ is a non-zero ring epimorphism and $A/\m$ is a field, $\overline{f}$ is an isomorphism by~\cite[Corollary 1.2]{S} (see also \cite[IV, Corollaire 1.3]{L}). In particular, $A/\m$ lies in $\Xcal_B$. It then follows inductively that $A/\m^n$ is in $\Xcal_B$ for each $n\ge1$ and also that $\widehat{A}=\varprojlim A/\m^n$ belongs to $\Xcal_B$.
Thus, $f\otimes_A \widehat{A}$ is an isomorphism. Since $A$ is noetherian, $\widehat{A}$ is a faithfully flat $A$-module by \cite[Theorems 7.2 and 8.8]{Mat} and the conclusion follows since $\widehat{A}\otimes_A-$ reflects isomorphisms.
\end{proof}

\begin{remark}
Note that the fact that $A$ is noetherian is needed only in the very last step of the proof to infer that $\widehat{A}\otimes_A-$ reflects isomorphisms. This may fail for non-noetherian commutative rings as may the conclusion of Proposition~\ref{Tor1} (see \cite[Theorem 7.2]{BS}).
\end{remark}

\begin{remark}
Alternatively, one can deduce Proposition~\ref{Tor1} as a consequence of \cite[Lemma 3.5]{K}, which implies that every injective $B$-module is injective as an $A$-module. It then suffices to apply this to the character module $B^+:=\Hom{\mathbb{Z}}{B}{\mathbb{Q}/\mathbb{Z}}$.
\end{remark}

\begin{corollary}
Let $A$ be a commutative noetherian ring. Then every bireflective extension-closed subcategory of $\ModA$ is equivalent to $\ModA/\Tcal$ for some hereditary torsion class $\Tcal$.
\end{corollary}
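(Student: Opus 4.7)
The plan is to chain together three results already established in the excerpt: the Gabriel--de la Pe\~na--Schofield bijection (Theorem \ref{bireflective}), the new Proposition \ref{Tor1}, and the Gabriel--Stenstr\"om theorem on flat ring epimorphisms (Theorem \ref{perfect loc}). First I would start with an arbitrary bireflective extension-closed subcategory $\Xcal$ of $\ModA$ and apply Theorem \ref{bireflective} to produce a ring epimorphism $f\colon A\longrightarrow B$ with $\Tor{1}{A}{B}{B}=0$ such that $\Xcal$ is the essential image of the restriction functor $f_*\colon \ModB\longrightarrow \ModA$. In particular, $f_*$ gives an equivalence between $\ModB$ and $\Xcal$.

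The second step is to invoke Proposition \ref{Tor1}, which uses the noetherian hypothesis on $A$ in an essential way: the vanishing $\Tor{1}{A}{B}{B}=0$ is upgraded to flatness of $B$ as a left $A$-module. This is the step where the hypothesis on $A$ enters the argument.

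Finally, since $f$ is a flat ring epimorphism, Theorem \ref{perfect loc} produces a localising sequence $(\mathcal{L}_{\Tcal_f})$ for the hereditary torsion class $\Tcal:=\Tcal_f=\Ker(-\otimes_A B)$, in which the inclusion $i_*\colon \ModA/\Tcal\longrightarrow \ModA$ is naturally equivalent to $f_*\colon \ModB\longrightarrow \ModA$. Composing the resulting equivalence $\ModA/\Tcal\simeq\ModB$ with $\ModB\simeq\Xcal$ yields $\Xcal\simeq\ModA/\Tcal$, as required.

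Since every ingredient is already in place, no new obstacle arises; the only subtle point worth emphasising in the proof is that the noetherian hypothesis is used exclusively through Proposition \ref{Tor1}, and that without it a bireflective extension-closed subcategory need not be of the form $\ModA/\Tcal$, because a ring epimorphism with $\Tor{1}{A}{B}{B}=0$ may fail to be flat in the non-noetherian setting.
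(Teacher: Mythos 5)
Your proof is correct and follows the paper's approach exactly: the paper's one-line proof states that the corollary ``follows from combining the proposition above with Theorems \ref{bireflective} and \ref{perfect loc},'' and your argument simply spells out that chain in detail, with the noetherian hypothesis entering precisely through Proposition~\ref{Tor1}.
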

\begin{proof}
This follows from combining the proposition above with Theorems \ref{bireflective} and \ref{perfect loc}.
\end{proof}

Next, we are aiming for a restricted version of Theorem \ref{Gabriel's-Neeman's Classification} identifying the smashing subcategories of $\Dcal(A)$ and the specialisation closed subsets of $\Spec A$ that correspond to flat ring epimorphisms.

\begin{theorem}\label{flat epis}
Let $A$ be a commutative noetherian ring and let $V$ be a specialisation closed subset of $\Spec A$ with associated smashing subcategory $\Scal_V$. The following statements are equivalent.
\begin{enumerate}
\item $\Scal_V\,^\perp$ is closed under taking cohomologies.
\item $H_V^{k}(A)=0$ for all $k>1$.
\item There is a flat ring epimorphism $f\colon A\longrightarrow B$ such that $\Scal_V=\Ker(-\otimes_A^{\mathbb{L}}B)$ and $\Scal_V\,^\perp\cong\Dcal(B)$.
\item The modules supported in $\Spec A\setminus V$ form an exact abelian extension-closed subcategory of $\ModA$.
\end{enumerate}
In particular, if $V$ satisfies the conditions above, then the minimal primes of $V$ have height zero or one.
\end{theorem}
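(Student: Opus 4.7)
The plan is to run the cycle $(1)\Rightarrow(2)\Rightarrow(3)\Rightarrow(1)$, prove $(3)\Leftrightarrow(4)$ separately, and extract the height bound from $(2)$. All computations take place inside the recollement~\eqref{recollement for S_V} for $\Scal_V$, whose key feature (recalled just before Example~\ref{quick way to see support}) is $j_!j^*\cong\mathbb{R}\Gamma_V$, so that the reflection triangle $\mathbb{R}\Gamma_V(A)\to A\to i_*i^*(A)\to\mathbb{R}\Gamma_V(A)[1]$ governs everything.

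For $(1)\Rightarrow(2)$, I take cohomology in this triangle; since $A$ is concentrated in degree $0$, for each $k\ge 1$ one obtains $H^k(i_*i^*(A))\cong H^{k+1}_V(A)$. These modules lie in $\Tcal_V\subseteq\Scal_V$, while $(1)$ places them in $\Scal_V^\perp$, so they vanish. For $(2)\Rightarrow(3)$ the same triangle now shows $i_*i^*(A)$ is concentrated in degree $0$, so $B:=H^0(i_*i^*(A))$ is an honest $A$-module fitting into the exact sequence $0\to\Gamma_V(A)\to A\to B\to H^1_V(A)\to 0$. Since the smashing localisation $i_*i^*$ is compatible with the tensor structure, $B$ carries a canonical commutative ring structure making $A\to B$ a ring map, and the idempotent identity $B\otimes_A^{\mathbb{L}}B\cong i_*i^*(B)=B$ yields both $B\otimes_A B\cong B$ (so $A\to B$ is a ring epimorphism) and $\Tor{i}{A}{B}{B}=0$ for $i\ge 1$. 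Flatness then follows from Proposition~\ref{Tor1}, and $\Scal_V=\Ker{(-\otimes_A^{\mathbb{L}}B)}$ is immediate from Remark~\ref{remark Gabriel's-Neeman's Classification}. Finally $(3)\Rightarrow(1)$ is painless: when $f\colon A\to B$ is flat, $f_*\colon\Dcal(B)\to\Dcal(A)$ is exact and identifies $\Scal_V^\perp$ with (the image of) $\Dcal(B)$, whose cohomologies are $B$-modules and hence lie in $\Scal_V^\perp$.

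For $(3)\Rightarrow(4)$: if $M$ is an $A$-module with $\supp M\subseteq\Spec A\setminus V$, the triangle $\mathbb{R}\Gamma_V(M)\to M\to M\otimes_A B$ forces $\mathbb{R}\Gamma_V(M)=0$ (its support lies in $V\cap(\Spec A\setminus V)=\emptyset$), so $M\cong M\otimes_A B$ is a $B$-module; thus the class in $(4)$ coincides with $\ModB\hookrightarrow\ModA$, which is visibly exact abelian and extension-closed. For $(4)\Rightarrow(1)$, write $\Xcal$ for the class in $(4)$ and set $\Ycal:=\{Y\in\Dcal(A):H^k(Y)\in\Xcal\text{ for all }k\}$. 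Using the long exact cohomology sequence together with the hypotheses on $\Xcal$ (and the obvious closure of $\Xcal$ under coproducts and summands, which follows from $(-)\otimes_A^{\mathbb{L}}k(\p)$ commuting with $\oplus$), $\Ycal$ is a localising subcategory of $\Dcal(A)$. Since every residue field $k(\p)$ with $\p\in\Spec A\setminus V$ belongs to $\Xcal$, Remark~\ref{remark Neeman} gives $\Scal_V^\perp\subseteq\Ycal$; the reverse inclusion is the standard fact $\supp Y\subseteq\bigcup_k\supp H^k(Y)$. Hence $\Scal_V^\perp=\Ycal$ is tautologically closed under cohomologies, which is $(1)$.

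For the final assertion, let $\p$ be minimal in $V$. Since $V$ is specialisation closed, $V\cap\Spec{A_\p}=\{\p A_\p\}=V(\p A_\p)$. The flat base change in Corollary~\ref{properties of H_V} applied to the localisation $A\to A_\p$ transports $(2)$ to $H^k_{\p A_\p}(A_\p)=0$ for all $k>1$, while Grothendieck's non-vanishing theorem (Remark~\ref{rem Grothendieck}) gives $H^{\htt{\p}}_{\p A_\p}(A_\p)\neq 0$, forcing $\htt{\p}\le 1$. I expect the hardest step to be $(2)\Rightarrow(3)$: one must promote the triangulated reflection $i_*i^*(A)$ to an honest commutative $A$-algebra whose induced smashing kernel realises $\Scal_V$. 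Once this idempotent-algebra picture is in place, every other implication reduces to a diagram chase with local cohomology triangles.
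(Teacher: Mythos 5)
Your proposal is correct and stays within the same conceptual framework as the paper (recollement for $\Scal_V$, the reflection triangle $\mathbb{R}\Gamma_V A\to A\to i_*i^*A$, local cohomology, and Grothendieck non-vanishing). The implications $(1)\Rightarrow(2)$, $(3)\Rightarrow(4)$, and the height bound are essentially identical. You differ from the paper in two spots, both in ways that work but deserve comment.

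For $(2)\Rightarrow(3)$, the paper invokes Nicol\'as--Saor\'in \cite{NS1} to identify the recollement as coming from a homological epimorphism of dg algebras $A\to B$ with $B\simeq i_*i^*A$, then observes that $B$ is cohomologically concentrated in degree $0$. You instead appeal directly to the (lax symmetric) monoidal behaviour of a smashing localisation on $\Dcal(A)$: $i_*i^*(-)\simeq -\otimes^{\mathbb{L}}_A i_*i^*A$, so the idempotent object $i_*i^*A$ is a commutative monoid in $\Dcal(A)$; once its cohomology is concentrated in degree $0$ it is an honest commutative ring and the idempotent identity gives both the epimorphism property and the $\Tor$-vanishing. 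This is a clean equivalent route, though it quietly uses that $\Dcal(A)$ is a tensor-triangulated category and that smashing localisations are compatible with the monoidal structure; the paper's citation packages exactly this.

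For $(4)\Rightarrow(1)$, the paper simply cites \cite[Theorem 1.1]{K}, whereas you give a self-contained argument via Neeman's classification. Your argument is correct, but you label the key containment $\supp Y\subseteq\bigcup_k\supp H^k(Y)$ as ``the standard fact'' without justification; this is worth a sentence, because the \emph{opposite} containment $\supp H^k(Y)\subseteq\supp Y$ is precisely the property that fails when $(1)$ fails, so a reader could easily be led astray. The containment you use does hold for every $Y\in\Dcal(A)$ over a commutative noetherian ring; the quickest argument is to localise at $\p$, represent $\mathbb{R}\Gamma_{\p A_\p}$ by the (bounded, flat) extended \v{C}ech complex on a finite generating set of $\p A_\p$, and run the resulting convergent spectral sequence $H^p_{\p A_\p}(H^q(Y_\p))\Rightarrow H^{p+q}(\mathbb{R}\Gamma_{\p A_\p} Y_\p)$, noting that $\p\in\supp Y$ iff $\mathbb{R}\Gamma_{\p A_\p} Y_\p\ne 0$. (Alternatively, one can avoid the fact entirely: once $\Ycal$ is localising, Remark~\ref{remark Neeman} pins $\Ycal$ down as the unique localising subcategory whose support is $\Spec A\setminus V$, namely $\Scal_V^\perp$, since $k(\p)\in\Ycal$ precisely for $\p\notin V$.) With that gap filled, your route for $(4)\Rightarrow(1)$ is a genuine, self-contained alternative to the citation of \cite{K}, and your remaining cycle $(3)\Rightarrow(1)$ and $(3)\Leftrightarrow(4)$ is a harmless reorganisation of the paper's $(3)\Rightarrow(4)\Rightarrow(1)$.
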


\begin{proof}
Consider the recollement induced by the smashing subcategory $\Scal_V$ and fix the notation for its functors as in~\eqref{recollement for S_V}.
The units and counits of the adjunctions in the recollement induce a triangle
$$j_!j^*A\la A\la i_*i^*A\la j_!j^*A[1].$$
The long exact sequence of cohomologies associated to this triangle tells us that,  for all $k\geq 1$, $$H^k(i_*i^*A)\cong H^{k+1}(j_!j^*A)=H^{k+1}_V(A).$$

(1)$\Rightarrow$(2): Suppose that $\Scal_V\,^\perp$ is closed under taking cohomologies.  Since $\Tcal_V\subseteq\Scal_V$ by Remark~\ref{remark Gabriel's-Neeman's Classification}, we infer that $H^{k+1}_V(A)$ lies in both $\Scal_V$ and its orthogonal $\Scal_V\,^\perp$ for all $k\geq 1$, that is, $H^k_V(A)=0$ for all $k>1$.

(2)$\Rightarrow$(3): Recall from \cite{NS1} that every recollement of $\Dcal(A)$ is equivalent to one induced by a homological epimorphism of differential graded algebras $f\colon A\longrightarrow B$, where $B$ is identified, as an object in $\Dcal(A)$, with $i_*i^*A$. 
Let us compute the cohomologies of $i_*i^*A$. By definition of local cohomology  $H^k(j_!j^*A)=H_V^k(A)=0$ for all $k<0$, and $H^0(j_!j^*A)=\Gamma_V(A)$ embeds in $A$. It follows from the triangle above that $H^k(i_*i^*A) = 0$ for all ${k}<0$. Now the assumption (2) guarantees that $i_*i^*A$ is an $A$-module and  $f$ is a homological epimorphism of rings up to quasi-isomorphism. Then $f$ is  a flat ring epimorphism by Proposition \ref{Tor1}, and we can invoke Example \ref{example flat ring epi recollement}.

(3)$\Rightarrow$(4): By Remark \ref{remark Neeman}, the complexes supported in $\Spec A\setminus V$ are precisely those in $\Scal_V\,^\perp$. Hence, the $A$-modules supported in $\Spec A\setminus V$ are those in $\Scal_V\,^\perp\cap \ModA$. By assumption (3), these are precisely the $A$-modules in the essential image of the restriction functor $f_*$, which naturally form an exact abelian and extension-closed subcategory of $\ModA$.

(4)$\Rightarrow$(1): This follows from \cite[Theorem 1.1]{K}.

Finally, let $\mathfrak{p}$ be a minimal prime in $V$. In particular, the preimage of $V$ under $\Spec{A_\p} \longrightarrow \Spec{A}$ is just $\{\p A_\p\}$. Then, by Corollary~\ref{properties of H_V}(2), we have $(H^k_V(A))_\p \cong H^k_{\p A_\p}(A_\p)$.
Now, if condition (2) is satisfied, Remark~\ref{rem Grothendieck} guarantees that $\mathfrak{p}$ has height at most one.
\end{proof}

In \cite{K}, the subsets $W\subseteq\Spec A$ satisfying that the modules supported in $W$ form an exact abelian and extension-closed subcategory of $\ModA$ are called \textbf{coherent}. Note that the class of modules supported in any subset $W$ is always extension-closed and that every specialisation closed subset $V$ is coherent (the modules supported there form the hereditary torsion class $\Tcal_V$). Theorem~\ref{flat epis} describes when the complement of a specialisation closed subset is coherent in terms of local cohomology. This may be regarded as a geometric interpretation of coherence.

The following corollary provides a restriction of the bijections  in Theorem \ref{Gabriel's-Neeman's Classification}.

\begin{corollary}\label{coherent complement}
Let $A$ be a commutative noetherian ring. There are bijections between
\begin{enumerate}
\item epiclasses of flat ring epimorphisms $A\longrightarrow B$;
\item smashing subcategories $\Scal$ of $\Dcal(A)$ for which $\Scal^\perp$ is closed under taking cohomologies;
\item specialisation closed subsets of $\Spec A$ with coherent complement.
\end{enumerate}
\end{corollary}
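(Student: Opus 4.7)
The plan is to assemble the three bijections from results already in place, most notably Theorem~\ref{Gabriel's-Neeman's Classification} and Theorem~\ref{flat epis}.

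First I would establish the bijection between (2) and (3). Theorem~\ref{Gabriel's-Neeman's Classification} provides a bijection between smashing subcategories of $\Dcal(A)$ and specialisation closed subsets of $\Spec A$, given by support; write $V \leftrightarrow \Scal_V$ under this correspondence. By Remark~\ref{remark Neeman}, $\Scal_V^\perp$ is exactly the subcategory of complexes whose support lies in $\Spec A \setminus V$; in particular, the $A$-modules in $\Scal_V^\perp$ are precisely those supported in the complement of $V$. The equivalence (1)$\Leftrightarrow$(4) of Theorem~\ref{flat epis} then shows that $\Scal_V^\perp$ is closed under taking cohomologies if and only if the modules supported in $\Spec A \setminus V$ form an exact abelian extension-closed subcategory of $\ModA$, which is exactly the statement that $\Spec A \setminus V$ is coherent. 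Hence the bijection of Theorem~\ref{Gabriel's-Neeman's Classification} restricts to the desired bijection between (2) and (3).

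Next I would construct the bijection between (1) and (2). To a flat ring epimorphism $f\colon A \to B$ assign $\Scal_f := \Ker(-\otimes_A^{\mathbb L} B)$; by Example~\ref{example flat ring epi recollement}, $\Scal_f$ is smashing and $\Scal_f^\perp$ identifies with $\Dcal(B)$ via the fully faithful restriction functor $f_*$. Since $f_*$ is exact and commutes with cohomologies, $\Scal_f^\perp$ is closed under taking cohomologies in $\Dcal(A)$. Equivalent ring epimorphisms produce equal derived kernels, so the assignment factors through epiclasses. Surjectivity onto (2) is essentially the implication (1)$\Rightarrow$(3) of Theorem~\ref{flat epis}: given a smashing $\Scal$ with $\Scal^\perp$ closed under cohomologies, the specialisation closed subset $V$ with $\Scal=\Scal_V$ satisfies condition (2) of Theorem~\ref{flat epis}, and condition (3) produces a flat ring epimorphism $f\colon A \to B$ with $\Scal_f = \Scal_V = \Scal$.

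For injectivity on epiclasses I would argue as follows: if $f_1\colon A \to B_1$ and $f_2\colon A \to B_2$ are flat ring epimorphisms with $\Scal_{f_1} = \Scal_{f_2}$, then intersecting with $\ModA$ yields $\Scal_{f_i}^\perp \cap \ModA = \Xcal_{B_i}$, the bireflective subcategory which is the essential image of the restriction functor, by Example~\ref{example flat ring epi recollement} and Theorem~\ref{flat epis}(3). Consequently $\Xcal_{B_1} = \Xcal_{B_2}$, and Theorem~\ref{bireflective} forces $f_1$ and $f_2$ to be equivalent. The step that most deserves careful unpacking—and thus the main (mild) obstacle—is verifying that $\Scal_f^\perp \cap \ModA$ really recovers $\Xcal_B$; this follows from the fact that the recollement of Example~\ref{example flat ring epi recollement} identifies $\Scal_f^\perp$ with $\Dcal(B)$ via $f_*$, so an $A$-module lies in $\Scal_f^\perp$ exactly when it is the restriction of a $B$-module, which by Theorem~\ref{bireflective} is the characterisation of membership in $\Xcal_B$.
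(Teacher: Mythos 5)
Your argument is correct and follows essentially the same route as the paper, which simply notes that the corollary is a direct consequence of Theorems~\ref{Gabriel's-Neeman's Classification} and \ref{flat epis}. You have spelled out the details that the paper leaves implicit -- in particular, the identification $\Scal_f^\perp\cap\ModA=\Xcal_B$ and the appeal to Theorem~\ref{bireflective} for injectivity on epiclasses -- and these steps are all sound.
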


\begin{proof}
This is a direct consequence of Theorems~\ref{Gabriel's-Neeman's Classification} and \ref{flat epis}.
\end{proof}

\begin{example}\label{example support for classical loc}
Let $\{x_\lambda\}_{\lambda\in\Lambda}$ be a family of elements of $A$. The set $V=\bigcup_{\lambda\in\Lambda}\V(x_\lambda)$ is a specialisation closed subset of $\Spec A$ with coherent complement that corresponds to the classical localisation of $A$ at the multiplicative subset $S$ generated by the $x_\lambda$ (see also~\cite[\S4]{K}). Indeed, a prime ideal $\p$ lies in $\supp{A_S}$ if and only if $A_S\otimes_A k(\p) \ne 0$ if and only if $\p\cap\{x_\lambda\}_{\lambda\in\Lambda} = \emptyset$.
\end{example}

\begin{example}\label{example non-coherent puncture}
Let $k$ be a field, $A=k[[X,Y]]$ and consider the specialisation closed subset $V=\{\mathfrak{m}\}$ formed by the unique maximal ideal $\mathfrak{m}=(X,Y)$ in $A$. Since $\mathfrak{m}$ has height 2, by Theorem~\ref{flat epis}, $\Spec A\setminus \{\mathfrak{m}\}$ is not coherent (see also \cite{K}) and the recollement induced by the smashing subcategory $\Scal_V$ does not arise from a flat ring epimorphism. 
\end{example}

\begin{example}\label{example two planes}
Let $k$ be a field, and let $A=k[[X,Y,U]]/(XU)$ be a quotient of a formal power series ring.
Then $A$ is a $2$-dimensional complete local hypersurface with maximal ideal $\m=(X,Y,U)$.
The ideal $\p=(X,Y)$ is a prime ideal of $A$ of height $1$, but we claim that $\Spec A\setminus V(\p)$ is not coherent. To see that, consider a hypothetical flat ring epimorphism $f\colon A \longrightarrow B$ with $\supp B = \Spec{A}\setminus V(\p)$. If we put $\overline{A} = A/(U) \cong k[[X,Y]]$ and $\overline{B} = B\otimes_A \overline{A} \cong B/(f(U))$, then $\overline{f}\colon \overline{A} \longrightarrow \overline{B}$ is a flat ring epimorphism and, by Remark~\ref{quick way to see support}, $\supp{\overline{B}} = \Spec{\overline{A}}\setminus \{\overline{\p}\}$, where $\overline{\p} = \p+(U)/(U)$ is the maximal ideal of $\overline{A}$. However, such a flat ring epimorphism cannot exist by Example~\ref{example non-coherent puncture}.
\end{example}

%%%%%%%%%%%%%%%%%%%%%%%%%%%%%%%%%%%%%%%%%%%%%%%%%%%%%%%%%%%%%%%%%

\section{Flat ring epimorphisms and universal localisations: Comparisons}
We have seen that universal localisations yield flat ring epimorphisms over any commutative ring $A$. In this section, we assume furthermore that $A$ is noetherian and we approach the question of when the converse is true.

After some preliminaries, we start in \S\ref{subsection versus} by showing that all universal localisations of $A$ are classical rings of fractions whenever the Picard group $\Pic{A}$  is torsion. We exhibit examples showing that this is not always the case, not even for Dedekind domains. 
 In \S\ref{dimone} we focus on rings of Krull dimension one. For such rings, flat ring epimorphisms coincide with universal localisations.   \S\ref{normal} is devoted to normal rings. We show that the  the divisor class group $\Cl{A}$ and its  quotient $\Cl{A}/\Pic{A}$ by the Picard group determine whether a flat ring epimorphism is a classical ring of fractions or a  universal localisation. In the special case of a locally factorial ring this means that flat ring epimorphisms coincide with universal localisations, and they further coincide with classical rings of fractions if and only if $\Pic{A}$ is torsion. 

We begin with a lemma which will be crucial in this context.

\begin{lemma}\label{suppunivloc}
Let $A$ be a commutative noetherian ring. Let further $f\colon A\longrightarrow B$ be a flat ring epimorphism with associated specialisation closed subset $V$ and let $\Sigma$ be a set of maps between finitely generated projective $A$-modules. Then $f$ is the universal localisation of $A$ at $\Sigma$ if and only if $V=\supp\Sigma$.
\end{lemma}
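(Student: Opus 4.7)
The plan is to combine the universal property of $f_\Sigma$ from Theorem~\ref{uniloc} with the geometric description of $V$ provided in Example~\ref{quick way to see support}: the map $f^\flat \colon \Spec B \to \Spec A$ is a homeomorphism onto $\Spec A \setminus V$, and any $B$-module $M$, viewed as an $A$-module via restriction along $f$, has $A$-support contained in $\Spec A \setminus V$. This last fact holds because for $\p \in V$ one has $B \otimes_A k(\p) = 0$, and for any $B$-module $M$ the flat base change isomorphism $\Tor{i}{A}{M}{k(\p)} \cong \Tor{i}{B}{M}{B \otimes_A k(\p)}$, valid since $B$ is flat over $A$, then gives $\Tor{i}{A}{M}{k(\p)} = 0$ for every $i \ge 0$.

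For the direction $(\Rightarrow)$, assume $f = f_\Sigma$. To prove $\supp \Sigma \subseteq V$, pick $\p \notin V$ and let $\q \in \Spec B$ be the unique prime with $f^\flat(\q) = \p$, so that $A_\p \to B_\q$ is a local homomorphism between local rings. For any $\sigma \in \Sigma$ the map $\sigma \otimes_A B$ is an isomorphism by the $\Sigma$-inverting property of $f_\Sigma$, hence so is its localisation $\sigma_\p \otimes_{A_\p} B_\q$. Since $\sigma_\p$ is a map between finitely generated free $A_\p$-modules and $B_\q \neq 0$, Lemma~\ref{matrix}(2) forces the matrix of $\sigma_\p$ to be square, and then Lemma~\ref{matrix}(1) tells us that its determinant becomes a unit in $B_\q$. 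As $A_\p \to B_\q$ is local, it reflects units, so the determinant is already a unit in $A_\p$, and $\sigma_\p$ is an isomorphism. For the reverse inclusion $V \subseteq \supp \Sigma$, assume for contradiction that $\p \in V \setminus \supp \Sigma$. Then every $\sigma_\p$ is an isomorphism, so the classical localisation $A \to A_\p$ is $\Sigma$-inverting and, by the universal property of $f_\Sigma$, factors as $A \to B \to A_\p$, making $A_\p$ a $B$-module. Since $A_\p$ is flat, its support equals the classical support $\{\p' : \p' \subseteq \p\}$, which contains $\p$; yet the opening paragraph shows that the $A$-support of a $B$-module is disjoint from $V$, contradicting $\p \in V$.

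For the direction $(\Leftarrow)$, assume $V = \supp \Sigma$. First I verify that $f$ is $\Sigma$-inverting: given $\sigma \in \Sigma$ and $\q \in \Spec B$ with $\p := f^\flat(\q) \notin V = \supp \Sigma$, one has $\sigma \otimes_A B_\q \cong \sigma_\p \otimes_{A_\p} B_\q$, which is an isomorphism because $\sigma_\p$ already is; since $B$ is commutative noetherian and $\sigma \otimes_A B$ is an isomorphism at every prime of $B$, it is a global isomorphism. The universal property of $f_\Sigma$ then produces a unique ring homomorphism $\phi \colon A_\Sigma \to B$ with $\phi \circ f_\Sigma = f$. Applying the direction $(\Rightarrow)$ to $f_\Sigma$, which is a flat ring epimorphism by Corollary~\ref{r1}, its associated specialisation closed subset is $\supp \Sigma = V$. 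Hence $f$ and $f_\Sigma$ are two flat ring epimorphisms from $A$ with the same associated specialisation closed subset, and the bijection in Corollary~\ref{coherent complement} forces them to represent the same epiclass; the witnessing isomorphism must coincide with $\phi$ by the uniqueness clause in the universal property of $f_\Sigma$, so $\phi$ is itself an isomorphism and $f$ is equivalent to $f_\Sigma$.

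The main obstacle is the descent step in the inclusion $\supp \Sigma \subseteq V$ of the $(\Rightarrow)$ direction: one must pull back an isomorphism from $B_\q$ to an isomorphism at $A_\p$, although $B_\q$ need not be faithfully flat over $A_\p$. The resolution passes through the observation that $A_\p \to B_\q$ is local and therefore reflects units, combined with the matrix reformulation afforded by Lemma~\ref{matrix}. A secondary delicate point, handled in the opening paragraph, is the support calculation for a restricted $B$-module, where flatness of $B$ over $A$ is essential to ensure vanishing of the higher $\mathrm{Tor}$ groups and not merely of the ordinary tensor product.
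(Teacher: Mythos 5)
Your proof is correct, and it takes a genuinely different route than the paper's. The paper's argument is a two-line reduction to Neeman--Ranicki~\cite{NR}: $f$ is the universal localisation at $\Sigma$ iff the smashing subcategory $\Scal_V = \Ker(-\otimes_A^{\mathbb{L}}B)$ equals $\Loc\Sigma$, and the latter holds iff $V = \supp\Sigma$ by Neeman's classification of localising subcategories. You instead work directly from the universal property of $f_\Sigma$, combined with the local description of a flat epimorphism (Example~\ref{quick way to see support}) and a descent argument through Lemma~\ref{matrix}: for $(\Rightarrow)$, the forward inclusion $\supp\Sigma\subseteq V$ is obtained by pulling isomorphisms back along the local homomorphism $A_\p\to B_\q$ (which reflects units), and the reverse inclusion $V\subseteq\supp\Sigma$ by producing, via the universal property, a $B$-module structure on $A_\p$ whose support would otherwise have to avoid $V$; for $(\Leftarrow)$, you check $\Sigma$-invertibility locally over $B$, invoke the universal property to get a comparison map, and then close the argument with the injectivity of the epiclass-to-support assignment from Corollary~\ref{coherent complement}. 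The trade-off is clear: the paper's proof is shorter but imports the substantial input from~\cite{NR} relating universal localisations to compactly generated localising subcategories; your proof avoids that machinery entirely, relying only on the (already established) bijection of Corollary~\ref{coherent complement}, at the cost of a more hands-on local analysis. Both ultimately lean on Neeman's classification theorem, but through different intermediaries. One small remark: in the final paragraph of your $(\Leftarrow)$ direction, once Corollary~\ref{coherent complement} gives that $f$ and $f_\Sigma$ lie in the same epiclass, you are already done; the observation that the witnessing isomorphism agrees with $\phi$ is true but not needed for the conclusion.
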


\begin{proof}
Let $\Scal_V=\Ker(-\otimes_A^{\mathbb{L}}B)$ be the smashing subcategory associated to $V$. It follows from \cite{NR} that $f$ is the universal localisation of $A$ at $\Sigma$ if and only if $\Scal_V$ identifies with $\Loc\Sigma$, the  smallest localising subcategory of $\Dcal(A)$ containing $\Sigma$. But the latter is further equivalent to $V=\supp\Sigma$.
\end{proof}

\begin{remark}\label{suppclassloc-again}
Similarly, we know from Example~\ref{example support for classical loc} that the universal localisation of $A$ at a set  $\Sigma$ coincides with the  classical localisation at a multiplicative subset $S\subset A$ if and only if $\supp\Sigma=\bigcup_{s\in S}V(s)$.
\end{remark}

%%%%%%%%%%%%%%%%%%%%%%%%%%%%%%%%%%%%%%%%%%%%%%%%%%%%%%%%%%%%%%%%%%%%%%%%%%

\subsection{Universal versus classical localisation.}\label{subsection versus}
In order to provide a sufficient condition for a universal localisation to be a classical ring of fractions, we  establish a relation with invertible ideals.

\begin{proposition}\label{reduction}
Let $A$ be a commutative noetherian ring and $\Sigma$ be a set of maps between finitely generated projective $A$-modules. Then the universal localisation of $A$ at $\Sigma$ coincides with the universal localisation of $A$
\begin{itemize}
\item at a set $\Gamma$ of maps of the form $\gamma\colon A\la J_\gamma$, where $J_\gamma$ is an invertible ideal of $A$;
\item at a set $\Gamma^\prime$ of maps of the form $\gamma\colon J_\gamma\la A$, where $J_\gamma$ is an invertible ideal of $A$.
\end{itemize}
Moreover, if $A$ is a domain and $A_\Sigma\not= 0$, then the set $\Gamma^\prime$ above can be chosen to consist of inclusion maps.
\end{proposition}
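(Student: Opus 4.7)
The plan is to invoke Lemma~\ref{suppunivloc} and reduce everything to matching supports: since both $A_\Sigma$ and any prospective $A_\Gamma$, $A_{\Gamma'}$ are flat ring epimorphisms by Corollary~\ref{r1} and are thus classified by the associated specialisation closed subset $\supp\Sigma$, it suffices to build $\Gamma$ and $\Gamma'$ of the required form with $\supp\Gamma=\supp\Gamma'=\supp\Sigma$, treating each $\sigma\in\Sigma$ one at a time.

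The main tool is the determinant. I would first handle a single $\sigma\colon P\la Q$ with $P$ and $Q$ of the same constant rank $n$ on $\Spec A$: passing to top exterior powers produces $\det\sigma\colon \bigwedge^n P\la \bigwedge^n Q$ between invertible modules, and Cramer's rule applied at each prime $\p$ shows that $\sigma_\p$ is an isomorphism if and only if $\det\sigma_\p$ is, so $\supp\sigma=\supp\det\sigma$. Tensoring with $(\bigwedge^n P)^{-1}$ converts this to $\gamma\colon A\la L$ for the invertible module $L:=(\bigwedge^n P)^{-1}\otimes_A\bigwedge^n Q$, with the same support; by the theorem on fractional ideals, $L$ is isomorphic to an invertible fractional ideal and, after clearing denominators by a regular element, to an invertible ideal $J_\gamma\subseteq A$, as required for $\Gamma$. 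The companion set $\Gamma'$ is obtained by $A$-dualising, since at any prime the invertible modules trivialise and $\gamma$ and $\gamma^\vee\colon L^{-1}\la A$ are then represented by multiplication by the same scalar, hence share support.

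To deal with general $\sigma$ where the ranks of $P$ and $Q$ may vary or differ, I would use that a noetherian $A$ decomposes via finitely many orthogonal idempotents as $A\cong\prod_i A_i$ with $P$ and $Q$ of constant rank on each $\Spec{A_i}$. On components where those ranks coincide, apply the construction above; on a component where they differ, Lemma~\ref{matrix} shows that $\sigma$ is nowhere an isomorphism, so its support there is all of $\Spec{A_i}$, which is captured by the zero map $0\colon A_i\la A_i$ (with $J=A_i$ viewed as the trivial invertible ideal). Reassembling across the product yields a single map $A\la J$ (respectively $J\la A$) of the required form.

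For the moreover assertion, suppose $A$ is a domain and $A_\Sigma\neq 0$. Since $\supp\Sigma$ is specialisation closed and cannot equal $\Spec A$ (otherwise $A_\Sigma=0$), it must avoid the generic point $(0)$, so $\sigma\otimes_A K$ is an isomorphism for each $\sigma\in\Sigma$, where $K$ denotes the field of fractions. This forces the ranks of $P$ and $Q$ to coincide and $\det\sigma\otimes_A K$ to be nonzero, making the dualised map $\gamma^\vee\colon L^{-1}\la A$ nonzero. Any nonzero homomorphism from a torsion-free rank-one module $L^{-1}$ into the domain $A$ is injective, as the induced $K$-linear map $L^{-1}\otimes_A K\la K$ is multiplication by a nonzero scalar and $L^{-1}$ embeds in $L^{-1}\otimes_A K$; its image is then an ideal of $A$ isomorphic to $L^{-1}$, and replacing $L^{-1}$ by this image realises $\gamma^\vee$ as an inclusion. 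I expect the main technical hurdle to be the bookkeeping for the idempotent decomposition in the varying-rank case, together with the local verification of $\supp\sigma=\supp\det\sigma$ via Cramer's rule.
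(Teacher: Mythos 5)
Your proof is correct, but it routes through different machinery than the paper. The paper argues directly from the universal $\Sigma$-inverting property: it checks, for an arbitrary commutative ring homomorphism $f\colon A\to B$, that $f$ inverts $\sigma\colon P\to Q$ (of constant rank $n$) if and only if it inverts $\wedge^n\sigma$, and that $f$ inverts a map between invertible modules if and only if it inverts its twist by $P^*$ (resp.\ $Q^*$); equality of the universal localisations then follows immediately from the defining universal property, without passing through flatness or supports. You instead invoke Corollary~\ref{r1} and Lemma~\ref{suppunivloc} to reduce the statement to the equality $\supp\Gamma=\supp\Gamma'=\supp\Sigma$ and then verify this by localising at primes. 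This is a valid shortcut given the results already established in Section~4–5, but it quietly imports the full strength of Lemma~\ref{suppunivloc} (and hence of the Neeman--Ranicki theory), whereas the paper's argument is essentially self-contained modulo Lemma~\ref{matrix} and the elementary theory of invertible fractional ideals. Your treatment of the non-connected case via an explicit idempotent decomposition (with a zero map $0\colon A_i\to A_i$ accounting for components of mismatched rank) is also spelled out in more detail than the paper's terse \emph{``without loss of generality, we may assume $A$ is connected''} together with the observation that a rank mismatch forces $A_\Sigma=0$. The handling of the \emph{moreover} clause in the two proofs is essentially the same: once one knows $\supp\Sigma\ne\Spec A$, the generic fibre of each $\sigma$ is an isomorphism, so the dualised map $L^{-1}\to A$ is nonzero, hence injective, and one replaces $L^{-1}$ by its image inside $A$.
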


\begin{proof}
Without loss of generality, we may assume that $A$ is connected and, hence, all finitely generated projective modules have a well-defined rank (i.e. localisations of a projective module at prime ideals have constant rank). First, we claim that if $\Sigma$ contains a map $\sigma\colon P\la Q$ between modules of different rank, then the localisation at $\Sigma$ is zero and, in particular, it can be realised as claimed above. Observe that the difference between the ranks of $P_\mathfrak{p}$ and $Q_\mathfrak{p}$ and the fact that $(A_\Sigma)_\mathfrak{p}\otimes_A\sigma_\mathfrak{p}$ is an isomorphism imply by Lemma \ref{matrix} that $(A_\Sigma)_\mathfrak{p}=0$ for all primes $\mathfrak{p}$. Therefore, we have $A_\Sigma=0$.

Hence, we assume that $\Sigma$ contains only maps between projective modules of the same rank. Given a map $\sigma\colon P\la Q$ between modules of rank $n$, we claim that a homomorphism of commutative rings $f\colon A\longrightarrow B$ inverts $\sigma$ if and only if it inverts the $n$-fold exterior product $\wedge^n\sigma$. Indeed, this is the case because locally at a prime $\mathfrak{p}$, the map $\wedge^n\sigma_\mathfrak{p}$ is the multiplication by the determinant of $\sigma_\mathfrak{p}$, and the matrix $B_\mathfrak{p}\otimes_{A_\mathfrak{p}}\sigma_\mathfrak{p}\in M_{n\times n}(B_\mathfrak{p})$ is invertible if and only if so is its determinant (which is $f_\mathfrak{p}(\det(\sigma_\mathfrak{p}))$).
We conclude that the universal localisation at $\Sigma$ coincides with the universal localisation at $\wedge\Sigma:=\{\wedge^{\rk{P}}\sigma, (\sigma\colon P_\sigma\la Q_\sigma)\in\Sigma\}$.

Without loss of generality we may then assume that $\Sigma$ is a set of maps between invertible $A$-modules. Note that for any $A$-module $M$, and for any $\sigma\colon P\la Q$ in $\Sigma$, the map $M\otimes_A\sigma$ is an isomorphism if and only if $M\otimes_A (\sigma\otimes_A P^\ast)$ is an isomorphism. Hence, we may once again replace the set $\Sigma$ by the set $\Gamma:=\{\sigma\otimes_AP^\ast: (\sigma\colon P\la Q)\in\Sigma\}$. Finally, observe that, up to isomorphism, each map $\sigma\otimes_AP^\ast$ is of the form $A\la J$ for some invertible ideal $J$ of $A$. Analogously, the universal localisation of $A$ at $\Sigma$ identifies with the universal localisation of $A$ at $\Gamma^\prime:=\{Q^\ast\otimes_A\sigma: (\sigma\colon P\la Q)\in\Sigma\}$.

Assume now that $A$ is a domain. Clearly any non-zero map $\gamma\colon  A\longrightarrow J$ to an invertible ideal $J$ is injective since $J$ is a torsion-free module.
Since the image of $\gamma\otimes_AJ^*$ is an invertible ideal of $A$, the statement follows.
\end{proof}

\begin{theorem}\label{Pic torsion}
Let $A$ be a commutative noetherian ring and $\Sigma$ be a set of maps of the form $\sigma\colon A\la J$, with $J$ an invertible ideal of $A$. If the isoclass of each $J$ is a torsion element in $\Pic{A}$, then the universal localisation at $\Sigma$ is a classical ring of fractions. In particular, if $\Pic{A}$ is torsion, then every universal localisation is a classical ring of fractions.
\end{theorem}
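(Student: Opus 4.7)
\medskip

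\noindent\textbf{Proof plan.} By Corollary~\ref{r1}, the universal localisation $f_\Sigma\colon A\to A_\Sigma$ is a flat ring epimorphism, so by Corollary~\ref{coherent complement} it is determined up to equivalence by its associated specialisation closed subset of $\Spec A$, which by Lemma~\ref{suppunivloc} equals $\supp\Sigma=\bigcup_{\sigma\in\Sigma}\supp\sigma$. Likewise, by Example~\ref{example support for classical loc}, the classical localisation at a multiplicative subset $S\subseteq A$ corresponds to the specialisation closed subset $\bigcup_{s\in S}V(s)$. The strategy is therefore to exhibit, from the torsion hypothesis on each $J_\sigma$, an element $s_\sigma\in A$ with $\supp\sigma=V(s_\sigma)$, and then to take $S$ to be the multiplicative subset generated by $\{s_\sigma:\sigma\in\Sigma\}$.

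\smallskip

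For a fixed $\sigma\colon A\longrightarrow J$ with $[J]$ torsion in $\Pic A$, pick $n\geq 1$ and an isomorphism $\varphi\colon J^{\otimes n}\xrightarrow{\sim} A$, and set $s_\sigma:=\varphi\circ\sigma^{\otimes n}(1)\in A$, so that $\varphi\circ\sigma^{\otimes n}\colon A\longrightarrow A$ is multiplication by $s_\sigma$. The first key step is to check that $\supp\sigma=V(s_\sigma)$. For this, localise at a prime $\p$: since $J$ is invertible, $J_\p\cong A_\p$, and under such a trivialisation $\sigma_\p$ becomes multiplication by some $t\in A_\p$, while $(\sigma^{\otimes n})_\p$ becomes multiplication by $t^n$ (up to the unit obtained from $\varphi_\p$, which is irrelevant for invertibility). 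Since in the local ring $A_\p$ the element $t$ is a unit if and only if $t^n$ is, we conclude that $\sigma_\p$ is an isomorphism if and only if $s_\sigma\notin\p$, which gives $\supp\sigma=V(s_\sigma)$.

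\smallskip

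With this in hand, let $S$ be the multiplicative subset of $A$ generated by $\{s_\sigma:\sigma\in\Sigma\}$. Using the elementary identity $V(s_1s_2)=V(s_1)\cup V(s_2)$ one sees that
\[
\bigcup_{s\in S}V(s)=\bigcup_{\sigma\in\Sigma}V(s_\sigma)=\bigcup_{\sigma\in\Sigma}\supp\sigma=\supp\Sigma,
\]
so the flat ring epimorphisms $A\longrightarrow A_\Sigma$ and $A\longrightarrow S^{-1}A$ are associated to the same specialisation closed subset and are therefore equivalent, by Corollary~\ref{coherent complement}. This proves the first assertion.

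\smallskip

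For the ``in particular'' part, suppose $\Pic A$ is torsion and let $A\longrightarrow A_\Sigma$ be an arbitrary universal localisation. By Proposition~\ref{reduction}, we may replace $\Sigma$ by a set $\Gamma$ of maps of the form $A\longrightarrow J_\gamma$ with $J_\gamma$ an invertible ideal of $A$, so that $A_\Sigma\cong A_\Gamma$. Each $[J_\gamma]\in\Pic A$ is torsion by hypothesis, and the first part applies to produce a multiplicative subset realising $A_\Gamma$ as a classical ring of fractions. The main bookkeeping obstacle is precisely the passage $\supp\sigma=V(s_\sigma)$; everything else is a direct application of the correspondences already established in the paper.
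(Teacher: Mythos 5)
Your proof is correct, but it travels a genuinely different route from the paper. The paper proves the first assertion directly from the universal property of universal localisation: it observes that $\sigma^{\otimes n_J}$ can be identified with multiplication by an element of $A$, and then shows that an \emph{arbitrary} ring homomorphism $f\colon A\to B$ inverts $\sigma$ if and only if it inverts $\sigma^{\otimes n_J}$, again by the local argument that in a local ring multiplication by $b$ is invertible if and only if multiplication by $b^{n_J}$ is. This gives $A_\Sigma = A_{\{\sigma^{\otimes n_J}\}}$ as universal localisations on the nose, with no appeal to the support/specialisation machinery, and in particular the paper's argument for this part does not require noetherianity. You instead route through the classification: you establish the (correct, and useful) identity $\supp\sigma = V(s_\sigma)$ by the same local computation, then match specialisation closed subsets via Lemma~\ref{suppunivloc}, Example~\ref{example support for classical loc}, and Corollary~\ref{coherent complement}. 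This is a clean conceptual reformulation, but it invokes the noetherian support theory that the paper's proof avoids at this point. Both proofs rest on the same key local observation (``$t$ is a unit iff $t^n$ is''); what differs is the packaging: universal property versus the bijection with specialisation closed subsets. The handling of the ``in particular'' clause via Proposition~\ref{reduction} agrees with the paper.
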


\begin{proof}
For a map $\sigma\colon A\la J$ in $\Sigma$, let $n_J$ denote the order of $J$ in $\Pic{A}$. Clearly, $J^{\otimes n_J}\cong A$ and, therefore, $\sigma^{\otimes n_J}\colon A^{\otimes n_J}\longrightarrow J^{\otimes n_J}$ can be identified with the multiplication map by an element of $A$. We claim that the universal localisation of $A$ at $\Sigma$ coincides with the universal localisation at $\{\sigma^{\otimes n_J}: (\sigma\colon A\la J)\in \Sigma\}$ and that, therefore, it is an ordinary ring of fractions.

Analogously to the proof of the previous proposition, consider a ring homomorphism $f\colon A\longrightarrow B$ and let us check that locally at any prime, $f$ inverts a map $\sigma\colon A\la J$ as above if and only if it inverts $\sigma^{\otimes n_J}$. Indeed, at a prime $\mathfrak{p}$ in $\Spec A$,
$B_\mathfrak{p}\otimes_{A_\mathfrak{p}}\sigma_\mathfrak{p}$ is given by multiplication by an element $b$ of $B_\mathfrak{p}$ and, thus, $B_\mathfrak{p}\otimes_{A_\mathfrak{p}}\sigma^{\otimes n_J}_\mathfrak{p}$ is given by multiplication by $b^{n_J}$. Hence, $B_\mathfrak{p}\otimes_{A_\mathfrak{p}}\sigma^{\otimes n_J}_\mathfrak{p}$ is invertible if and only if $B_\mathfrak{p}\otimes_{A_\mathfrak{p}}\sigma_\mathfrak{p}$ is invertible, as wanted.
\end{proof}

We will see below that for locally factorial rings  the converse holds true as well: $\Pic{A}$ is torsion if and only if every universal localisation is a classical ring of fractions.

\begin{example} \label{Dedekind Pic torsion expl}
The ring $A=\mathbb{Z}[\sqrt{-5}]$ is a Dedekind domain (hence, locally factorial) which is not a unique factorisation domain. Its Picard group is $\mathbb{Z}/2\mathbb{Z}$ and, thus, every universal localisation is a classical ring of fractions. The latter property is in fact common to all number fields $A$, i.e.\ the rings of integers of finite field extensions of $\mathbb{Q}$, as they are always Dedekind domains with finite (hence torsion) Picard group, \cite[Ch. 5, Corollary 2]{Marcus}.
\end{example}

\begin{example} \label{Dedekind Pic non-torsion expl}
On the other hand, every abelian group occurs as the ideal class group of some Dedekind domain \cite{Cla}, so there are (non-local) Dedekind domains $A$ admitting a universal localisation $A\longrightarrow B$ which is not classical. Such a universal localisation is by~\cite[p.~47]{S} always the inclusion of $A$ to an intermediate ring $A \subseteq B \subseteq K$, where $K$ is the quotient field of $A$ (see Proposition~\ref{flat epis as generalized quotients} and Theorem~\ref{locally factorial} below for more information).
\end{example}

%%%%%%%%%%%%%%%%%%%%%%%%%%%%%%%%%%%%%%%%%%%%%%%%%%%%%%%%%%%%%%%%%%%%%%%%%%

\subsection{Rings of Krull dimension at most one}\label{dimone}
 Next, we prove a classification result for rings satisfying a certain geometric condition, namely having Krull dimension at most one.
The situation is similar to the results obtained in \cite[Theorem 6.1]{KS} for possibly non-commutative hereditary rings and in \cite[Theorem 6.8]{BS} for commutative semihereditary rings.
Note that for rings of Krull dimension at most one, every subset of $\Spec A$ is coherent (see \cite[Corollary 4.3]{K}) and, by Corollary \ref{coherent complement}, epiclasses of flat ring epimorphisms correspond bijectively to specialisation closed subsets of the spectrum. Given a prime ideal $\mathfrak{p}$ in $\Spec A$, we denote by $\Lambda(\mathfrak{p})$ the set of prime ideals of $A$ which are contained in $\mathfrak{p}$. Note that $\Lambda(\mathfrak{p})$ with the induced topology is homeomorphic to $\Spec {A_\mathfrak{p}}$.
 
\begin{theorem}\label{Krull dim one}
Let $A$ be a commutative noetherian ring of Krull dimension at most one and let $V$ be a specialisation closed subset of $\Spec A$. Then there is a set of maps $\Sigma$ between finitely generated projective $A$-modules such that $\supp(\Sigma)=V$. As a consequence, every flat ring epimorphism is a universal localisation.
\end{theorem}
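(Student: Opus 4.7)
The second statement follows from the first by Lemma~\ref{suppunivloc}: since every subset of $\Spec A$ is coherent when $\dim A\leq 1$ (see \cite[Corollary 4.3]{K}), Corollary~\ref{coherent complement} identifies every flat ring epimorphism $A\longrightarrow B$ with some specialisation closed $V\subseteq\Spec A$, and any set $\Sigma$ satisfying $\supp\Sigma=V$ exhibits this epimorphism as the universal localisation at $\Sigma$. So the real content is to construct such a $\Sigma$.

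My plan is to exhibit, for each prime $\q\in V$, a two-term complex $\sigma_\q$ of finitely generated projective $A$-modules with $\q\in\supp\sigma_\q\subseteq V$; the set $\Sigma=\{\sigma_\q:\q\in V\}$ then satisfies $\supp\Sigma=V$. By Proposition~\ref{reduction}, I may restrict to maps of the form $J\hookrightarrow A$ with $J$ an invertible ideal of $A$ (which includes the multiplication maps $A\xrightarrow{a}A$ via the case $J=A$), for which $\supp(J\hookrightarrow A)=V(J)$ (respectively $V(a)$). The dimension hypothesis guarantees that every prime $\q$ has height $0$ or $1$: if $\q$ has height $0$, a multiplication map by a suitable zero-divisor $a\in\q$ supplies $\sigma_\q$; if $\q$ has height $1$, an invertible ideal $J\subseteq\q$ with $V(J)\subseteq V$ does.

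The hard step is producing $\sigma_\q$ in the height-one case when no principal ideal isolates $\q$ from primes outside $V$---this arises, for instance, in a Dedekind domain where the class of $\q$ in the Picard group has infinite order, or in a singular one-dimensional ring where $\q$ is not locally principal. The dimension hypothesis is essential here: the local ring $A_\q$ is one-dimensional, so $\q A_\q$ always contains a non-zero-divisor whose principal ideal is $\q A_\q$-primary, and this local datum can be lifted either to a global principal ideal with the right support (possibly after a finite prime-avoidance correction, since $V(a)$ is finite for any non-zero-divisor $a\in A$) or to a non-principal globally invertible ideal (for instance $\q$ itself in the Dedekind setting, when $\q$ is locally principal everywhere). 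The height-zero case is easier, since the finiteness of the set of minimal primes of $A$ allows a single round of prime avoidance to produce a zero-divisor $a\in\q$ with $V(a)\subseteq V$.
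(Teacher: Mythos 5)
Your reduction of the second sentence to the first is correct, and your overall strategy (reduce to finding, for each minimal prime $\p$ of $V$, an invertible ideal $J$ with $V(J)=V(\p)$, equivalently a map between finitely generated projectives supported exactly on $V(\p)$) matches the paper's. But you have identified exactly where the proof must do work---the height-one prime $\p$ such that no principal ideal isolates $\p$---and then not done that work. The paragraph beginning ``The dimension hypothesis is essential here'' is a description of what would need to happen, not a construction. You say the local datum ``can be lifted either to a global principal ideal with the right support (possibly after a finite prime-avoidance correction\dots) or to a non-principal globally invertible ideal (for instance $\p$ itself in the Dedekind setting\dots)''. Neither alternative is established: the prime-avoidance correction is circular, because the finite set $V(a)\setminus V$ you would need to avoid depends on the choice of $a$ and changes when you change $a$; and ``$\p$ itself'' is not an invertible ideal when $A_\p$ is not a DVR, which is precisely the singular one-dimensional case you raise and then set aside. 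Your treatment of height-zero primes has the same issue: a single round of prime avoidance over the minimal primes of $A$ controls which \emph{minimal} primes lie in $V(a)$, but not which height-one primes are \emph{minimal over} $(a)$, and those can fall outside $V$.

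The paper's key move, which your proposal is missing, is the following. Having chosen $x\in\p$ avoiding the associated primes outside $V(\p)$, the paper does not try to adjust $x$ to make $(x)$ well-supported. Instead it keeps $x$, lists the finitely many bad primes $\m_1,\dots,\m_n$ in $V(x)\setminus V(\p)$ (finitely many because $\dim A\le1$ forces them to be maximal and minimal over $A/xA$), and defines $I_i=\ker\big(A\to (A/xA)_{\m_i}\big)$; the product $I=I_1\cdots I_n$ is then shown to be locally free of rank one, and the map $\sigma\colon A\to I$, $a\mapsto ax$ (not the inclusion $I\hookrightarrow A$!) has $\supp\sigma=V(\p)$. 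Equivalently, in your language, $J=xI^{-1}$ is the invertible ideal you were after. This handles the Dedekind case, the singular case, and the height-zero case uniformly; none of them is treated separately in the paper. Without this construction of $I$, the proposal has a genuine gap at the central step of the proof.
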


\begin{proof}
For each minimal element $\mathfrak{p}$ in the set $V$ we find a map $\sigma$ that is supported exactly on $V(\mathfrak{p})$. Since $V$ is the union of such sets, the theorem will then follow. Let $\mathfrak{p}$ be such a prime ideal. By prime avoidance, we can choose $x$ to be an element of $\mathfrak{p}$ that is not contained in any associated prime that does not lie in $V(\mathfrak{p})$, i.e. we can choose $x$ in $\mathfrak{p}\setminus \bigcup\{\mathfrak{q}\,\mid\,{\mathfrak{q}\in\Ass(A)\setminus V(\mathfrak{p})}\}$.

Suppose that $V(x)=V(\mathfrak{p})$. In this case, we show that the map $\sigma\colon  A\longrightarrow A$ given by multiplication by $x$ is as wanted (see also \cite[Remark 4.2(1)]{K}). Indeed, given a prime ideal $\mathfrak{q}$, if $x$ does not lie in $\q$, then $x$ is invertible in $A_\mathfrak{q}$ and, thus, $\sigma_\mathfrak{q}$ is an isomorphism. If $x$ lies in $\q$, then $x$ lies in the unique maximal ideal of $A_\mathfrak{q}$ and, therefore, $\sigma_\mathfrak{q}$ is not surjective (and, thus, not an isomorphism). This shows that the support of $\sigma$ is precisely $V(x)$, which by assumption is $V(\mathfrak{p})$.

Suppose now that $V(x)\supsetneq V(\mathfrak{p})$. The fact that $A$ has Krull dimension at most one guarantees that all ideals in $V(x)\setminus V(\mathfrak{p})$ are both minimal and maximal prime ideals over $A/xA$. In particular, there are only finitely many such primes.
Let $$V(x)\setminus V(\mathfrak{p})=\{\mathfrak{m}_1,...,\mathfrak{m}_n\}.$$ 
We observe that the localisation map $\pi\colon  A/xA\longrightarrow (A/xA)_{\mathfrak{m}_i}$ is surjective. Indeed, this can be checked locally using the standard fact that for a commutative noetherian ring $R$ and $\mathfrak{a}, \mathfrak{b}\in \Spec R$, we have $R_\mathfrak{a}\otimes_R R_\mathfrak{b}=0$ if and only if $\Lambda(\mathfrak{a})\cap\Lambda(\mathfrak{b})=\emptyset$.
In our setting, it then follows that $\pi_\mathfrak{q}=0$ for any $\mathfrak{q}$ in $\Spec{A/xA}\setminus\{\mathfrak{m}_i\}$, and $\pi_{\mathfrak{m}_i}$ is an isomorphism. So $\pi$ is locally surjective, and thus it is surjective. 

As a consequence, also the composition $A\longrightarrow A/xA\longrightarrow (A/xA)_{\mathfrak{m}_i}$ is surjective. We consider its kernel  $I_i$ together with the short exact sequence
$$0\longrightarrow I_i\longrightarrow A\longrightarrow (A/xA)_{\mathfrak{m}_i}\longrightarrow 0.$$
Since localisations are exact, and $(A/xA)_{\mathfrak{m}_i}\cong  A_{\mathfrak{m}_i}/xA_{\mathfrak{m}_i}$, we have $(I_i)_\mathfrak{q}=A_\mathfrak{q}$ whenever $\mathfrak{q}\neq \mathfrak{m}_i$ and $(I_i)_{\mathfrak{m}_i}=xA_{\mathfrak{m}_i}$. An easy consequence is that the ideals $I_1,..., I_n$ are pairwise coprime. Indeed, if $1\leq i\neq j\leq n$, then $(I_i+I_j)_\mathfrak{q}=A_\mathfrak{q}$ for any $\mathfrak{q}$ in $\Spec A$,  showing that $I_i+I_j=A$. Thus, we conclude that the product of ideals $I:=I_1I_2\dots I_n$ equals the intersection $I_1\cap I_2\cap ... \cap I_n$ and contains the element $x$.

We will show that $I$ is a projective $A$-module and that the map $\sigma\colon A\longrightarrow I$ given by multiplication by $x$ is as wanted. To check the projectivity of $I$ we see that $I$ is locally projective. Indeed, since $I_{\mathfrak{q}}=(I_1)_\mathfrak{q}(I_2)_{\mathfrak{q}}...(I_n)_{\mathfrak{q}}$, it follows that $I_{\mathfrak{q}}=A_\mathfrak{q}$ if $\mathfrak{q}$ does not lie in $\{\mathfrak{m}_1,...,\mathfrak{m}_n\}$, and $I_{\mathfrak{m}_i}=xA_{\mathfrak{m}_i}$. Note that since $\Ass(A_{\mathfrak{m}_i})=\Spec {A_{\mathfrak{m}_i}}\cap\Ass(A)$, by choice of $x$ we have that $x$ does not lie in the union of the ideals in $\Ass(A_{\mathfrak{m}_i})$ and, thus, it is a regular element in $A_{\mathfrak{m}_i}$. This shows that also $I_{\mathfrak{m}_i}$ is projective and, therefore, so is $I$.
It remains to see that the support of $\sigma$ is $V(\mathfrak{p})$. We have the following cases.
\begin{itemize}
\item If $\mathfrak{q}$ does not lie in $V(x)$, then $\sigma_\mathfrak{q}$ is an isomorphism because $x$ is invertible in $A_\q$;
\item If $\mathfrak{q}$ lies in $V(x)\setminus V(\mathfrak{p})$, i.e. $\mathfrak{q}=\mathfrak{m_i}$ for some $i$, then $I=xA_{\mathfrak{m}_i}$ and, since as seen above $x$ is a regular element in $A_{\mathfrak{m}_i}$, it follows that $\sigma_{\mathfrak{m}_i}$ is an isomorphism;
\item If $\mathfrak{q}$ lies in $V(\mathfrak{p})$, then $I_{\mathfrak{q}}=A_{\mathfrak{q}}$ and $xA_{\mathfrak{q}}$ lies in $\mathfrak{p}A_{\mathfrak{q}}\subseteq \mathfrak{q}A_{\mathfrak{q}}$, thus showing that $\sigma_\mathfrak{q}$ is not surjective.
\end{itemize}
\end{proof}

Combining Theorem \ref{Krull dim one} with the results of Section \ref{Section ring epi}, we get the following immediate corollary.
\begin{corollary}\label{one}
If $A$ is a commutative noetherian ring of Krull dimension at most one, then there are bijections between
\begin{enumerate}
\item[(i)] epiclasses of universal localisations of $A$;
\item[(ii)] specialisation closed subsets of $\Spec A$;
\item[(iii)] smashing subcategories of $\Dcal(A)$.
\end{enumerate}
\end{corollary}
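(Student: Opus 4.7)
The plan is to assemble the three ingredients already established in the paper; under the hypothesis $\dim A \leq 1$ no new argument is needed.

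First, I would note that the bijection between (ii) and (iii) is immediate from Theorem \ref{Gabriel's-Neeman's Classification}, which gives such a correspondence (via support) for any commutative noetherian ring with no assumption on the Krull dimension. So the only thing to verify is that the epiclasses in (i) are in bijection with either (ii) or (iii).

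Next, I would identify epiclasses of universal localisations with epiclasses of flat ring epimorphisms. One inclusion is Corollary \ref{r1}: every universal localisation of the commutative ring $A$ is a flat ring epimorphism, so each epiclass in (i) gives rise to an epiclass of flat ring epimorphism. The converse is precisely the content of Theorem \ref{Krull dim one}: when $\dim A \leq 1$, every flat ring epimorphism $A \longrightarrow B$ arises as the universal localisation at some set $\Sigma$ of maps between finitely generated projective $A$-modules. Thus the two notions of epiclass coincide for such rings.

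Finally, I would invoke Corollary \ref{coherent complement}, which identifies epiclasses of flat ring epimorphisms with those specialisation closed subsets of $\Spec A$ whose complement is coherent in the sense of Krause. Since $A$ has Krull dimension at most one, every subset of $\Spec A$ is coherent by \cite[Corollary 4.3]{K} (this is also the fact already used at the beginning of \S\ref{dimone}), so the coherence requirement is automatic and Corollary \ref{coherent complement} specialises to a bijection between (i) and all specialisation closed subsets of $\Spec A$. Chaining this with the bijection from Theorem \ref{Gabriel's-Neeman's Classification} produces the desired three-way correspondence. There is no substantive obstacle here: the statement is a direct bookkeeping consequence of Theorem \ref{Gabriel's-Neeman's Classification}, Corollary \ref{r1}, Corollary \ref{coherent complement}, and Theorem \ref{Krull dim one}.
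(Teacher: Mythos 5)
Your proposal is correct and follows essentially the same route as the paper, which simply says the result follows by "combining Theorem \ref{Krull dim one} with the results of Section \ref{Section ring epi}." You have merely unpacked that remark: the bijection (ii)$\leftrightarrow$(iii) is Theorem \ref{Gabriel's-Neeman's Classification}, and (i)$\leftrightarrow$(ii) comes from identifying universal localisations with flat ring epimorphisms via Corollary \ref{r1} and Theorem \ref{Krull dim one}, and then applying Corollary \ref{coherent complement} together with the observation (already recorded before Theorem \ref{Krull dim one}) that in Krull dimension at most one every subset of $\Spec A$ is coherent.
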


\begin{remark}
Note that the theorem above also provides a refinement of the telescope conjecture for rings of Krull dimension one. Indeed, the telescope conjecture is known to hold for commutative noetherian rings: all smashing subcategories are compactly generated by \cite{N}. Now we see  that in Krull dimension one the set of compact generators can be chosen to consist of 2-term complexes.
\end{remark}

%%%%%%%%%%%%%%%%%%%%%%%%%%%%%%%%%%%%%%%%%%%%%%%%%%%%%%%%%%%%%%%%%

\subsection{Normal rings}\label{normal}
We turn to a class of rings where we have additional information about the divisors and their groups.
Recall that a commutative noetherian ring $A$ is said to be \textbf{normal} if every localisation of $A$ at a prime is an integrally closed domain.

\begin{remark}\label{properties of a normal ring}
\hfill
\begin{enumerate}
\item Regular rings are locally factorial (the localisations at primes are unique factorisation domains by Auslander-Buchsbaum's theorem). Moreover, locally factorial rings are normal since every unique factorisation domain is integrally closed. None of the implications is reversible.
\item A finite product of normal rings is normal and every normal ring is a finite product of normal domains. If a ring $A$ is normal, then so are $A[X]$ and $A[[X]]$.
\item A ring of Krull dimension one is normal if and only if it is regular. More generally, if $A$ is normal, then for every prime $\p$ of height one, the ring $A_\mathfrak{p}$ is regular (so a discrete valuation domain).
\item If a ring $A$ is Cohen-Macaulay, then $A$ is normal if and only if $A_\p$ is regular for every prime $\mathfrak{p}$ of height one, see~\cite[Theorem 11.5]{E} and the discussion before it. In other words, a Cohen-Macaulay ring is normal if and only if it is regular in height one (the singular locus lives in height at least 2). Recall also that complete intersection rings and, more generally, Gorenstein rings are Cohen-Macaulay.
\end{enumerate}
\end{remark}

For commutative noetherian normal rings, we know more about the maps in \eqref{diagram of divisors}.

\begin{theorem}\cite[Theorem 11.8b and Proposition 11.11]{E}\label{relation groups} If $A$ is normal, then $\rm{div}$ and $\overline{\rm{div}}$ are injective. If, moreover, $A$ is locally factorial, then both $\rm{div}$ and $\overline{\rm{div}}$ are in fact isomorphisms.
\end{theorem}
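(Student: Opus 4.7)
My plan is to exploit the fundamental intersection formula for normal rings: $A = \bigcap_{\mathrm{ht}(\p)=1} A_\p$ inside the total ring of fractions $K$, and more generally any reflexive $A$-module $M$ satisfies $M = \bigcap_{\mathrm{ht}(\p)=1} M_\p$ inside $M \otimes_A K$. Since finitely generated projective modules are reflexive, this applies in particular to every invertible fractional ideal.

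For injectivity of $\di$, I would take an invertible fractional ideal $I$ with $\di(I)=0$ and argue locally. For each height one prime $\p$, normality makes $A_\p$ a discrete valuation domain, so $I_\p = \pi_\p^{n} A_\p$ for a unique $n = v_\p(I) \in \Z$. The formula defining $\di$ identifies this exponent with the coefficient of $\p$ in $\di(I)$: one verifies this on integral ideals using $\ell(A_\p/I_\p) = n$ and then extends by additivity to general fractional ideals via the presentation $I = (gI)\cdot(gA)^{-1}$. Hence $\di(I)=0$ forces $I_\p = A_\p$ at every height one prime, and the intersection formula yields $I = \bigcap_\p I_\p = \bigcap_\p A_\p = A$, i.e.\ $I$ is trivial in $\C{A}$. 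Injectivity of $\overline{\di}$ follows by chasing diagram~\eqref{diagram of divisors}: if $\overline{\di}([I])=0$, then $\di(I) = \di(s)$ for some $s \in K^\times$, so $\di(s^{-1}I)=0$ and the previous step gives $s^{-1}I = A$, whence $I \cong sA \cong A$ in $\Pic{A}$.

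For the locally factorial case, I plan to deduce surjectivity of $\di$ by showing every generator of $\Div{A}$ lies in its image. Given a height one prime $\p$ of $A$ and an arbitrary prime $\q$, the ring $A_\q$ is a unique factorisation domain by hypothesis; when $\p \subseteq \q$ the ideal $\p A_\q$ is a height one prime of the UFD $A_\q$, hence principal, and when $\p \not\subseteq \q$ one simply has $\p A_\q = A_\q$. Thus $\p$ is locally principal, i.e.\ an invertible $A$-module, and a direct computation gives $\di(\p) = \p$ (using $\ell(A_\p/\p A_\p) = 1$ together with vanishing of the length at every other height one prime). This shows $\di$ is surjective, and surjectivity of $\overline{\di}$ then follows by a routine chase of diagram~\eqref{diagram of divisors} combined with the injectivity already established.

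The hard part will be the intersection formula for reflexive modules over a normal ring: this is the engine of the injectivity argument and ultimately rests on Serre's condition $(S_2)$, which together with $(R_1)$ characterises normality in the Cohen--Macaulay-in-codimension-one sense already used in Remark~\ref{properties of a normal ring}. Once that tool is in place, the remainder of the proof is a mixture of DVR arithmetic, localisation bookkeeping, and a short diagram chase, so I expect no further serious obstacle.
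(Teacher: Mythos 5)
The paper does not prove this theorem; it cites Eisenbud~\cite[Theorem 11.8b and Proposition 11.11]{E}. Your argument is nevertheless correct and is, in outline, the standard textbook proof: reduce to the DVR picture at height-one primes, use the intersection formula to reconstruct $I$ from its valuations, chase diagram~\eqref{diagram of divisors} for $\overline{\di}$, and in the locally factorial case observe that height-one primes are invertible with $\di(\p)=\p$.

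One remark on efficiency: you flag the intersection formula for general reflexive modules over a normal domain (via Serre's $(S_2)$) as the main engine, but for this theorem you only ever apply it to invertible fractional ideals, i.e.\ rank-one projectives, where a much lighter argument suffices. Indeed, the paper's own Lemma~\ref{reconstruct invertible ideal} proves exactly the needed reconstruction $I=\bigcap_{\htt(\p)=1}(\p A_\p)^{n_\p}$ for invertible fractional ideals using only $A=\bigcap_{\htt(\p)=1}A_\p$ together with an associated-primes comparison (supp of $J/I$ misses $\Ass(A/I)$, hence $J/I=0$). You might prefer to route your injectivity argument through that lemma or its proof technique rather than invoking the full $(S_2)$ theory for reflexive modules, which is more than the statement requires. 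Everything else — the DVR bookkeeping identifying $v_\p(I)$ with the coefficient of $\p$ in $\di(I)$, the diagram chase for $\overline{\di}$, the local-principality of height-one primes in the locally factorial case, and the computation $\di(\p)=\p$ — is accurate and complete.
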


Since every normal ring $A$ decomposes into a finite product $A_1\times A_2\times\cdots\times A_n$ of normal domains, $\ModA$ is consequently equivalent to $\ModA_1\times\ModA_2\times\cdots\times\ModA_n$ and $\Spec{A}$ is homeomorphic to the disjoint union $\Spec{A_1}\amalg\Spec{A_2}\amalg\cdots\amalg\Spec{A_n}$. A specialisation closed subset $V\subseteq\Spec{A}$ can thus be identified with an $n$-tuple $(V_1,V_2,\dots,V_n)$, where $V_i = V\cap\Spec{A_i}$ is specialisation closed in $\Spec{A_i}$.
Correspondingly, we have an isomorphism $\Pic{A}\cong\Pic{A_1}\times\Pic{A_2}\times\cdots\times\Pic{A_n}$ as abelian groups, and similar isomorphisms for $\C{A}$, $\Div{A}$ and $\Chow{A}$. Moreover, each ring epimorphism $f\colon A\longrightarrow B$ decomposes into a product of ring epimorphisms $f_i\colon A_i \longrightarrow B_i$. Clearly $f$ is flat if and only if so are all the $f_i$, and the same is true for the property of being a classical or a universal localisation. As an upshot, whenever we ask for some general criteria to compare flat ring epimorphisms with universal or even classical localisations of $A$, we can restrict ourselves to the case when $A$ is a domain.

For a (not necessarily normal) domain $A$ there is a rather easy description of all flat ring epimorphisms. These are certain intermediate ring epimorphisms between $A$ and its quotient field $K$. The following result essentially goes back to Richman~\cite{Rich}.

\begin{proposition}\label{flat epis as generalized quotients}
Let $A$ be a commutative noetherian domain and $f\colon A\longrightarrow B$ be a non-zero flat ring epimorphism corresponding to a specialisation closed subset $V$ of $\Spec{A}$. Then $f$ is equivalent to the inclusion
\[ A \la \bigcap_{\p\in\Spec{A}\setminus V} A_\p \qquad (\subseteq K). \]
If, moreover, $A$ is normal, then also $B$ is normal and $f$ is equivalent to
\[ A \la \bigcap_{\begin{smallmatrix}\p\in\Spec{A}\setminus V \\ \htt(\p) = 1\end{smallmatrix}} A_\p \qquad (\subseteq K). \]
\end{proposition}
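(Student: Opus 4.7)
The plan is to first show that $f$ realises $B$ as an intermediate ring between $A$ and $K$, and then to identify $B$ with the displayed intersection of local rings inside $K$.

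Since $B\neq 0$, Example~\ref{quick way to see support} forces $(0)\notin V$, whence $B\otimes_A K\cong K$; flatness of $B$ over the domain $A$ then gives an injection $B\hookrightarrow B\otimes_A K=K$ extending $f$, so one may identify $A\subseteq B\subseteq K$. Next, for each $\mathfrak{p}\in\Spec{A}\setminus V$, let $\mathfrak{q}\in\Spec{B}$ be the unique preimage of $\mathfrak{p}$ under $f^\flat$. The induced map $A_\mathfrak{p}\longrightarrow B_\mathfrak{q}$ is a flat local ring epimorphism of local rings whose residue-field map $k(\mathfrak{p})\longrightarrow k(\mathfrak{q})$, being a ring epimorphism from a field, is an isomorphism. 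The faithfully flat completion argument used in the proof of Proposition~\ref{Tor1} then yields $A_\mathfrak{p}\cong B_\mathfrak{q}$, and since both appear as subrings of $K$ containing $A$, they agree inside $K$. Thus $B\subseteq B_\mathfrak{q}=A_\mathfrak{p}$ for every $\mathfrak{p}\notin V$, which gives one inclusion.

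For the reverse inclusion, I would take $x\in K$ with $x\in A_\mathfrak{p}$ for every $\mathfrak{p}\notin V$ and consider the denominator ideal $I=\{a\in A\mid ax\in A\}$. The hypothesis on $x$ translates precisely to $V(I)\subseteq V$, so $A/I$ is supported in $V$ and hence belongs to the hereditary torsion class $\Tcal_V=\Ker(-\otimes_A B)$. Tensoring $0\to I\to A\to A/I\to 0$ with the flat module $B$ then forces $IB=B$, and writing $1=\sum a_ib_i$ with $a_i\in I$ and $b_i\in B$, the element $\beta=\sum f(a_ix)b_i\in B$ is seen to equal $x$ in $K$, so $x\in B$. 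I expect this denominator-ideal step, which is where the interaction between flatness and the torsion theory $\Tcal_V$ is truly exploited, to be the main technical point.

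Finally, in the normal case, each $B_\mathfrak{q}\cong A_\mathfrak{p}$ is integrally closed, so $B$ is normal. The height-one formula will then follow from the classical Krull identity $A_\mathfrak{p}=\bigcap_{\mathfrak{p}'\subseteq\mathfrak{p},\,\htt(\mathfrak{p}')=1}A_{\mathfrak{p}'}$ valid for any normal noetherian local domain $A_\mathfrak{p}$ of positive dimension, together with the observation that specialisation-closedness of $V$ means $\mathfrak{p}\notin V$ implies $\mathfrak{p}'\notin V$ for every $\mathfrak{p}'\subseteq\mathfrak{p}$; these two ingredients let me replace $\bigcap_{\mathfrak{p}\notin V}A_\mathfrak{p}$ by the intersection taken only over height-one primes not in $V$.
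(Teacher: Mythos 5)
Your proof is correct, but it takes a genuinely different route from the paper's. The paper dispatches the entire statement by citing Silver (for the identification of $f$ with an inclusion $A\hookrightarrow B'\subseteq K$) and Richman (for the formula $B'=\bigcap_{\p\notin V}A_\p$ and for normality of $B'$), then finishes the height-one version via Eisenbud's Corollary~11.4 applied to $B'$ together with the identification $B'_\q=A_\p$. You instead give a self-contained argument: you deduce $A\subseteq B\subseteq K$ from $(0)\notin V$ and flatness, prove $B_\q=A_\p$ via the faithfully-flat completion trick from Proposition~\ref{Tor1}, and, most substantively, establish the reverse inclusion $\bigcap_{\p\notin V}A_\p\subseteq B$ by the denominator-ideal argument: $V(I)\subseteq V$ puts $A/I\in\Tcal_V=\Ker(-\otimes_AB)$, so $IB=B$, and writing $1=\sum a_ib_i$ recovers $x\in B$. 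That step is essentially a direct reproof of Richman's \S2, and your height-one formula uses Krull's intersection identity on the $A_\p$ together with specialisation-closedness of $V$, rather than on $B'$ as the paper does; both routes are valid, and yours has the advantage of making explicit exactly where flatness and the torsion theory $\Tcal_V$ interact, whereas the paper's version is shorter at the cost of outsourcing the core argument to the literature.
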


\begin{proof}
It follows from \cite[Remark on p.~47]{S} that $f$ is equivalent to the inclusion map from $A$ to a subring $B'$ of $K$ which is isomorphic to $B$.
The formula $B'=\bigcap_{\p\in\Spec{A}\setminus V} A_\p$ follows from \cite[\S2]{Rich}.
If $A$ is normal, so is $B'$ by \cite[\S4]{Rich}. Then $B' = \bigcap_{\q\in\Spec {B'}, \htt(\q)=1} B'_\q$ by \cite[Corollary 11.4]{E}. However, we have that $B'_\q$ coincides with $A_\p$ (for  $\p = A \cap \q$) inside $K$. Moreover, the prime $\p=A \cap \q$ is of height one since the map $\q \mapsto \q \cap A$ identifies by \cite[Theorem 3]{Rich} (or Remark~\ref{quick way to see support}) the spectrum $\Spec{B'}$ with $\Spec{A}\setminus V$, which is closed under generalisation.
\end{proof}

Recall from Theorem~\ref{flat epis} that the minimal primes in the specialisation closed subset associated with a flat ring epimorphism  have height at most one.
Let us first discuss the case of a locally factorial ring $A$. Note that in this case, prime ideals of height one are projective. Indeed, given $\mathfrak{p}$ of height one and a prime ideal $\mathfrak{q}$ containing $\mathfrak{p}$, we have that $\p_\q$ is a height one prime in $A_\q$. Since $A_\mathfrak{q}$ is a unique factorisation domain, $\mathfrak{p}_\mathfrak{q}$ is a principal ideal and, thus, free as an $A_\mathfrak{q}$-module. On the other hand, if $\q$ is a prime ideal not containing $\p$, it is also clear that $\p_\q\cong A_\q$ is a free $A_\q$-module. Hence $\mathfrak{p}$ is projective as an $A$-module.

\begin{theorem}\label{locally factorial}
Let $A$ be a locally factorial commutative noetherian ring. Suppose that $V$ is a specialisation closed subset of $\Spec A$ such that all minimal primes of $V$ have height at most one.
Then there is a set of maps $\Sigma$ between finitely generated projective $A$-modules such that $\supp(\Sigma)=V$. More precisely, we can take $\Sigma=\{\mathfrak{p}\hookrightarrow A:\mathfrak{p}\in {\rm min\ } V\}$.

In particular, every flat ring epimorphism is a universal localisation. Moreover,  every universal localisation is a classical ring of fractions if and only if $\Pic{A}$ is torsion.
\end{theorem}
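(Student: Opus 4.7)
The plan is to treat the three assertions in turn, with the final equivalence being the deepest and requiring the divisor-theoretic machinery of Theorem~\ref{relation groups}.

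First I would verify the construction of $\Sigma$. For each minimal prime $\mathfrak{p}$ of $V$, the inclusion $\sigma_\mathfrak{p}\colon \mathfrak{p}\hookrightarrow A$ must be shown to be a map between finitely generated projective $A$-modules with $\supp \sigma_\mathfrak{p} = V(\mathfrak{p})$. Projectivity is a local question: since every localisation $A_\mathfrak{q}$ is a UFD (hence a domain), $\mathfrak{p}_\mathfrak{q}$ is either $A_\mathfrak{q}$ (when $\mathfrak{p}\not\subseteq\mathfrak{q}$), zero (when $\mathfrak{p}$ has height zero and sits inside $\mathfrak{q}$), or a principal proper ideal of $A_\mathfrak{q}$ (when $\mathfrak{p}$ has height one and sits inside $\mathfrak{q}$, because height one primes in a UFD are principal). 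In all cases $\mathfrak{p}_\mathfrak{q}$ is free, so $\mathfrak{p}$ is projective, and the same case analysis shows that $\sigma_\mathfrak{p}$ fails to be an isomorphism at $\mathfrak{q}$ precisely when $\mathfrak{p}\subseteq\mathfrak{q}$, giving $\supp \sigma_\mathfrak{p} = V(\mathfrak{p})$. The union over the minimal primes of $V$ then yields $\supp \Sigma = V$, since any specialisation closed subset is the union of the Zariski-closures of its minimal primes. The statement that every flat ring epimorphism is a universal localisation now follows: Theorem~\ref{flat epis} guarantees that the specialisation closed subset of any flat ring epimorphism has minimal primes of height at most one, so the above construction applies, and Lemma~\ref{suppunivloc} identifies the epimorphism with the universal localisation at $\Sigma$.

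For the final equivalence, one implication is immediate from Theorem~\ref{Pic torsion}. For the converse I would first reduce to the case where $A$ is a locally factorial domain by using the product decomposition of $A$ into normal (hence locally factorial) domains provided by Remark~\ref{properties of a normal ring}(2). Then, given any height one prime $\mathfrak{p}$ of $A$, I would look at the universal localisation at $\{\mathfrak{p}\hookrightarrow A\}$, whose associated specialisation closed subset is $V(\mathfrak{p})$. Assuming it coincides with the classical localisation at some multiplicative set $S$, Remark~\ref{suppclassloc-again} yields an $s\in S\cap\mathfrak{p}$ satisfying $V(s)\subseteq V(\mathfrak{p})$. Since $V(\mathfrak{p})$ is irreducible with generic point $\mathfrak{p}$, Krull's Hauptidealsatz forces $\mathfrak{p}$ to be the unique minimal prime of $(s)$, so $\di(s)=n\mathfrak{p}=\di(\mathfrak{p}^n)$ in $\Div A$ for some $n\geq 1$. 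Invoking the isomorphism $\C{A}\cong\Div A$ of Theorem~\ref{relation groups}, this gives $sA=\mathfrak{p}^n$, so $[\mathfrak{p}]$ is torsion in $\Pic A$. Since $\Pic A\cong\Chow A$ is generated by the classes of height one primes, $\Pic A$ is torsion.

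The main obstacle, I expect, is precisely this converse direction: translating the geometric hypothesis that a flat ring epimorphism is classical into the algebraic identity $\mathfrak{p}^n=sA$ requires both the combinatorial description of classical localisations from Remark~\ref{suppclassloc-again} and the full strength of the divisor-theoretic isomorphism of Theorem~\ref{relation groups} available on locally factorial rings.
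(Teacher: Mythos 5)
Your proposal is correct and follows essentially the same route as the paper: projectivity of height-one primes via the local UFD property, identification of $V(\p)$ with $\supp(\p\hookrightarrow A)$, Lemma~\ref{suppunivloc} to match flat epimorphisms with universal localisations, Theorem~\ref{Pic torsion} for the easy direction of the final equivalence, and the irreducibility of $V(\p)$ together with the isomorphism $\Pic A\cong\Cl A$ of Theorem~\ref{relation groups} for the converse. The only cosmetic difference is that the paper excludes the trivial case $V=\Spec A$ at the outset (so all relevant primes have height exactly one), whereas your case analysis folds the height-zero possibility into the local argument; both come to the same thing.
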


\begin{proof}
We assume without loss of generality that $A$ is a domain. Excluding the trivial case where $V=\Spec{A}$, we will assume that the minimal primes of $V$ have height precisely one.
Since $A$ is locally factorial, it follows that every such prime ideal is 
projective as an $A$-module.
Thus, the set of inclusion maps $\Sigma=\{\iota_\p\colon\mathfrak{p}{\longrightarrow} A:\mathfrak{p}\in {\rm min\ } V\}$ is indeed a set of maps between finitely generated projective $A$-modules and $\supp \Sigma=V$ since $\supp {\iota_\p}=V(\mathfrak{p})$.

It follows from Lemma~\ref{suppunivloc} that every flat ring epimorphism is a universal localisation. Since the if-part in the last statement is Theorem~\ref{Pic torsion}, it only remains  to show that $\Pic{A}$ is torsion if every universal localisation is a classical ring of fractions. Recall from Theorem \ref{relation groups} that $\Pic{A}\cong \Chow{A}$.  Let $\mathfrak{p}$ be a prime of height one. Since $A$ is locally factorial, $\mathfrak{p}$ is projective, and we may again consider the universal localisation at the inclusion map $\iota_\p\colon\p\longrightarrow A$. By assumption, $A_{\iota_\mathfrak{p}}$ is the localisation at a multiplicative set $S$, hence  $\bigcup_{s\in S} V(s)=\supp\iota_\p=V(\mathfrak{p})$ by Remark~\ref{suppclassloc-again}.
As $\V(\p)$ is irreducible, $\V(\p)=\V(s)$ for some non-zero (hence regular) element $s$ in $S$.
That is, $\p$ is the unique prime ideal of height one containing $s$.
It follows that $\di(s)=n\cdot\mathfrak p$ for some $n>0$ and we have $n\cdot\p=0$ in $\Chow A$.
\end{proof}

Note that for an arbitrary normal domain $A$, we cannot argue like in the theorem above, since a prime $\p$ of height one in $\Spec A$ will not always be projective. We can, however, view $\p$ as an element in ${\rm Div}(A)$ and we can ask if $\p$ (or finite sums of copies of $\p$) can be written as ${\rm div}(I)$ for an invertible ideal $I$ of $A$. This amounts to  asking whether $\p$ is a torsion element in $\Chow A/\Pic A$ rather than in $\Pic{A}$.

Let us return to the pullback diagram of abelian groups from Section \ref{commutative algebra}
\begin{equation}\label{pullback divisors}
\vcenter{
\xymatrix{0\ar[r]&\{sA:s\in K^\times/A^\times\}\ar[r]\ar[d]^{\cong}&\C{A}\ar[r]\ar[d]^{\rm{div}}&\Pic{A}\ar[d]^{\overline{\rm{div}}}\ar[r]&0 \\ 0\ar[r]& {\rm PDiv(A)}\ar[r] & \Div{A}\ar[r]&\Chow{A}\ar[r]&0}
}
\end{equation}
where the maps ${\rm div}$ and $\overline{\rm{div}}$ are injective.
The injectivity of $\di$ allows to reconstruct an invertible fractional ideal from its Weil divisor. The following lemma shows an explicit formula for such a reconstruction. We will use that if $\p$ is a prime ideal of $A$ of height one, then the normality of $A$ implies that $A_\p$ is a discrete valuation domain (Remark \ref{properties of a normal ring}(3)) and, thus, $\p A_\p$ is an invertible ideal of $A_\p$.

\begin{lemma}\label{reconstruct invertible ideal}
Let $A$ be normal and $I$ be an invertible fractional ideal with $\di(I) = \sum_{\p} n_\p\cdot \p$. Then $I = \bigcap_{\p} (\p A_\p)^{n_\p}$, where $\p$ runs over all prime ideals of height one.
If, moreover, $\di(I)$ is effective, then $I$ is an invertible ideal of $A$.
\end{lemma}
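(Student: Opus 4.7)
By the decomposition of a normal ring into a finite product of normal domains (Remark~\ref{properties of a normal ring}(2)), both the statement and all the groups involved split compatibly as products, so I would first reduce to the case where $A$ is a normal noetherian domain with quotient field $K$. In that setting the key structural input is that $A$ is a Krull domain: by \cite[Corollary 11.4]{E} (already invoked in the proof of Proposition~\ref{flat epis as generalized quotients}) one has $A = \bigcap_{\htt(\p)=1} A_\p$ inside $K$, and for every height-one prime $\p$ the local ring $A_\p$ is a DVR (Remark~\ref{properties of a normal ring}(3)).

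Next I would upgrade the intersection formula from $A$ to an arbitrary invertible fractional ideal $I$. Writing $J^{-1}=\{s\in K:sJ\subseteq A\}$ for a fractional ideal $J$, invertibility gives $I\cdot I^{-1}=A$, so $I$ is reflexive: $I=(I^{-1})^{-1}$. Using $A=\bigcap_{\htt(\p)=1}A_\p$ one checks, for any finitely generated $A$-submodule $J$ of $K$, that $J^{-1}=\bigcap_{\htt(\p)=1}(J_\p)^{-1}$, and applying this twice yields
\[
I \;=\; (I^{-1})^{-1} \;=\; \bigcap_{\htt(\p)=1}\bigl((I^{-1})_\p\bigr)^{-1} \;=\; \bigcap_{\htt(\p)=1}\bigl((I_\p)^{-1}\bigr)^{-1} \;=\; \bigcap_{\htt(\p)=1} I_\p,
\]
the last equality holding because $I_\p$ is a non-zero fractional ideal over the DVR $A_\p$, hence automatically reflexive.

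The third step identifies each local factor. For $\p$ of height one, $I_\p$ is an invertible $A_\p$-submodule of $K$ and $A_\p$ is a DVR with maximal ideal $\p A_\p$; therefore $I_\p = (\p A_\p)^{m_\p}$ for a unique integer $m_\p\in\mathbb{Z}$. It remains to identify $m_\p$ with $n_\p$. When $I\subseteq A$, this is immediate from the definition of $\di$ since $\ell(A_\p/(\p A_\p)^{m_\p}) = m_\p$. In general, choose a regular $g\in A$ with $gI\subseteq A$; applying the identification both to the invertible ideal $gI$ and to $gA$ (whose $\p$-adic valuation at $\p$ is computed in the same way) and using $\di(I)=\di(gI)-\di(gA)$ together with $(gI)_\p=g\cdot I_\p$ gives $m_\p=n_\p$ for every height-one $\p$. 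Combined with the previous step this establishes the intersection formula.

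Finally, if $\di(I)$ is effective, i.e.\ $n_\p\ge 0$ for all height-one $\p$, then $(\p A_\p)^{n_\p}\subseteq A_\p$, hence
\[
I \;=\; \bigcap_{\htt(\p)=1}(\p A_\p)^{n_\p} \;\subseteq\; \bigcap_{\htt(\p)=1} A_\p \;=\; A,
\]
so $I$ is an (invertible) ideal of $A$. The main point demanding care is the reflexivity/intersection identity in the second step; once that is in place, everything else is a local computation in a DVR.
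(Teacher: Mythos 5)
Your proof is correct, but it takes a genuinely different route from the paper's. After the common reduction to the domain case, the paper first handles the effective case by comparing lengths $\ell(A_\p/gI_\p)\geq\ell(A_\p/gA_\p)$ to obtain $I\subseteq A$, and then shows $I=J$ for $J=A\cap\bigcap_i(\p_iA_{\p_i})^{n_i}$ by an associated-primes argument ($\Ass(A/I)\subseteq\{\p_1,\dots,\p_s\}$, $I_\p=J_\p$ for all height-one $\p$, hence $J/I=0$). You instead invoke the biduality property of invertible fractional ideals over Krull domains: $I=(I^{-1})^{-1}$, combined with the localisation formula $J^{-1}=\bigcap_{\htt(\p)=1}(J_\p)^{-1}$ and reflexivity of $I_\p$ over the DVR $A_\p$, gives $I=\bigcap_\p I_\p$ directly; you then identify each $I_\p=(\p A_\p)^{n_\p}$ by comparing valuations. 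This is arguably cleaner, as it avoids the associated-primes argument entirely and derives the effective case as a trivial corollary rather than as a preliminary step. The one step you pass over quickly is the identity $(I^{-1})_\p=(I_\p)^{-1}$ in your chain of equalities; this is the standard compatibility of $\Hom{A}{-}{A}$ with localisation for finitely presented modules (or, more directly for your purposes, follows from localising $I\cdot I^{-1}=A$ and uniqueness of inverses over a DVR), and is worth a word of justification, but it is not a gap.
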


\begin{proof}
As explained above, we can assume without loss of generality that $A$ is a domain. We start with the case where $\di(I)$ is effective. By assumption, there is a regular element $g$ in $A$ such that $gI$ is an invertible ideal of $A$ and we have $\di(I)=\di(gI)-\di(gA)$.
For any prime $\p$ in $\Spec A$ of height one, it follows that $\ell(A_\p/gI_\p)\geq \ell(A_\p/gA_\p)$. Since, by assumption, $A_\p$ is a discrete valuation ring, this further implies that $gI_\p\subseteq gA_\p$ and, hence, $I_\p$ is contained in $A_\p$. As a consequence, $I$ is contained in $\bigcap_{\p\in\Spec{A},\htt(\p)=1} A_\p$ which coincides with $A$ by \cite[Corollary 11.4]{E}. 

Now we write $\di(I) = n_1\cdot\p_1+n_2\cdot\p_2+\cdots+n_s\cdot\p_s$ with $\p_1,\dots,\p_s$ pairwise distinct and $n_i\ge 0$ for all $i=1,\dots,s$.
We claim that the prime ideals associated to $A/I$ are all among $\p_1,\dots,\p_s$. Indeed, if $\p$ is associated to $A/I$, then $\p A_\p$ is associated to $A_\p/I_\p$. However, since $I$ is invertible, $I_\p$ is principal and generated by a regular element and, consequently, $\p$ is of height one by~\cite[Theorem 11.5(i)]{E}. Thus $\p=\p_i$ for some $i=1,\dots,s$ by the very definition of the map $\di$. This proves the claim.

Since $I\subseteq I_{\p_i} = (\p_iA_{\p_i})^{n_i}$, we clearly have $I \subseteq J$, where we put $J=A \cap \bigcap_{i=1}^s (\p_i A_{\p_i})^{n_i}$. Moreover, since localisation commutes with finite intersections by left exactness, we have $I_{\p}= J_{\p}$ for each $\p$ of height one. In particular, $\supp{J/I}$ does not intersect the associated primes of $A/I$, which implies that $J/I=0$ by~\cite[Corollary 3.5(b)]{E}. Using the equality $A=\bigcap_{\p\in\Spec{A},\htt(\p)=1} A_\p$ again, we obtain the desired expression $I = \bigcap_{\p\in\Spec{A},\htt(\p)=1} (\p A_\p)^{n_\p}$.

Suppose finally that $I$ is an arbitrary invertible fractional ideal and $g$ in $A$ is such that $gI \subseteq A$. If we write $\di(gA) = \sum_{\p} m_\p\cdot \p$, then $gI = \bigcap_{\p} (\p A_\p)^{m_\p+n_\p}$. Now it suffices to notice that the multiplication by $g^{-1}$ induces an $A$-module automorphism of the total ring of fractions of $A$ which maps $gI$ to $I$ and $(\p A_\p)^{m_\p+n_\p}$ to $(\p A_\p)^{n_\p}$.
\end{proof}

\begin{remark}\label{remark reconstruct invertible ideal}
If $A$ is normal, $I\subseteq A$ is an invertible ideal and $\di(I) = n_1\cdot\p_1+n_2\cdot\p_2+\cdots+n_s\cdot\p_s$, then $I=\bigcap_{i=1}^s \p_i^{(n_i)}$, where $\p_i^{(n_i)}$ is the \textbf{$n_i$-th symbolic power} of $\p_i$, \cite[\S3.9]{E}. Indeed, recall that $\p_i^{(n_i)}$ is defined as the $\p_i$-primary component in the primary decomposition of $\p^{n_i}$ and, by~\cite[\S3.3]{E}, we have $\p_i^{(n_i)} = A\cap(\p_iA_{\p_i})^{n_i}$. We have seen in the proof of Lemma~\ref{reconstruct invertible ideal} that $I = A \cap \bigcap_{i=1}^s (\p_i A_{\p_i})^{n_i} = \bigcap_{i=1}^s (A\cap(\p_i A_{\p_i})^{n_i})$, which gives the desired equality.
\end{remark}

The next example shows that, without the assumption of normality, even the last statement in the lemma above may fail. 

\begin{example}
Consider the ring $A=\mathbb{C}[X,Y]/(Y^2-X^3)$ and its invertible fractional ideal $I$ generated by $Y/X$. We claim that ${\rm div}(I)$ is effective. Since ${\rm div}(I)={\rm div}(YA)-{\rm div}(XA)$, it is enough to check that $\ell(A_\p/YA_\p)\geq \ell(A_\p/XA_\p)$ for all non-zero prime ideals $\p$ in $\Spec A$. But $\ell(A_\p/XA_\p)\not= 0$ is only possible if $X$ lies in $\mathfrak{p}$ or, equivalently, if $\p$ is the prime ideal of $A$ generated by $X$ and $Y$. But in this case, we have $3=\ell(A_\p/YA_\p)>\ell(A_\p/XA_\p)=2$. Therefore, ${\rm div}(I)$ is effective even though $I$ is not an ideal of $A$.
\end{example}

We are now ready to prove the main result of this subsection.

\begin{theorem}\label{thm normal domain}
Let $A$ be a normal commutative noetherian ring and $V$ be a specialisation closed subset of $\Spec A$ such that all minimal primes of $V$ have height at most one. Then the following holds.
\begin{itemize}
\item If all minimal primes $\p$ in $V$ which are of height exactly one are torsion in $\Chow A/\Pic A$, then there exists a universal localisation $f\colon A\longrightarrow B$ with associated specialisation closed subset $V$. Conversely, if $\p$ is a prime ideal of height one and $f\colon A\longrightarrow B$ is a universal localisation with associated specialisation closed subset $V=V(\p)$, then $\p$ is torsion in $\Chow A/\Pic A$.
\item If all minimal primes $\p$ in $V$ which are of height one are torsion in $\Chow A$, then $f$ as above is a classical localisation. Conversely, if $V=V(\p)$ and $f$ is a classical localisation, $\p$ is torsion in $\Chow A$.
\end{itemize}
\end{theorem}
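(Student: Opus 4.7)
The first step is to reduce to the case where $A$ is a normal domain, using the product decomposition $A=A_1\times\cdots\times A_n$ discussed before the statement. All relevant data (flat/universal/classical localisations, $\Pic$, $\Chow$, specialisation closed subsets, and the torsion conditions on primes) decompose factorwise. Moreover, if $V=\Spec A$, then $(0)\in V$ and the zero ring epimorphism $A\longrightarrow 0$ is simultaneously a classical and a universal localisation associated to $V$; the remaining torsion hypotheses are then vacuous. Thus we may assume $A$ is a normal domain and every minimal prime of $V$ has height exactly one. The overall plan is to prove the four implications by exhibiting, or analysing, an explicit set $\Sigma$ of maps between finitely generated projective $A$-modules with $\supp\Sigma=V$ and invoking Corollary~\ref{r1} and Lemma~\ref{suppunivloc}.

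For the forward direction of the first bullet, each minimal prime $\p$ of $V$ being torsion in $\Chow A/\Pic A$ yields $n_\p>0$ and an invertible fractional ideal $J_\p$ with $\overline{\di}([J_\p])=n_\p\cdot[\p]$. Injectivity of $\overline{\di}$ from Theorem~\ref{relation groups} upgrades this to $\di(J_\p)=n_\p\cdot\p$ in $\Div A$, and effectiveness of this divisor forces $J_\p$ to be an actual invertible ideal of $A$ by Lemma~\ref{reconstruct invertible ideal}. The inclusion $\iota_\p\colon J_\p\hookrightarrow A$ satisfies $\supp\iota_\p=V(J_\p)$, and since $\Ass(A/J_\p)$ consists of height-one primes appearing in $\di(J_\p)$ (as extracted from the proof of Lemma~\ref{reconstruct invertible ideal}), this set is $\{\p\}$, whence $V(J_\p)=V(\p)$. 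Taking $\Sigma=\{\iota_\p\mid\p\in\min V\}$ gives $\supp\Sigma=V$, and $f_\Sigma$ is the desired flat ring epimorphism. For the converse direction, given a universal localisation $f=f_\Sigma$ with associated subset $V(\p)$ and $\p$ of height one, Proposition~\ref{reduction} (applicable because $A$ is a domain and $A_\Sigma\ne0$) lets us take $\Sigma=\{\iota_\gamma\colon J_\gamma\hookrightarrow A\}$ with $J_\gamma$ invertible. Since $V(\p)$ is irreducible and the minimal primes of each $V(J_\gamma)$ are height-one primes appearing in $\di(J_\gamma)$, they must all equal $\p$; consequently $\di(J_\gamma)=n_\gamma\cdot\p$ for each $\gamma$, with $n_\gamma>0$ for at least one $\gamma$, which exhibits $[\p]$ as torsion in $\Chow A/\Pic A$ via $[J_\gamma]\in\Pic A$.

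For the second bullet, if $[\p]$ is torsion already in $\Chow A$, we obtain $n_\p\cdot\p=\di(s_\p)$ with $s_\p\in K^\times$, and effectiveness combined with Lemma~\ref{reconstruct invertible ideal} forces $s_\p\in A$. Substituting the multiplication maps $\cdot s_\p\colon A\longrightarrow A$ for the $\iota_\p$ in the construction above realises $f$ as the classical localisation at the multiplicative set generated by the $s_\p$, since then $\supp\Sigma=\bigcup V(s_\p)=V$. Conversely, if $f$ is a classical localisation at $S$ with associated subset $V(\p)$, then $V(\p)=\bigcup_{s\in S}V(s)$ by Example~\ref{example support for classical loc}; irreducibility of $V(\p)$ produces some $s\in S$ with $V(s)=V(\p)$, and the argument at the end of the proof of Theorem~\ref{locally factorial} then gives $\di(s)=n\cdot\p$ for some $n>0$, so $[\p]$ is torsion in $\Chow A$. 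The main technical obstacle throughout is the tight coupling between $\supp(\iota_\p)$ and $\di(J_\p)$: making the inclusions realise the \emph{exact} specialisation closed subset $V$ rests on the fact that an effective Weil divisor supported only at $\p$ lifts to an invertible ideal whose associated primes are precisely $\{\p\}$, which is exactly what Lemma~\ref{reconstruct invertible ideal} supplies; distinguishing carefully which torsion statement ($\Chow A$ versus $\Chow A/\Pic A$) corresponds to classical versus universal localisation is the other delicate ingredient.
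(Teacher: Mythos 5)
Your proof is correct and follows essentially the same route as the paper's: reduce to a normal domain, realise each $V(\p)$ as $\supp\iota_\p$ for the inclusion of an invertible ideal with $\di = n_\p\cdot\p$ produced via Lemma~\ref{reconstruct invertible ideal}, and run the converse through Proposition~\ref{reduction}. One small slip: the passage from $\overline{\di}([J_\p])=n_\p[\p]$ in $\Chow{A}$ to $\di(J_\p)=n_\p\cdot\p$ in $\Div{A}$ does not follow from injectivity of $\overline{\di}$; it requires the pullback property of diagram~\eqref{pullback divisors} (concretely, replace $J_\p$ by $tJ_\p$ for a suitable $t\in K^\times$ to absorb the principal divisor $\di(J_\p)-n_\p\cdot\p$), which is exactly what the paper invokes at that step.
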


\begin{proof}
We assume without loss of generality that $A$ is a domain and $0$ does not lie in $V$.
In order to show that there exists a universal localisation $f\colon A\longrightarrow B$ corresponding to $V$ it suffices to find for each minimal prime $\mathfrak{p}$ in $V$ a map $\sigma$ between finitely generated projective $A$-modules such that $\supp \sigma=V(\mathfrak{p})$. Let $\p$ be such a prime and suppose that $\p$ is torsion when seen as an element in $\Chow A/\Pic A$. Then there is an invertible fractional ideal $I$ of $A$ whose isoclass is mapped via $\overline{\rm{div}}$ to $n\cdot\p$ in $\Chow A$ for some $n>0$. Using the pullback diagram \eqref{pullback divisors}, it follows that we can choose $I$ in a way such that also ${\rm div}(I)$ equals $n\cdot\p$ in $\Div A$. By Lemma \ref{reconstruct invertible ideal}, this implies that $I$ is an ideal of $A$. Define $\sigma$ to be the inclusion $I\longrightarrow A$. It follows from Remark \ref{div and support} that $\supp \sigma=V(\mathfrak{p})$. Note that if $\p$ is even torsion when seen as an element in $\Chow A$, then the isoclass of $I$ is torsion in $\Pic A$, that is, there is some $m{>0}$ such that $I^m=sA$ is a principal ideal of $A$. Instead of localising at $\sigma$ we can now equivalently localise at the map $A\overset{s}{\longrightarrow}A$ turning $f$ into a classical localisation.

Conversely, suppose that $V=V(\p)$ and that $f$ is the universal localisation of $A$ at a set $\Sigma$ of maps between finitely generated projective $A$-modules whose associated specialisation closed subset is $V$. By Lemma \ref{suppunivloc}, we have $V(\p)=\supp\Sigma$. Since $V(\p)$ is irreducible, there is some map $\sigma$ in $\Sigma$ whose support is precisely $V(\p)$. By Proposition \ref{reduction}, we can assume that $\sigma$ is just the inclusion map of an invertible ideal $I$ into $A$. It then follows that ${\rm div}(I)$ is of the form $n\cdot\p$ for some $n\in\mathbb{N}$, which clearly implies that $\p$ is torsion when seen as an element in $\Chow A/\Pic A$. Note that in case $f$ is even a classical localisation, $I$ is a principal ideal of $A$ and ${\rm div}(I)=n\cdot\p$ becomes zero in $\Chow A$ or, in other words, $\p$ is torsion when seen as an element in $\Chow A$.
\end{proof}

\begin{remark}
When $A$ is locally factorial, the quotient $\Chow A/\Pic A$ will always be zero, and we recover  Theorem \ref{locally factorial}.
\end{remark}

Given a prime $\p$ of height one over a normal domain $A$, we will see in the forthcoming section that the specialisation closed subset $V(\p)$ does not necessarily arise from a flat ring epimorphism $A\longrightarrow B$ or, equivalently, its complement in $\Spec A$ is not necessarily coherent. So far we only know from Theorem \ref{thm normal domain} that whenever $\p$ is torsion when seen as an element in $\Chow A/\Pic A$, then $V(\p)$ must be of the form $\supp \sigma$ for a map $\sigma$ between finitely generated projective $A$-modules. In particular, $V(\p)$ is then associated to a universal localisation of $A$. In order to discuss examples of flat ring epimorphisms that are not universal localisations, we need to find some different criteria to guarantee that a subset of the form $V(\p)$ admits coherent complement in $\Spec A$. We will also deal with this in the next section.

%%%%%%%%%%%%%%%%%%%%%%%%%%%%%%%%%%%%%%%%%%%%%%%%%%%%%%%%

\section{Examples}\label{section examples}

In this section we discuss some examples of noetherian normal domains $A$ and
\begin{enumerate}
\item primes $\p$ of height one such that the complement of $V(\p)$ is not coherent (i.e.\ there is no flat ring epimorphism $A\longrightarrow B$ associated with $V(\p)$), and
\item primes $\p$ of height one such that there is a flat ring epimorphism $A\longrightarrow B$ associated with $V(\p)$, but it is not a universal localisation.
\end{enumerate}

By Theorems~\ref{Krull dim one} and~\ref{locally factorial}, such examples must be at least $2$-dimensional and cannot be locally factorial. In the case where $A$ has Krull dimension exactly two, the singular locus must consist of maximal ideals of height two by normality.

Along the way, we obtain a useful criterion for the existence of flat ring epimorphisms in the case of normal rings of Krull dimension two (Proposition~\ref{prop criterion for coherence HLVT}).

\subsection{Closed subsets with non-coherent complement}

We start out with the first type of examples, i.e.\ primes $\p$ of height one such that the complement of $\V(\p)$ is not coherent. We have, in fact, already seen this phenomenon in Example~\ref{example two planes}, although in that case $A$ was not a domain. We provide a first easy example in a domain of Krull dimension three (a more comprehensive analysis of flat epimorphisms originating from a non-complete version of this ring will be given in Proposition~\ref{proposition 2x2 matrices}).

\begin{example} \label{example non-coh: dimension 3} \cite[Exercise 33]{Hu}
Let $A=k[[X,Y,U,V]]/(XU-YV)$ and $\p = (X,Y)$. Clearly, $A$ is a three-dimensional domain and, as $A/\p \cong k[[U,V]]$, $\p$ is a prime ideal of height one. Moreover, $A$ is normal by Remark~\ref{properties of a normal ring}(4) since it is a complete intersection ring and the singular locus of $A$ consists only of the maximal ideal $\m=(X,Y,U,V)$. We claim that $H^2_\p(A)\ne0$ and, hence, $V(\p)$ does not have coherent complement by Theorem~\ref{flat epis}. Indeed, notice first that $H^3_\p\equiv 0$ since $\p$ has two generators, \cite[Theorem 3.3.1]{BS2}. In particular, $H^2_\p$ is right exact and it suffices to show that $H^2_\p(R)\ne0$, where $R= A/(V) \cong k[[X,Y,U]]/(XU)$. However, $H^2_\p(R) \cong H^2_{\p R}(R)$ by the independence of the base (Proposition~\ref{properties of H_I}(1)), and the latter local cohomology module is non-zero by Example \ref{example two planes}.
\end{example}

There also exist examples of Krull dimension two, but they are more demanding as it turns out that they cannot be localisations of finitely generated algebras over a field. We will explain the reason here as this discussion is also necessary for \S\ref{subsec flat but not loc}.

Recall from \cite[\S32]{Mat} that a commutative noetherian ring $A$ is a \textbf{G-ring}, if the completion morphism $A_\p \longrightarrow \widehat{A_\p}$ is a regular map for each $\p\in\Spec A$. We will not discuss the definition more in detail here, but we rather point out the following classes of G-rings.

\begin{lemma} \label{lemma G-rings}
Finitely generated commutative algebras over a field as well as complete local commutative noetherian rings are G-rings. Furthermore, the class of G-rings is closed under flat epimorphic images (in particular, under classical localisations).
\end{lemma}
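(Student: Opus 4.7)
The plan is to address the two assertions of the lemma separately. For the first (that finitely generated commutative algebras over a field and complete local noetherian rings are G-rings) there is essentially nothing to do beyond invoking the literature: both facts are classical and are covered respectively by \cite[Theorem~32.4]{Mat} (fields are trivially G-rings and the property is preserved under essentially finite type morphisms) and \cite[Theorem~32.3]{Mat}. I would just include these references.

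The substantive part is the closure of the G-ring property under flat epimorphic images, which I would approach in a local-to-global fashion. Let $f\colon A\longrightarrow B$ be a flat ring epimorphism with $A$ a G-ring, pick an arbitrary $\q\in\Spec{B}$, and set $\p = f^{-1}(\q)\in\Spec{A}$. The pivotal step, which I expect to be the main obstacle, is to show that the induced map $A_\p \longrightarrow B_\q$ is an isomorphism. For this, I would first invoke Lemma~\ref{new ring epis} applied to the pushout of $f$ with the localisation $A\longrightarrow A_\p$ to get that $A_\p \longrightarrow B_\p \cong B\otimes_A A_\p$ is a flat ring epimorphism; post-composing with the classical localisation $B_\p\longrightarrow B_\q$ produces a flat ring epimorphism $A_\p \longrightarrow B_\q$ which is moreover a local homomorphism of local noetherian rings. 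The argument already carried out in the proof of Proposition~\ref{Tor1} (passing to the completion $\widehat{A_\p}$ and exploiting its faithful flatness) then forces $A_\p \longrightarrow B_\q$ to be an isomorphism.

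Once this identification is available, the completion maps $A_\p \longrightarrow \widehat{A_\p}$ and $B_\q \longrightarrow \widehat{B_\q}$ coincide; by assumption the former is regular, so the latter is too. Since $\q$ was arbitrary, $B$ is a G-ring. The parenthetical statement about classical localisations is just the special case in which $f$ is of the form $A\longrightarrow A_S$ for a multiplicative subset $S\subseteq A$. Once the isomorphism $A_\p \cong B_\q$ is established, everything else is a direct transfer of regularity along an isomorphism, so the crux of the argument really lies in that single reduction to the setup of Proposition~\ref{Tor1}.
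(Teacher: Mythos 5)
Your proof is correct, and the core observation --- that the key fact is the isomorphism $A_\p\cong B_\q$ for a flat ring epimorphism $f\colon A\to B$, after which regularity of the completion map transfers immediately --- is exactly what the paper uses. The difference is purely in how this isomorphism is justified. The paper simply cites \cite[Proposition 2.4(iii)]{L} (Lazard) for $A_\p\cong B_\q$, whereas you rederive it internally: you apply Lemma~\ref{new ring epis} to obtain a flat ring epimorphism $A_\p\to B_\p$, compose with the classical localisation $B_\p\to B_\q$ to get a flat local ring epimorphism $A_\p\to B_\q$ of noetherian local rings, and then invoke the completion/faithful-flatness argument already spelled out inside the proof of Proposition~\ref{Tor1}, which shows that any such map is an isomorphism. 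This is a valid and somewhat more self-contained approach: it trades a citation to Lazard for a reuse of machinery already developed in the paper, and it makes explicit that the same local-isomorphism phenomenon underlies both Proposition~\ref{Tor1} and this lemma. The only minor discrepancy is bibliographical: the paper attributes the first sentence to \cite[Theorem 32.3]{Mat} and the Corollary to \cite[Theorem 32.6]{Mat}, whereas you cite Theorems 32.4 and 32.3; you should double-check that your theorem numbers in Matsumura actually say what you want (the finite-type-over-a-field case and the complete-local case, respectively).
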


\begin{proof}
The first sentence follows from \cite[Theorem 32.3]{Mat} and Corollary to \cite[Theorem 32.6]{Mat}. If $A\longrightarrow B$ is a flat epimorphism, $\q\in\Spec B$ and $\p$ the preimage of $\q$, then $A_\p\cong B_\q$ by \cite[Proposition 2.4(iii)]{L}. Since G-rings are defined in terms of stalks, it follows that $B$ is a G-ring provided that $A$ is such.
\end{proof}

Recall further that a commutative noetherian local ring $A$ is called {\bf analytically irreducible} if its completion $\widehat A$ is a domain. If $A$ is regular, it is analytically irreducible since the completion $\widehat A$ is regular local and so, in particular, a (unique factorisation) domain.
Normality of $A$, in general, does not ensure analytic irreducibility, but it does for G-rings.

\begin{lemma} \label{lemma analytic irred}
Let $A$ be a local normal G-ring. Then $A$ is analytically irreducible.
\end{lemma}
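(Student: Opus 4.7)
The plan is to invoke the standard fact that regular ring homomorphisms transfer normality from the source to the target, and then exploit locality.

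First I would unpack the hypothesis: since $A$ is a G-ring and the unique closed point corresponds to the maximal ideal $\mathfrak m$, the completion map $A\longrightarrow\widehat{A}$ is regular by definition, i.e.\ it is flat with geometrically regular fibres. In particular, every fibre is geometrically normal.

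Next I would apply the ascent of normality along a regular (or more generally, along a faithfully flat homomorphism with geometrically normal fibres) map, cf.\ \cite[Theorem 23.9]{Mat}. Since $A$ is assumed normal and the map $A\longrightarrow\widehat{A}$ is faithfully flat (recall that the completion of a noetherian local ring is faithfully flat, see \cite[Theorem 8.14]{Mat}) with geometrically normal fibres, we conclude that $\widehat{A}$ is normal as well. Note that $\widehat{A}$ is noetherian so the paper's definition of normality applies.

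Finally, $\widehat{A}$ is itself a commutative noetherian local ring, with unique maximal ideal $\mathfrak m\widehat{A}$. By the definition of normality recalled in Section~\ref{commutative algebra}, the localisation of $\widehat{A}$ at $\mathfrak m\widehat{A}$, which is simply $\widehat{A}$ itself, is an integrally closed domain. In particular, $\widehat{A}$ is a domain, which is precisely the assertion that $A$ is analytically irreducible.

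The only real ingredient is the ascent of normality under a regular map; once this is available the rest is bookkeeping. The mild subtlety is that the paper's convention permits normal rings to decompose as finite products of normal domains, but this cannot happen for a local ring since it has no nontrivial idempotents, so the conclusion that $\widehat A$ is a domain is immediate once normality has been established.
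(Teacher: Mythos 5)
Your proof is correct and it unfolds essentially the argument behind the paper's citation of \cite[Theorem 32.2(1)]{Mat}: the G-ring hypothesis makes $A\to\widehat{A}$ regular, ascent of normality along a faithfully flat local map with geometrically normal fibres gives normality of $\widehat{A}$, and a normal noetherian local ring is a domain since it has no nontrivial idempotents. The paper simply defers to the textbook reference (and to Zariski's original theorem) rather than spelling out these steps.
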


\begin{proof}
This goes back to \cite{Zar}, and in this generality it follows from \cite[Theorems 32.2(1)]{Mat}.
\end{proof}

To see the connection to flat epimorphisms, notice that the analytic irreducibility of a commutative noetherian local ring $A$ of Krull dimension $d$ ensures that the only prime ideal $\q\in\Spec{\widehat A}$ for which $\dim \widehat A/\q=d$ is $\q=0$. Hence, by HLVT (Theorem \ref{HLVT'}), the local cohomology $H^d_I(A)$ vanishes for each proper ideal $I\subseteq A$ which is not $\m$-primary. This allows us to deduce the following criterion for the existence of flat epimorphisms with given support originating at rings of Krull dimension two.

\begin{proposition} \label{prop criterion for coherence HLVT}
Let $A$ be a commutative noetherian ring of Krull dimension two.

\begin{enumerate}
\item If $\p$ is a prime ideal in $\Spec A$ of height at most one such that for all $\m\in V(\p)$ of height two, the localisation $A_\m$ is analytically irreducible, then $V(\p)$ has coherent complement in $\Spec A$.
\item If $A$ is a normal G-ring and $V\subseteq\Spec A$ is any specialisation closed set such that the minimal primes in $V$ are of height at most one, then $V$ has coherent complement.
\end{enumerate}
\end{proposition}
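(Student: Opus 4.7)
My plan is to translate coherence into the vanishing of higher local cohomology via Theorem~\ref{flat epis} and then verify the vanishing stalkwise using the flat base change. Concretely, I would first use Theorem~\ref{flat epis} to reduce the statement to showing $H^i_V(A) = 0$ for all $i > 1$, where $V = V(\p)$ in (1) and $V$ is the given specialisation closed set in (2). Since $\Kdim A = 2$, Grothendieck vanishing (Theorem~\ref{Gr vanishing}), combined with the directed colimit presentation of $H^i_V$ over ideals $I$ with $V(I) \subseteq V$, handles the range $i > 2$ at once, leaving only $H^2_V(A) = 0$ to prove.

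The next step is to check this stalkwise at each prime $\q \in \Spec A$. The flat base change of Corollary~\ref{properties of H_V}(2) applied to $A \la A_\q$ gives $H^2_V(A)_\q \cong H^2_W(A_\q)$, where $W \subseteq \Spec{A_\q}$ is the specialisation closed preimage of $V$. If $\htt \q \le 1$, the claim follows from Grothendieck vanishing in $A_\q$. If $\htt \q = 2$ then $\q$ is maximal, and specialisation closure of $V$ forces $W = \emptyset$ whenever $\q \notin V$. So the only nontrivial case is $\q \in V$ with $\htt \q = 2$.

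In this case, the key tool is the Hartshorne-Lichtenbaum Vanishing Theorem (HLVT, Theorem~\ref{HLVT'}), applied to the analytically irreducible ring $A_\q$ (provided by hypothesis in (1), and obtained in (2) by combining Lemmas~\ref{lemma G-rings} and~\ref{lemma analytic irred} after reducing via the product decomposition to a normal G-domain). Analytic irreducibility means $0$ is the only prime $\mathfrak{r}$ of $\widehat{A_\q}$ with $\dim \widehat{A_\q}/\mathfrak{r} = 2$, so by HLVT $H^2_I(A_\q) = 0$ whenever $I \subseteq A_\q$ is not $\q A_\q$-primary (using that $\dim \widehat{A_\q}/I\widehat{A_\q} = \dim A_\q/I$ by faithful flatness of completion). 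For part~(1), $W = V(\p A_\q)$ with $\p A_\q$ of height at most one since $\htt\p \le 1 < 2$; hence $\p A_\q$ is not $\q A_\q$-primary and HLVT closes the case.

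For part~(2), $W$ need not be $V(J)$ for a single ideal, so I would invoke the colimit presentation $H^2_W(A_\q) = \varinjlim_{V(J) \subseteq W} H^2_J(A_\q)$ with transition maps induced by reverse inclusion on ideals. Since $\htt\q = 2$ and the minimal primes of $V$ have height at most one, there is a minimal prime $\p \subsetneq \q$ of $V$; the case $\htt\p = 0$ forces $V = \Spec A$, rendering $H^i_V \equiv 0$ for $i > 0$ trivially, so I may assume $\htt\p = 1$. The cofinality step then runs as follows: for each $J_0$ in the system, set $J' = J_0 \cap \p A_\q$; this $J'$ sits further in the direct system than $J_0$, satisfies $V(J') = V(J_0) \cup V(\p A_\q) \subseteq W$ by specialisation closure of $W$, and is contained in $\p A_\q$, hence has height at most one and is not $\q A_\q$-primary. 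HLVT then gives $H^2_{J'}(A_\q) = 0$, so every element of the colimit is killed at a later stage. The principal obstacle I expect is precisely this cofinality argument in part~(2): one must track the direction of the transition maps in the direct system carefully and handle the degenerate cases ($V = \Spec A$, or $\q \notin V$) to ensure the cofinal family $\{J_0 \cap \p A_\q\}$ genuinely annihilates every class in $\varinjlim H^2_J(A_\q)$.
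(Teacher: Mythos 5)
Your argument for part (1) is essentially the paper's: reduce via Theorem~\ref{flat epis} and Grothendieck vanishing to showing $H^2_\p(A)=0$, localise, apply flat base change (Corollary~\ref{properties of H_V}(2) in your version, Proposition~\ref{properties of H_I}(2) in the paper's, which localises only at maximal ideals), and kill the height-two stalks with HLVT using analytic irreducibility. The phrasing and order of steps match closely.

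For part (2) you take a genuinely different route. The paper dispatches (2) in one line by citing \cite[Proposition 4.1(1)]{K} together with part (1) and Lemma~\ref{lemma analytic irred}; you instead redo the stalkwise computation directly, using the directed colimit presentation $H^2_W(A_\q)=\varinjlim_{V(J)\subseteq W}H^2_J(A_\q)$ and a cofinality argument: given $J_0$, pass to $J'=J_0\cap\p A_\q$, note $V(J')=V(J_0)\cup V(\p A_\q)\subseteq W$ so $J'$ is later in the system, and $J'\subseteq\p A_\q$ is not $\q A_\q$-primary, so $H^2_{J'}(A_\q)=0$ by HLVT, annihilating every class in the colimit. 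This is self-contained and avoids the external citation, at the cost of a bit more bookkeeping. The argument is correct, but one step needs tightening: ``the case $\htt\p=0$ forces $V=\Spec A$'' is false over a general ring; what you actually need is that $A_\q$ is a \emph{domain} (which holds because $A$ is normal), so that a height-zero minimal prime of $W$ is $(0)A_\q$ and hence $W=\Spec A_\q$. You do mention the reduction to normal G-domains, but it should be invoked \emph{before} this case split rather than as an aside. With that reordering the proof is complete, and it gives a uniform treatment of (1) and (2) rather than deducing (2) from (1) via Krause.
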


\begin{proof}
(1) By Theorem~\ref{flat epis}, it is enough to show that $H_\p^{2}(A)=0$ or, equivalently, that $H_\p^{2}(A)_\m=0$ for all maximal ideals $\m\subseteq A$. Recall also that we have $H_\p^{2}(A)_\m\cong H_{\p A_\m}^{2}(A_\m)$ by the flat base change (Proposition~\ref{properties of H_I}(2)).
Now, we certainly have $H_{\p A_\m}^{2}(A_\m)=0$ if $\p$ is not contained in $\m$ (since then $\p A_\m = A_\m$) or if the height of $\m$ is at most one (by Theorem \ref{Gr vanishing}). If $\m$ is of height two and $\p\subseteq\m$, then $H_{\p A_\m}^{2}(A_\m) = 0$ directly by the discussion before the proposition.

(2) This follows immediately from \cite[Proposition 4.1(1)]{K}, the first part and Lemma~\ref{lemma analytic irred}.
\end{proof}

Thus, in order to exhibit an example of a local normal domain $A$ of dimension two and $\p\in\Spec A$ of height one such that the complement of $V(\p)$ is not coherent, we need that $A$ is not analytically irreducible, so in particular not a G-ring. An example of such a ring was given by Nagata \cite{Nag}. 

\begin{example} \label{example non-coh: dimension 2}
Let $k$ be a field of characteristic different from two, and let $X,Y$ and $Z$ be algebraically independent elements over $k$. Let $w=\sum_{i>0}a_iX^i\in k[[X]]$ be a transcendental element over $k(X)$ and set $Z_1=Z$ and $Z_{i+1}=X^{-i}(Z-(Y+\sum_{j<i}a_jX^j)^2)\in k(X)[Y,Z]$ for all $i\geq 1$. Consider $R=k[X,Y,Z_1,Z_2,\dots]_\m$, where $\m={(X,Y,Z_1,Z_2,\dots)}$, and $A=R[W]/(W^2-Z)$, where $W$ is a new variable. By \cite{Nag}, $A$ is a noetherian normal local domain of Krull dimension two, and $\widehat{A}=k[[X,Y]][W]/\big((W-(Y+w))(W+(Y+w))\big)$.

Put $\p= (X,W+Y)\in\Spec{A}$ and $\q=(W-(Y+w)) \in\Spec{\widehat A}$. We claim that $\p$ is of height one. To see that, note first that by the arguments in~\cite[part (2)]{Nag}, $R/(X)\cong k[Y]$. The relation $Z_2X=Z-Y^2$ in $R$ implies that the coset of $Z$ is sent to $Y^2$ under this isomorphism. Hence $A/(X) \cong k[Y,W]/(W^2-Y^2) = k[Y,W]/\big((W-Y)(W+Y)\big)$ and $A/\p \cong k[Y,W]/(W+Y)\cong k[Y]$ has dimension one, proving the claim.

Now $\dim\widehat{A}/\q=\dim\widehat{A}=2$ and $\p\widehat{A}+\q = (X,W+Y,W-Y+w) = (X,Y,W)$ since $w\in(X)$ in $\widehat A$ and $\operatorname{char}k\ne2$. Hence $\dim \widehat{A}/(\p\widehat{A}+\q)=0$ and Theorem \ref{HLVT'} implies $H_\p^2(A)\ne0$. Finally, by Theorem~\ref{flat epis}, the complement of $V(\p)$ in $\Spec{A}$ is not coherent.
\end{example}

\subsection{Groups of divisors: the graded and the projective case}
\label{subsection divisors advanced}

In order to give examples of flat epimorphisms which are not universal localisations, we need to compute the Picard and the divisor class groups for certain rings of Krull dimension two and three. We will focus on homogeneous coordinate rings of one and two-dimensional projective varieties where the projective version of the divisor class groups is well-known and extract the groups  we need from that piece of information. This subsection is devoted to describing the necessary relations between the various types of groups of divisors.

Suppose that $A=\bigoplus_{i\in\Z}A_i$ is a $\Z$-graded commutative noetherian normal ring and denote by $L = A_S$ the localisation at the multiplicative set $S\subseteq A$ consisting of all homogeneous non-zero-divisors. The finitely generated submodules $I\subseteq L$ will be called \textbf{homogeneous fractional ideals}. We will denote by $\HC{A}$ the subgroup of $\C{A}$ consisting of the homogeneous invertible fractional ideals, and by $\HPic{A}$ the \textbf{homogeneous Picard group}, i.e.\ the quotient of $\HC{A}$ by the subgroup of principal homogeneous invertible fractional ideals. In the same vein, we will denote by $\HDiv{A} \subseteq \Div{A}$ the free subgroup generated by the homogeneous ideals. We also define the subgroup $\HPDiv{A} \subseteq \PDiv{A}$ consisting of the divisors of the homogeneous elements of $L$. We will call the quotient $\HCl{A} = \HDiv{A}/\HPDiv{A}$ the \textbf{homogeneous divisor class group} of $A$.

Since associated prime ideals of graded $A$-modules are homogeneous by~\cite[\S3.5]{E}, the map $\di\colon\C{A}\longrightarrow\Div{A}$ restricts to $\di\colon\HC{A}\longrightarrow\HDiv{A}$ and we have a commutative diagram
\[
\xymatrix{
0\ar[r]&\{sA:s\in L^\times\textrm{ homogeneous}\}\ar[r]\ar[d]^{\cong}&\HC{A}\ar[r]\ar[d]^{\di}&\HPic{A}\ar[d]^{\overline{\di}}\ar[r]&0 \\
0\ar[r]& \HPDiv{A}\ar[r] & \HDiv{A}\ar[r]&\HCl{A}\ar[r]&0,
}
\]
where the maps $\di$ and $\overline{\di}$ are again injective. The following lemma is the first step of relating this diagram to the similar diagram~\eqref{diagram of divisors} for not necessarily homogeneous divisors.

\begin{lemma}\label{homogeneity by numerology}
Let $A=\bigoplus_{i\in\Z}A_i$ be a $\Z$-graded commutative noetherian normal ring, and let $I$ be an invertible fractional ideal of $A$. If $\di(I)$ lies in $\HDiv{A}$, then $I$ lies in $\HC{A}$. In particular, we have $\HPDiv{A}=\HDiv{A}\cap\PDiv{A}$.
\end{lemma}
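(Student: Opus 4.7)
The plan is to reduce to the case of a $\mathbb{Z}$-graded normal domain $A$, since a graded normal ring decomposes as a finite product of graded normal domains and each ingredient in the statement respects this product structure. Assume henceforth that $A$ is such a domain with fraction field $K$. The key geometric input is that $L$ is itself a normal noetherian (hence Krull) domain, whose height-one primes correspond, by contraction, to the height-one primes of $A$ disjoint from $S$; consequently
\[
L \;=\; \bigcap_{\substack{\p\in\Spec{A},\;\htt\p=1\\ \p\cap S=\emptyset}} A_\p.
\]
Moreover, any prime $\p$ of $A$ disjoint from $S$ is necessarily non-homogeneous, since a non-zero homogeneous prime contains one of its own non-zero homogeneous generators.

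For the first assertion, writing $\di(I)=\sum_\p n_\p\p$, the hypothesis $\di(I)\in\HDiv{A}$ forces $n_\p=0$ for every non-homogeneous height-one $\p$. Hence $I_\p=(\p A_\p)^{0}=A_\p$ for every such $\p$, and intersecting over all height-one primes disjoint from $S$ yields $I\subseteq L$. Being moreover finitely generated and invertible, $I$ then lies in $\HC{A}$.

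For the second assertion, only the inclusion $\HDiv{A}\cap\PDiv{A}\subseteq\HPDiv{A}$ requires argument. Given $D=\di(s)\in\HDiv{A}\cap\PDiv{A}$ with $s\in K^\times$, the first assertion applied to $sA\in\C{A}$ gives $s\in L$. The main obstacle is then to promote this to the statement that $s$ is a homogeneous unit of $L$. To show $s\in L^\times$, suppose otherwise: then $s$ would lie in some height-one prime $\q$ of the Krull domain $L$, whose contraction $\p=\q\cap A$ would be a height-one prime of $A$ disjoint from $S$, hence non-homogeneous; but then $v_\p(s)\geq 1$, contradicting $\di(s)\in\HDiv{A}$. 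Finally, a standard homogeneity argument in a $\mathbb{Z}$-graded domain shows that units are always homogeneous: if $u=u_a+\cdots+u_b$ is a unit with $u_a,u_b\neq 0$ and $a\leq b$, comparing the lowest and highest homogeneous components in $u\cdot u^{-1}=1$ forces $a=b$. Thus $s\in L^\times$ is homogeneous, and so $D=\di(s)\in\HPDiv{A}$.
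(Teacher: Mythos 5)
Your proof is correct, but it takes a different route from the paper. The paper handles the effective case $I\subseteq A$ by invoking Remark~\ref{remark reconstruct invertible ideal}, which reconstructs $I$ as the intersection $\bigcap_i\p_i^{(n_i)}$ of symbolic powers of the primes $\p_i$ supporting $\di(I)$; since symbolic powers of homogeneous primes are homogeneous, $I$ is a homogeneous ideal. The general case is reduced to the effective one by multiplying by a suitable homogeneous $g\in A$. You instead observe that the height-one primes $\p$ with $\p\cap S=\emptyset$ are exactly the non-homogeneous ones and that $L=\bigcap_{\p\cap S=\emptyset,\,\htt\p=1}A_\p$ (as $L$ is again a Krull domain); then $\di(I)\in\HDiv{A}$ forces $I_\p=A_\p$ at every such $\p$, giving $I\subseteq L$ directly without the symbolic-power machinery. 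This is a clean alternative. You also spell out the ``in particular'' assertion carefully, showing that a principal generator $s$ is a unit of $L$ (if $v_\p(s)>0$ at a height-one prime $\q$ of $L$, its contraction $\p=\q\cap A$ would be a non-homogeneous height-one prime in the support of $\di(s)$) and hence homogeneous since units of a $\Z$-graded domain are homogeneous; the paper leaves this step implicit. One could also obtain $s\in L^\times$ by applying the first assertion to $s^{-1}A$ as well, but your argument via principal ideal theorem is equally valid. Just make sure to justify the reduction to the domain case by noting that idempotents in a reduced $\Z$-graded ring are concentrated in degree zero, so the product decomposition of a normal ring into normal domains respects the grading.
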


\begin{proof}
If $I\subseteq A$, this is an immediate consequence of Remark~\ref{remark reconstruct invertible ideal} and the fact that $\p^{(n)}$ is homogeneous whenever $\p$ is homogeneous and $n\ge 0$ by~\cite[\S3.5]{E}. If $I$ is any invertible fractional ideal of $A$, this follows from the fact that there exists a homogeneous element $g\in A$ such that $\di(I)+\di(gA)$ is effective and, hence, $gI\subseteq A$ by Lemma~\ref{reconstruct invertible ideal}.
\end{proof}

The main result about homogeneous class groups now says that they are canonically isomorphic to the non-homogeneous ones for normal rings.

\begin{proposition} \label{homogeneous class groups main}
Let $A=\bigoplus_{i\in\Z}A_i$ be a $\Z$-graded commutative noetherian normal ring. The inclusions $\HC{A}{\longrightarrow}\C{A}$ and $\HDiv{A}{\longrightarrow}\Div{A}$ induce isomorphisms
$\HPic{A}\cong\Pic{A}$ and $\HCl{A}\cong\Chow{A}$.
\end{proposition}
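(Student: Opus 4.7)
The plan is to treat the two isomorphisms in tandem, since they fit into a commutative square of the form~\eqref{diagram of divisors} with the homogeneous versions on top and the ordinary ones on the bottom. Injectivity of both induced maps is immediate from Lemma~\ref{homogeneity by numerology}: a principal divisor lying in $\HDiv{A}$ is in $\HDiv{A}\cap\PDiv{A}=\HPDiv{A}$, so it is trivial in $\HCl{A}$; and if $I\in\HC{A}$ is principal, then $\di(I)\in\HPDiv{A}$, so by the injectivity of $\di$ on $\C{A}$ (Theorem~\ref{relation groups}) we get $I=gA$ for a homogeneous $g\in K^\times$.

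For surjectivity I would first reduce to the case of a normal domain: primitive idempotents of a $\Z$-graded ring lie in degree zero, so the decomposition of a normal ring into a product of normal domains is compatible with the grading and with all four groups in the square. Assume now $A$ is a $\Z$-graded normal domain. Then $L=A_S$ is a $\Z$-graded domain in which every nonzero homogeneous element is invertible; its grading support $\{n\in\Z:L_n\ne 0\}$ is a subgroup $d\Z\subseteq\Z$ and $L_0$ is a field, so $L$ is either a field or isomorphic to $L_0[t,t^{-1}]$, a principal ideal domain. Moreover, a height-one prime $\p\subseteq A$ meets $S$ if and only if $\p$ is homogeneous: a nonzero homogeneous $s\in\p$ forces $\p$ to be a minimal prime over the homogeneous ideal $sA$, and such minimal primes are homogeneous because the largest homogeneous ideal $\p^*$ contained in a prime $\p$ is itself prime.

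Surjectivity of $\HCl{A}\to\Chow{A}$ reduces to exhibiting a homogeneous representative for the class of each non-homogeneous height-one prime $\p$. For such $\p$ we have $\p\cap S=\emptyset$, so $\p L$ is a height-one (hence principal) prime of $L$; clearing denominators we write $\p L=hL$ with $h\in\p$. For every height-one prime $\q$ of $A$ disjoint from $S$, the discrete valuation $v_\q(h)$ computed in $A_\q=L_{\q L}$ is $1$ if $\q=\p$ and $0$ otherwise, so $\di(hA)-[\p]$ is supported on height-one primes meeting $S$, which are homogeneous. Hence $\di(hA)-[\p]\in\HDiv{A}$ and $[\p]\equiv -(\di(hA)-[\p])\pmod{\PDiv{A}}$.

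For surjectivity of $\HPic{A}\to\Pic{A}$ I would apply this recipe termwise. Given $I\in\C{A}$ with $\di(I)=\sum_\p n_\p[\p]$, pick an $h_\p\in\p$ for each non-homogeneous $\p$ in the (finite) support as above, and set $a=\prod h_\p^{n_\p}\in K^\times$. By construction $\di(I)-\di(a)=\sum_{\p\text{ hom}}n_\p[\p]+\sum_{\p\text{ non-hom}}n_\p([\p]-\di(h_\p A))$ lies in $\HDiv{A}$, so $\di(I\cdot a^{-1}A)\in\HDiv{A}$; Lemma~\ref{homogeneity by numerology} then gives $I\cdot a^{-1}A\in\HC{A}$, and this fractional ideal has the same class as $I$ in $\Pic{A}$. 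The main nontrivial inputs beyond Lemma~\ref{homogeneity by numerology} are the structural description of $L$ as a PID (or field) and the homogeneity of minimal primes over homogeneous ideals, both classical.
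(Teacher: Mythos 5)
Your proposal is correct and follows essentially the same strategy as the paper (which recalls the argument from Samuel's lecture notes): reduce to the domain case, exploit the fact that the homogeneous localisation $L=A_S$ is a principal ideal domain (or a field) to replace any non-homogeneous height-one prime by a linearly equivalent homogeneous divisor, and invoke Lemma~\ref{homogeneity by numerology} for injectivity. The one organisational difference lies in how $\HPic{A}\cong\Pic{A}$ is deduced: the paper observes that the square relating $\HPic{A}$, $\Pic{A}$, $\HCl{A}$, $\Chow{A}$ has monomorphic columns and is a pull-back (again by Lemma~\ref{homogeneity by numerology}), so the top arrow is an isomorphism once the bottom one is; you instead establish surjectivity directly, multiplying an invertible fractional ideal $I$ by a product $a=\prod h_\p^{n_\p}$ chosen so that $\di(I\cdot a^{-1}A)$ becomes homogeneous and then applying Lemma~\ref{homogeneity by numerology}. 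Both routes rest on the same ingredients; the pull-back argument is a touch slicker, your termwise clearing a touch more explicit.
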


\begin{proof}
This was proved in~\cite[Proposition 7.1]{Sam}, but we recall the proof for the reader's convenience. We  suppose without loss of generality that $A$ is a domain and, by excluding a trivial case, also that $A\ne A_0$. 

If we denote by $S\subseteq A$ the set of non-zero homogeneous elements and by $L=A_S$ the localisation at $S$, then $L\cong{K}[T^{\pm1}]$, where $K=L_0$ is a field and $T$ is any fixed homogeneous element of the smallest possible positive degree. Let $\p$ be a prime ideal of $A$ of height one, but not necessarily homogeneous. Since $L$ is a principal ideal domain, there exists an element $f$ in $\p$ such that $\p L = {fL}$. This implies that there exists $g$ in $S$ such that already $\p A_g = f A_g$. Hence the divisor $D := \p - \di(f)$ is supported in $V(g)$ and, consequently, $D$ lies in $\HDiv{A}$ and $[D] = [\p]$ in $\Chow{A}$. This implies that the induced map $\HCl{A}\longrightarrow\Chow{A}$ is surjective and the injectivity follows from Lemma~\ref{homogeneity by numerology}.
We thus have a commutative square with monomorphisms in the columns
\[
\xymatrix{
\HPic{A} \ar[r]\ar[d]_-{\overline{\di}} & \Pic{A}\ar[d]^-{\overline{\di}} \\
\HCl{A} \ar[r]^-{\cong} & \Cl{A}
}
\]
which is a pull-back by Lemma~\ref{homogeneity by numerology}, showing that also $\HPic{A}\longrightarrow\Pic{A}$ is an isomorphism. 
\end{proof}

If $k$ is a field and $A=\bigoplus_{i\ge0}A_i$ is non-negatively graded with $A_0=k$, we can form the projective scheme $\ProjSch{A}$. As a set, $X=\ProjSch{A}$ is the set of homogeneous prime ideals of $A$ different from the irrelevant ideal $\m=\bigoplus_{i\ge1}A_i$. To $X$ one can again attach its group of divisors $\Div{X}$ and its class group $\Chow{X}$. Divisors are again formal $\Z$-linear combinations of height one subvarieties of $X$, i.e.\ $\Div{X}=\HDiv{A}$. In the class group one factors out the subgroup of divisors of degree zero elements of $L=A_S$, where $S$ is the set of homogeneous regular elements of $A$. This allows us to quickly deduce the relation between $\Chow{A}$ and $\Chow{X}$.

\begin{proposition} \label{class group via projective scheme}
Let $k$ be a field, $A=\bigoplus_{i\ge0}A_i$ a graded normal domain of Krull dimension at least one with $A_0=k$ and $X=\ProjSch{A}$ its associated projective scheme. Let also $L$ be the ring of fractions obtained by localising $A$ at the set of non-zero homogeneous elements. For any homogeneous element $T$ in $L$ of  smallest possible positive degree, there is a short exact sequence
\[ 0\longrightarrow \Z \overset{\varphi}\longrightarrow \Chow{X} \longrightarrow \Chow{A} \longrightarrow 0. \]
where $\varphi(1)$ is the class of $\di(T)$.
\end{proposition}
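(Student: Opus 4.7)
The plan is to realise both $\Cl{X}$ and $\Cl{A}$ as quotients of $\HDiv{A} = \Div{X}$ by two nested subgroups of principal divisors, so that the exact sequence becomes essentially the third isomorphism theorem. By construction, the irreducible codimension-one subvarieties of $X$ correspond bijectively to the height-one homogeneous primes of $A$, so $\Div{X} = \HDiv{A}$. Furthermore, $\Cl{X} = \HDiv{A}/P_0$ where $P_0 := \{\di(f) : f \in L_0^\times\}$, while $\HCl{A} = \HDiv{A}/\HPDiv{A}$, with $\HPDiv{A}$ generated by the divisors of nonzero homogeneous elements of $L$. Since $P_0 \subseteq \HPDiv{A}$ and $\HCl{A} \cong \Cl{A}$ by Proposition~\ref{homogeneous class groups main}, there is a canonical surjection $\Cl{X} \twoheadrightarrow \Cl{A}$ whose kernel is $\HPDiv{A}/P_0$.

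To identify this kernel, I exploit the structure of the graded field $L$. Since every nonzero homogeneous element of $L$ is invertible, the set of degrees of such elements is a subgroup of $\Z$; the minimality of $\deg(T)$ forces it to equal $\deg(T)\Z$. Within each degree $k\deg(T)$ any two nonzero elements differ by a unit of $L_0 = K$, so $L = K[T^{\pm 1}]$ and every nonzero homogeneous element of $L$ can be written uniquely as $uT^n$ with $u \in K^\times$ and $n \in \Z$. Consequently $\di(uT^n) = \di(u) + n\cdot\di(T)$, and modulo $P_0$ the subgroup $\HPDiv{A}$ is cyclic, generated by $[\di(T)] = \varphi(1)$. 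This gives exactness at $\Cl{X}$.

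The main point to verify is that $\varphi$ is injective. Suppose $n[\di(T)] = 0$ in $\Cl{X}$, so that $n\cdot\di(T) = \di(u)$ in $\HDiv{A}$ for some $u \in K^\times$. Then $g := uT^{-n} \in L^\times$ is homogeneous and satisfies $\di(g) = 0$. Applying Lemma~\ref{reconstruct invertible ideal} to the principal invertible fractional ideal $gA$ (together with the equality $A = \bigcap_{\htt(\p)=1}A_\p$ valid for normal domains) yields $gA = A$, whence $g \in A^\times$. A standard leading-degree argument in the graded domain $A$, using $A_0 = k$ and $\dim A \geq 1$, shows $A^\times = k^\times$; thus $g$ is homogeneous of degree $0$. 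Comparing degrees in the identity $u = gT^n$ forces $n\cdot\deg(T) = 0$, and since $\deg(T) > 0$ we conclude $n = 0$. This establishes the injectivity of $\varphi$ and completes the exact sequence.
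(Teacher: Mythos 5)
Your proof is correct and takes essentially the same route as the paper: both identify $\Chow{X}$ and $\Chow{A}$ as quotients of $\HDiv{A}$ by the nested subgroups of homogeneous principal divisors and apply the third isomorphism theorem after showing that $\HPDiv{A}$ modulo the degree-zero principal divisors is infinite cyclic on $[\di(T)]$. The paper's version is terser---it cites Watanabe and merely states the short exact sequence $0\to\{\di(gA):g\in K\}\to\HPDiv{A}\to\Z\to0$---whereas you supply the details, in particular the injectivity of $\varphi$ via Lemma~\ref{reconstruct invertible ideal} and the computation $A^\times=k^\times$.
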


\begin{proof}
This is in fact a special case of~\cite[Theorem 1.6]{Wat}. As in the proof of Proposition~\ref{homogeneous class groups main}, we have $L\cong K[T^{\pm1}]$, where $K=L_0$ is a field. Then we have a short exact sequence
\[ 0\longrightarrow \{\di(gA):g\in K\} \longrightarrow \HPDiv{A} \longrightarrow \Z \longrightarrow 0, \]
where the quotient $\Z$ is generated by the coset of $T$. Now we have $\Chow{A} \cong \HDiv{A}/\HPDiv{A}$ by Proposition~\ref{homogeneous class groups main}, whereas $\Chow{X} = \HDiv{A}/\{\di(gA):g\in K\}$.
\end{proof}

\begin{remark} \label{Kurano's iso}
Let $A=\bigoplus_{i\ge0}A_i$ be a graded commutative noetherian normal ring with $A_0=k$ and let $\m=\bigoplus_{i\ge1}A_i$ be the unique maximal graded ideal. Kurano established in \cite[Lemma (4.1)]{Kura} an isomorphism $\Chow{A}\cong\Chow{A_\m}$. Since every projective graded $A$-module is free, we have $\Pic{A} = 0$ by Proposition~\ref{homogeneous class groups main} and, hence, we also have an isomorphism $\Pic{A}\cong\Pic{A_\m}\;(=0)$. 
\end{remark}

\subsection{Flat epimorphisms which are not localisations}
\label{subsec flat but not loc}

If $A=\bigoplus_{i\ge0}A_i$ is the homogeneous coordinate ring of a $(d-1)$-dimensional smooth projective variety $X=\Proj A$ over a field $k$, then the singular locus of $\Spec{A}$ (which is geometrically the affine cone of $X$) contains at most the irrelevant ideal $\m=\bigoplus_{i\ge1}A_i$. If $d=2$ (i.e. if $X$ is a smooth projective curve) and $A$ is normal (which is automatic if $A$ is Cohen-Macaulay by Remark~\ref{properties of a normal ring}(4)), then any specialisation closed set $V\subseteq\Spec{A}$ with minimal primes of height at most one has coherent complement by Proposition~\ref{prop criterion for coherence HLVT} (together with Lemma~\ref{lemma G-rings}). In view of Theorem~\ref{thm normal domain}, all we need to do to give two-dimensional examples of flat epimorpshisms which are not universal localisations is to find such $A$ with non-torsion $\Chow{A}/\Pic{A}$. We moreover know that $\Pic{A}=0$ by Remark~\ref{Kurano's iso}.

Now we can give particular examples. Recall that an \textbf{elliptic curve} $E$ (embedded into the projective plane $\PP_k^2 = \ProjSch{k[X,Y,Z]}$ over a field $k$) is defined by $E=\ProjSch{A}$, where $A=k[X,Y,Z]/(f)$ is a standard graded ring with $f$ a homogeneous polynomial of degree $3$ such that $A_\p$ is a regular local ring for all homogeneous prime ideals $\p$ not containing (or, equivalently, not equal to) the irrelevant ideal $(X,Y,Z)$. In particular, $E$ is a smooth projective curve.

We will also assume for simplicity that the set $E(k)$ of $k$-rational points of $E$ contains an inflection point $\mathcal{O}$. Then $E(k)$ has a well-known group structure with the neutral element $\mathcal{O}$ and the group operation given by the rule that if a line in $\PP_k^2$ intersects $E(k)$ in three points (when counting the intersection with multiplicities), then the sum of the three points is $\mathcal{O}$. The divisor class group $\Chow{E}$ is well-known to be isomorphic to $E(k)\times\Z$ in this case. More specifically, there is a well-defined group homomorphism $\deg\colon \Chow{E}\longrightarrow\Z$ which sends the class of a closed point $\p$ of $E$ to the degree $\deg([\p]) = \dim_kk(\p)$. The kernel of $\deg$ is then identified with $E(k)$ via a group isomorphism which sends $\p$ in $E(k)$ to $[\p]-[\mathcal{O}]\in\Cl{E}$. Now we can use Propositions~\ref{homogeneous class groups main} and~\ref{class group via projective scheme} to compute $\Chow{A}$. If $T = aX+bY+cZ$ is an equation for the tangent of $E$ at $\mathcal{O}$, then ${\di(T)} = 3\cdot\mathcal{O}$ and, hence
\begin{equation}\label{class group elliptic curves}
\Cl{A} \cong E(k)\times \Z/(3).
\end{equation}

We summarise the results about homogeneous coordinate rings of elliptic curves in the following theorem.

\begin{theorem}\label{ce}
Let $A$ be the homogeneous coordinate ring of an elliptic curve $E$ over $k$ with a $k$-rational inflection point $\mathcal{O}$. If $\p$ lies in $E(k)$ (in particular, $\p$ is a homogeneous prime ideal of $A$ of height one) and $\p$ is a non-torsion point of the group $E(k)$, then there is a flat ring epimorphism $f\colon A\longrightarrow B$ associated with $V(\p)$ and $f$ is not a universal localisation.
\end{theorem}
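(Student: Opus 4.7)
The plan is to verify the two claims separately using the machinery developed earlier in the paper.

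For the existence of the flat ring epimorphism $f$, observe that $A = k[X,Y,Z]/(g)$ (with $g$ the defining cubic of $E$) is a finitely generated $k$-algebra, hence a G-ring by Lemma~\ref{lemma G-rings}. As a hypersurface in a polynomial ring, $A$ is Cohen-Macaulay, and the smoothness of $E$ ensures that $A_\q$ is regular for every homogeneous prime $\q\ne\m$. In particular $A$ is regular in height one, so Remark~\ref{properties of a normal ring}(4) guarantees that $A$ is normal; it is moreover a two-dimensional domain. Since $\p$ corresponds to a closed point of $E$, it is a homogeneous prime of height one. Proposition~\ref{prop criterion for coherence HLVT}(2) then gives that $V(\p)$ has coherent complement in $\Spec A$, and Corollary~\ref{coherent complement} produces a flat ring epimorphism $f\colon A\longrightarrow B$ whose associated specialisation closed subset is $V(\p)$.

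To show that $f$ is not a universal localisation, I would argue by contradiction. Note that $V(\p)$ has a unique minimal prime, namely $\p$. If $f$ were a universal localisation, then the converse part of Theorem~\ref{thm normal domain} would force the class of $\p$ to be torsion in $\Chow{A}/\Pic{A}$. Remark~\ref{Kurano's iso} yields $\Pic{A}=0$ (since $A$ is graded with $A_0=k$), so it suffices to reach a contradiction from the assumption that $[\p]$ is torsion in $\Chow{A}$ itself.

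For this I would unwind~\eqref{class group elliptic curves}. Under the standard isomorphism $\Chow{E}\cong E(k)\times\Z$ which sends a class $[D]$ of degree $n$ to $\bigl([D]-n[\mathcal{O}],\,n\bigr)$, the class $[\p]$ corresponds to $(\p,1)$ (since $[\p]-[\mathcal{O}]\leftrightarrow\p$ by the description recalled before~\eqref{class group elliptic curves}). By Proposition~\ref{class group via projective scheme}, the kernel of the surjection $\Chow{E}\twoheadrightarrow\Chow{A}$ is generated by the class of $\di(T)=3\cdot\mathcal{O}$, which under the above isomorphism corresponds to $(0,3)$. Hence $\Chow{A}\cong E(k)\times\Z/3\Z$ and the image of $[\p]$ is $(\p,\overline{1})$. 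If $n\cdot(\p,\overline{1})=0$ in $E(k)\times\Z/3\Z$ for some $n\geq 1$, then in particular $n\p=0$ in $E(k)$, contradicting the assumption that $\p$ is non-torsion. Hence $[\p]$ is non-torsion in $\Chow{A}$, as required.

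I do not anticipate any serious obstacle: the proof is a direct assembly of earlier results, and the only non-trivial bookkeeping is the identification of the image of $[\p]$ in $\Chow{A}$ via the short exact sequence of Proposition~\ref{class group via projective scheme}.
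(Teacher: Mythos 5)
Your proof is correct and follows essentially the same route as the paper's: existence of the flat ring epimorphism via Proposition~\ref{prop criterion for coherence HLVT} and Lemma~\ref{lemma G-rings} (after verifying normality and the G-ring property), and non-universality via the converse direction of Theorem~\ref{thm normal domain} combined with $\Pic A = 0$ (Remark~\ref{Kurano's iso}) and the computation $\Chow{A}\cong E(k)\times\Z/3\Z$ from~\eqref{class group elliptic curves}. Your explicit unwinding of where $[\p]$ lands in $E(k)\times\Z/3\Z$ (namely at $(\p,\overline{1})$) is a helpful piece of bookkeeping that the paper leaves implicit, but it matches the paper's construction of the isomorphism exactly.
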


\begin{proof}
The flat ring epimorphism exists by Proposition~\ref{prop criterion for coherence HLVT} and Lemma~\ref{lemma G-rings}. It is not a universal localisation by Theorem~\ref{thm normal domain} and the isomorphism~\eqref{class group elliptic curves}.
\end{proof}

\begin{example}
Let $A=\QQ[X,Y,Z]/(X^3-Y^2Z-4Z^3)$ be a graded ring, with the grading defined by $\deg(X)=\deg(Y)=\deg(Z)=1$, and let $\p$ denote the prime ideal $(X-2Z,Y-2Z)$. Then the assumption of Theorem \ref{ce} is satisfied; see \cite[Example 1.10]{KKS}. Hence we have flat ring epimorphisms originating from $A$ which are not universal localisations.

If we, on the other hand, let $A=\QQ[X,Y,Z]/(X^3-XZ^2-Y^2Z)$ or $A=\QQ[X,Y,Z]/(X^3-Y^2Z+Z^3)$, then $\Chow{A}$ is torsion (\cite[Examples 1.8 and 1.9]{KKS}). In particular, every flat ring epimorphism originating from $A$ is a universal localisation.
\end{example}

We conclude the paper with an analysis of flat epimorphisms and universal localisations originating from the algebra $A=k[X,Y,U,V]/(XU-YV)$ for an algebraically closed field $k$. This is a three-dimensional normal domain (compare with Example~\ref{example non-coh: dimension 3}). It turns out that there are both primes $\p\in\Spec A$ of height one such that $V(\p)$ does not have coherent complement and primes $\p$ where $V(\p)$ has coherent complement, but the corresponding flat epimorphism is not a universal localisation.

In order to compute the divisor class and the Picard groups, we equip $A$ with the standard grading with all $X,Y,U,V$ in degree $1$ and put $\m=(X,Y,U,V)=\bigoplus_{i\ge 1}A_i$. Then $\Pic{A}=0$ by Remark~\ref{Kurano's iso} and to compute $\Chow{A}$, we will use Propositions~\ref{homogeneous class groups main} and \ref{class group via projective scheme}.

To this end, it is well-known that $X := \ProjSch{A}\cong \PP_k^1\times\PP_k^1$ via the Segre embedding and that $\Chow{X} \cong \Chow{\PP_k^1\times\PP_k^1}\cong\Z\times\Z$, \cite[Example II.6.6.1]{HarAG}.
In fact, we can be more specific about this isomorphism. Consider the embedding $A \longrightarrow k[S_0,S_1,T_0,T_1]$ of $k$-algebras given by
\[
X \mapsto S_0T_0, \quad
Y \mapsto S_1T_0, \quad
U \mapsto S_1T_1, \quad
V \mapsto S_0T_1.
\]
This is essentially the Segre embedding at the level of homogeneous coordinate rings, where we equip $k[S_0,S_1,T_0,T_1]$ with the $\Z\!\times\!\Z$-grading with $S_0,S_1$ in degree $(1,0)$ and $T_0,T_1$ in degree $(0,1)$. The isomorphism of schemes $X\cong\PP_k^1\times\PP_k^1$ in particular induces a bijection between homogeneous prime ideals $\p\subseteq A$ of height one and homogeneous prime ideals $\p'\subseteq k[S_0,S_1,T_0,T_1]$ of height one, which is given by $\p=\p'\cap A$. Since every height one prime ideal of a polynomial ring is principal, the following fact immediately follows from the discussion.

\begin{lemma}
For any homogeneous prime ideal $\p\subseteq A=k[X,Y,U,V]/(XU-YV)$, there is a homogeneous (with respect to the $\Z\!\times\!\Z$-grading) irreducible polynomial $f_\p\in k[S_0,S_1,T_0,T_1]$ such that $\p = (f_\p\cdot k[S_0,S_1,T_0,T_1])\cap A$. Moreover, such $f_\p$ is unique up to a non-zero scalar multiple.
\end{lemma}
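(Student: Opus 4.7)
The plan is to translate everything into a statement about $R := k[S_0,S_1,T_0,T_1]$ via the bijection recalled above between bi-homogeneous height-one primes of $A$ and of $R$, and then to exploit that $R$ is a unique factorisation domain.

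First, I would apply the bijection to pass from $\p$ to the unique bi-homogeneous height-one prime $\p' \subseteq R$ satisfying $\p = \p' \cap A$. Since $R$ is a polynomial ring over $k$, it is a UFD, so by Krull's principal ideal theorem $\p' = gR$ for some irreducible $g \in R$. The actual content of the lemma is then that this generator can be taken to be bi-homogeneous.

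To arrange this, I would pick any nonzero bi-homogeneous element $h \in \p'$ (such an element exists because $\p'$ is itself bi-homogeneous) and factor $h = p_1 \cdots p_n$ into irreducibles in $R$. The key observation is the standard graded-UFD fact: in a $\Z$-graded domain, if the product of two nonzero elements is homogeneous, then both factors are homogeneous up to a nonzero scalar (proved by comparing the lowest and highest homogeneous components of each factor). I would apply this twice, to the two natural $\Z$-gradings on $R$ given respectively by the $S$-degree and the $T$-degree, to conclude that each $p_i$ can be chosen homogeneous with respect to both gradings, hence bi-homogeneous. Since $g$ divides $h$ and $g$ is irreducible, $g$ must be associate in $R$ to some $p_i$, so setting $f_\p := p_i$ yields a bi-homogeneous irreducible generator of $\p'$, and in particular $\p = f_\p R \cap A$.

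For uniqueness: if $f$ and $f'$ are two bi-homogeneous irreducibles with $fR \cap A = f'R \cap A = \p$, then $fR$ and $f'R$ are two bi-homogeneous height-one primes of $R$ corresponding to $\p$, so by the injectivity of the bijection $fR = f'R$; hence $f = u f'$ for some unit $u \in R$, and since $R^\times = k^\times$ the two polynomials agree up to a nonzero scalar, as claimed. The only delicate point is the bi-graded factorisation step in the preceding paragraph; everything else is formal once the UFD property of $R$ and the bijection recalled above the lemma are in hand.
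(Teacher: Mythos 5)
Your proof is correct and takes essentially the same approach as the paper, which gives no explicit argument but derives the lemma directly from the bijection between bi-homogeneous height-one primes and the fact that height-one primes in the UFD $k[S_0,S_1,T_0,T_1]$ are principal. You usefully supply the detail the paper leaves implicit, namely that the irreducible generator can be taken bi-homogeneous (via the graded-domain factorisation fact applied to both $\Z$-gradings); the only minor infelicity is your phrase "homogeneous up to a nonzero scalar" — the factors are in fact exactly homogeneous — but this does not affect the argument.
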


Now, the bijection $\psi\colon\Chow{X}\longrightarrow \Z\times\Z$ is explicitly given as follows. If $[\p]\in\Chow{X}$ is a divisor class represented by a homogeneous prime ideal $\p\subseteq A$, $f_\p$ is the homogeneous polynomial as in the above lemma, $d_\p$ is the degree of $f_\p$ in the variables $S_0,S_1$ and $e_\p$ is the degree of $f_\p$ in $T_0,T_1$, then we have
\begin{align*}
\psi\colon \Chow{\ProjSch{A}} &\overset{\cong}\longrightarrow \Z\times\Z, \\
[\p] &\longmapsto (d_\p,e_\p).
\end{align*}
In particular, one readily checks that
\[
\psi([(X,V)]) = \psi([(Y,U)]) = (1,0)
\quad\textrm{and}\quad
\psi([(X,Y)]) = \psi([(U,V)]) = (0,1),
\]
since $f_{(X,V)}=S_0$, $f_{(Y,U)}=S_1$, $f_{(X,Y)}=T_0$ and $f_{(U,V)}=T_1$. Since $(X,V)\cdot(X,Y) = X\cdot\m$ as ideals in $A$, we have that $\di(X)=(X,V)+(X,Y)$ in $\HDiv{A}$, which corresponds to $(1,1)\in\Z\times\Z$ under $\psi$ (see also~\cite[Example II.6.6.2]{HarAG}).
Now we can use Proposition~\ref{class group via projective scheme} with $T=X$ to conclude that $\Chow{A} \cong \Z\times\Z/\big((1,1)\big)\cong \Z$. With the notation above, if $\p$ is a homogeneous prime ideal of $A$ of height one and $d_\p$, $e_\p$ are the degrees of $f_\p$, we explicitly have
\begin{align*}
\rho\colon \Chow{A} &\overset{\cong}\longrightarrow \Z, \\
[\p] &\longmapsto e_\p - d_\p.
\end{align*}

The existence of flat epimorphisms and universal localisations starting in $A$, depending on the class groups, is summarised in the following proposition. For a particular instance of case \eqref{item example of flat non-univ loc}, we refer to \cite[Example 6.14]{MS} and \cite[Remarks 2.1]{HelSt}.

\begin{proposition} \label{proposition 2x2 matrices}
Let $k$ be an algebraically closed field and $A=k[X,Y,U,V]/(XU-YV)$ with the standard $\Z$-grading. If $\p\subseteq A$ is a homogeneous prime ideal of height one and $\psi([\p]) = (d_\p,e_\p)$ with the notation above, exactly one of the following three cases occurs.

\begin{enumerate}
\item If $d_\p=0$ or $e_\p=0$, then $V(\p)$ does not have coherent complement. This case occurs precisely if $\p$ is of the form $(g(X,Y),g(V,U))$ or $(g(X,V),g(Y,U))$, where $g$ is a linear form.
\item\label{item example of flat non-univ loc} If $d_\p\ne 0\ne e_\p$ and $d_\p\ne e_\p$, then $V(\p)$ has coherent complement, but the corresponding flat epimorphism $A\longrightarrow B$ is not a universal localisation.
\item If $d_\p=e_\p$, then $V(\p)$ corresponds to a classical localisation.
\end{enumerate}
\end{proposition}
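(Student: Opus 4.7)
The plan is to combine Theorem \ref{thm normal domain} with the computation of $\Pic A$ and $\Cl A$ carried out just before the proposition, and then handle the coherence of the complement of $V(\p)$ separately in cases (1) and (2). Since $\Pic A=0$ and $\Cl A\cong\Z$ via the map $\rho\colon[\p]\mapsto e_\p-d_\p$, the quotient $\Cl A/\Pic A\cong\Z$ is torsion-free. Theorem \ref{thm normal domain} then immediately gives: in case (3) (where $d_\p=e_\p$, so $\rho([\p])=0$) a classical localisation with support $V(\p)$ exists, while in cases (1) and (2) (where $\rho([\p])\ne0$) no universal localisation can have associated specialisation closed subset $V(\p)$. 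What remains is to show that the complement of $V(\p)$ is not coherent in case (1), and that it is coherent in case (2).

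For case (1), I would first identify the primes explicitly. If $d_\p=0$, then the irreducible Segre representative $f_\p\in k[S_0,S_1,T_0,T_1]$ is a linear form in $T_0,T_1$, and a direct check with the embedding $A\subseteq k[S_0,S_1,T_0,T_1]$ gives $\p=(g(X,V),g(Y,U))$ for some linear form $g$; symmetrically, if $e_\p=0$ then $\p=(g(X,Y),g(V,U))$. The automorphism group of $A$ contains $GL_2(k)\times GL_2(k)$ acting by row-column operations on $\bigl(\begin{smallmatrix}X&V\\Y&U\end{smallmatrix}\bigr)$ together with the transposition swapping $(d_\p,e_\p)$, and these act transitively on the set of case (1) primes, reducing the problem to $\p=(X,Y)$. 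For this $\p$ I would compute $H^2_\p(A)$ via the \v{C}ech complex $0\to A\to A_X\oplus A_Y\to A_{XY}\to 0$: using $XU=YV$ one gets $A_X\cong k[X^{\pm1},Y,V]$, $A_Y\cong k[Y^{\pm1},X,U]$ and $A_{XY}\cong k[X^{\pm1},Y^{\pm1},V]$, and a monomial-degree comparison shows that $1/(XY)\in A_{XY}$ does not lie in $A_X+A_Y$. Hence $H^2_\p(A)\ne0$ and Theorem \ref{flat epis} gives non-coherence of the complement.

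For case (2), coherence of the complement of $V(\p)$ is equivalent, by Theorem \ref{flat epis}, to $H^i_\p(A)=0$ for $i>1$, and I would treat $H^3$ and $H^2$ separately. The vanishing $H^3_\p(A)=0$ follows by flat base change (Corollary \ref{properties of H_V}) together with the Hartshorne-Lichtenbaum Vanishing Theorem \ref{HLVT'}: since $A$ is a finitely generated $k$-algebra and therefore a normal G-ring (Lemma \ref{lemma G-rings}), each local ring $A_\m$ is analytically irreducible (Lemma \ref{lemma analytic irred}), so the only codimension-zero prime $\q$ of $\widehat{A_\m}$ is $0$, and $\dim\widehat{A_\m}/\p\widehat{A_\m}>0$ since $\p\ne\m$. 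For $H^2_\p(A)=0$, the approach is to exhibit two elements whose radical is $\p$: assuming $d_\p<e_\p$ without loss of generality, set $f_1=S_0^{e_\p-d_\p}f_\p$ and $f_2=S_1^{e_\p-d_\p}f_\p$; both lie in $A$ because they have bidegree $(e_\p,e_\p)$, and one checks that $V(f_1,f_2)=V(\p)$ because the other potential components correspond to linear factors of $f_\p|_{S_0=0}$ or $f_\p|_{S_1=0}$, which are themselves contained in $V(\p)$. With $\mathrm{ara}(\p)\le2$, local cohomology is computed by $A\to A_{f_1}\oplus A_{f_2}\to A_{f_1f_2}$, and the syzygies in the relation module of $(f_1,f_2)$---such as $Yf_1=Xf_2$ and $Uf_1=Vf_2$ in the first nontrivial case---are then used to write every fraction $a/(f_1^mf_2^n)$ as a sum $b/f_1^k+c/f_2^l$.

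The main obstacle I anticipate is the explicit verification of $H^2_\p(A)=0$ in case (2): while the arithmetic rank bound and the HLVT dispose of $H^3$ uniformly, $H^2$ requires the combinatorial step of using the syzygies to write every element of $A_{f_1f_2}$ as an element of $A_{f_1}+A_{f_2}$, and this must be done in a way that works uniformly for all bidegrees $(d_\p,e_\p)$ with $d_\p,e_\p\ge1$ and $d_\p\ne e_\p$. If this direct route becomes unwieldy, an alternative is to pass to a small resolution $\pi\colon\widetilde{\Spec A}\to\Spec A$ of the conifold (blowing up either $(X,Y)$ or $(X,V)$), where the strict transform of $V(\p)$ becomes Cartier on a smooth threefold, and to deduce the cohomology vanishing on $\Spec A\setminus V(\p)$ from the corresponding statement upstairs using $R\pi_*\Ocal_{\widetilde{\Spec A}}=\Ocal_{\Spec A}$.
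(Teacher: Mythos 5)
Your treatment of case (3) and the negative (``not a universal localisation'') halves of cases (1) and (2) is correct and matches the paper: $\Pic A=0$, $\Cl A\cong\Z$ via $\rho$, and Theorem \ref{thm normal domain} does the rest. Your case (1) argument is valid and takes a genuinely different route from the paper's. The paper reduces modulo a suitable linear form to a two-dimensional ring $C=A/(Y)\cong k[X,U,V]/(XU)$ and applies HLVT there, whereas you reduce by the $GL_2\times GL_2$ action (plus the transpose) to the single prime $\p=(X,Y)$ and then compute $H^2_\p(A)\neq 0$ directly with the \v{C}ech complex $A\to A_X\oplus A_Y\to A_{XY}$, observing that $1/(XY)$ is not in $A_X+A_Y$ by inspecting the $V$-degree-zero piece of $A_{XY}\cong k[X^{\pm1},Y^{\pm1},V]$. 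Both are fine; your version buys independence from the quotient-ring bookkeeping at the cost of having to invoke (and implicitly justify) the transitivity of the automorphism group on the case (1) primes, which does hold over an algebraically closed field.

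The genuine gap is in the coherence claim of case (2), and you have already flagged it yourself. You correctly produce $f_1=S_0^{\delta_\p}f_\p$ and $f_2=S_1^{\delta_\p}f_\p$ in $A$ with $\sqrt{(f_1,f_2)}=\p$, which gives $\operatorname{ara}(\p)\le 2$ and hence $H^i_\p(A)=0$ for $i\ge 3$ (making the separate HLVT argument for $H^3$ redundant). But the decisive vanishing $H^2_\p(A)=0$ --- equivalently $A_{f_1f_2}=A_{f_1}+A_{f_2}$ --- is only sketched: the promised syzygy computation is not carried out, and it is not clear it closes uniformly in $(d_\p,e_\p)$. The fallback via a small resolution $\pi\colon\widetilde{\Spec A}\to\Spec A$ and $R\pi_*\mathcal{O}=\mathcal{O}$ is plausible but also not executed, and it would require checking that the strict transform of $V(\p)$ is Cartier and that the relevant cohomology on the complement upstairs vanishes. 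The paper closes this step by a different mechanism altogether: from $f_\p S_0^{\delta_\p},f_\p S_1^{\delta_\p}\in\p$ and $f_\p^{2\delta_\p-1}\in(S_0^{\delta_\p},S_1^{\delta_\p})$ it deduces $f_\p^{2\delta_\p}\in\p\cdot k[S_0,S_1,T_0,T_1]$, and then invokes the criterion of \cite[Proposition 6.11]{MS} to produce the flat epimorphism directly, bypassing any \v{C}ech or local-cohomology computation. As written, your proposal establishes cases (1) and (3) and the ``not universal'' direction of (2), but the ``has coherent complement'' direction of (2) is an unclosed step.
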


\begin{proof}
Since clearly $(d_\p,e_\p)\ne(0,0)$, exactly one of the three cases occurs.

In case (1), assume without loss of generality that $e_\p=0$. Then $f_\p$ (in the notation above) is an irreducible homogeneous polynomial in $k[S_0,S_1]$ with the standard grading and $\p = (f_\p\cdot k[S_0,S_1,T_0,T_1])\cap A$. In particular, $f_\p(X,Y)$ and $f_\p(V,U)$ belong to $\p$. Since $k$ is algebraically closed, $f_\p$ must be linear and one readily checks that $\p=(f_\p(X,Y),f_\p(V,U))$.

Now, we can without loss of generality assume that $S_1$ does not divide $f_\p$, i.e.\ $f_\p(S_0,0)\ne0$. As in Example~\ref{example non-coh: dimension 3}, we have $H^i_\p(A)=0$ for $i\ge 3$ by \cite[Theorem 3.3.1]{BS2} since $\p$ has two generators.
To prove that $V(\p)$ does not have coherent complement, we need to show that $H^2_\p(A)\ne 0$ (Theorem~\ref{flat epis}). Since $H^2_\p$ is right exact, it again suffices to prove by the independence of the base (Proposition~\ref{properties of H_I}) that $H^2_{\p C}(C)\ne 0$ for $C = A/(Y) \cong k[X,U,V]/(XU)$. Note that $\q=(U)\subseteq\widehat C$ satisfies $\dim \widehat C = \dim \widehat C/\q = 2$ and, since $\widehat{C}/(\p\widehat C+\q)\cong K[X,V]/I$ for an ideal $I$ containing $f_\p(X,0)$ and $f_\p(V,0)$, also $\dim \widehat{C}/(\p\widehat C+\q) = 0$. The non-vanishing $H^2_{\p C}(C)$ then follows from Theorem~\ref{HLVT'}.

In case (2), assume without loss of generality that $d_\p<e_\p$ and put $\delta_\p:=e_\p-d_\p=\rho([\p])$, where $\rho\colon\Chow{A}\overset{\cong}\longrightarrow\Z$ is as above. We can equip $k[S_0,S_1,T_0,T_1]$ with a $\Z$-grading such that $S_0,S_1$ are in degree $-1$ and $T_0,T_1$ are in degree $1$. This means that $A$ identifies with $k[S_0,S_1,T_0,T_1]_0$ and $f_\p\in k[S_0,S_1,T_0,T_1]_{\delta_\p}$. Since then $f_\p S_0^{\delta_\p}, f_\p  S_1^{\delta_\p} \in \p$ and $f_\p^{2\delta_\p-1}\in(S_0^{\delta_\p}, S_1^{\delta_\p})$, we have $f_\p^{2\delta_\p} \in \p \cdot k[S_0,S_1,T_0,T_1]$. It follows from (the proof of) \cite[Proposition 6.11]{MS} that there exists a flat epimorphism $A\longrightarrow B$ associated with $V(\p)$. On the other hand $[\p]\in\Chow{A}\cong \Z$ corresponds to the torsion-free element $0\ne\delta_\p$, so it cannot be a universal localisation by Theorem~\ref{thm normal domain}.

Finally, in case (3), $f_\p$ lies in the image of the embedding $A \longrightarrow k[S_0,S_1,T_0,T_1]$ and, hence $\p=f_\p A$ is principal. The classical localisation $A\longrightarrow A_{f_\p}$ is associated with $V(\p)$ in this case.
\end{proof}

%%%%%%%%%%%%%%%%%%%%%%%%%%%%%%%%%%%%%

\section*{Acknowledgments}
The authors are grateful to Kazuhiko Kurano and Shunsuke Takagi for valuable information on divisor class groups and elliptic curves, and to  Changchang Xi and Joseph Chuang for valuable discussions.

%%%%%%%%%%%%%%%%%%%%%%%%%%%%%%%%%%%%%%%%%%%%%%%%%%%%%%%%%%%%%%%%%

\end{document}